\newcommand\numberthis{\addtocounter{equation}{1}\tag{\theequation}}
\theoremstyle{plain}
\newtheorem{theorem}{Theorem}
\newtheorem{lemma}{Lemma}
\newtheorem{assumption}{Assumption}
\newtheorem{corollary}{Corollary}
\theoremstyle{definition}
\newtheorem{remark}{Remark}
\title{A Zeroth-Order Variance-Reduced Method for \\ Decentralized Stochastic Non-convex Optimization}
\author[1]{Hongxu Chen}
\author[2]{Jinchi Chen}
\author[1]{Ke Wei}
\affil[1]{School of Data Science, Fudan University, Shanghai, China.}
\affil[2]{School of Mathematics, East China University of Science and Technology, Shanghai, China.}
\begin{document}
\maketitle

\begin{abstract}
In this paper, we consider a distributed stochastic non-convex optimization problem, which is about minimizing a sum of $n$ local cost functions over a network with only zeroth-order information. 
A novel single-loop Decentralized Zeroth-Order Variance Reduction algorithm, called DZOVR, is proposed, which combines two-point gradient estimation, momentum-based variance reduction technique, and gradient tracking.
Under mild assumptions, we show that the algorithm is able to achieve $\mathcal{O}(dn^{-1}\epsilon^{-3})$ sampling complexity at each node to reach an $\epsilon$-accurate stationary point and also exhibits network-independent and linear speedup properties. 
To the best of our knowledge, this is the first stochastic decentralized zeroth-order algorithm that achieves this sampling complexity.
Numerical experiments demonstrate that DZOVR outperforms the other state-of-the-art algorithms and has network-independent and linear speedup properties.
\end{abstract}

\section{Introduction}
Distributed optimization plays an important role in multi-agent control and has been applied in diverse domains, including network sensing \cite{yang2019}, power systems \cite{guo2016,molzahn2017,wang2017}, and multi-agent reinforcement learning \cite{chen2022,zhang2021multi,zhang2018fully}.
It has received intensive investigations recently due to the challenges in tackling large-scale computing problems.
In this paper, we focus on the decentralized setting over a network.
More precisely, let $\mathcal{G}=(\mathcal{V},\mathcal{E})$ be an undirected graph where $\mathcal{V}=\{1,\dots,n\}$ is the set of nodes and $\mathcal{E}$ is the collection of edges. 
If node $i$ can communicate with node $j$, then $(i,j)\in\mathcal{E}$.
The neighbor of node $i$ is defined by $\mathcal{N}(i) = \{j \in \mathcal{N} \mid(i, j) \in \mathcal{E} \text { or } i=j\}$.
The problem can be expressed as 
\begin{equation}
    \label{eq_problem}
    \min_{x\in \mathbb{R}^d} f(x) := \frac{1}{n} \sum_{i=1}^n f_i (x),
\end{equation}
where $f_i$ is the local function of node $i$ and each node can only communicate with its neighbors.

In the past decade, many effective first-order methods have been proposed to solve problem~\eqref{eq_problem}. 
Decentralized gradient descent method (DGD) \cite{nedic2009} is a direct extension of gradient descent (GD), where each node minimizes its own objective function using GD and conducts consensus through communication. 
The technique of gradient tracking has been introduced in \cite{di2016,qu2017,scutari2019,pu2021} in order for the algorithms to achieve a convergence rate that is comparable to the centralized setting without the assumption of bounded dissimilarity.
For the stochastic non-convex problems, the convergence rate $\mathcal{O}(1/\sqrt{nK})$ of the method with gradient tracking has been established in \cite{xin2021}.
There is another line of research which develops the algorithms for problem~\eqref{eq_problem} by reformulating it as a linear constrained problem over the network \cite{liang2019,xu2018,yi2021,yi2022primal}.
When $f_i$ is specified as the expected loss function, gradient descent is usually replaced by stochastic gradient descent to minimize the local function. 
In this scenario, a variety of variance reduction techniques can be used to improve the convergence of the algorithms.
For instance, D-GET in \cite{sun2020} improves sample and communication complexity through variance reduction and gradient tracking. 
SPIDER-SFO \cite{fang2018} in the centralized setting has been extended to the decentralized scenario in \cite{pan2020}.
A single-loop distributed variance reduction method, which reduces the oracle queries per iteration and achieves linear speedup, is introduced in \cite{xin2021hybrid}.
The convergence rates of the methods in \cite{sun2020} and \cite{pan2020} are both $\mathcal{O}(1/(K)^{2/3})$, while the method in \cite{xin2021hybrid} achieves a faster convergence rate of $\mathcal{O}(1/(nK)^{2/3})$.
It's beyond the scope of this paper to give an exhaustive literature review on this topic. We refer interested readers to the general framework in \cite{xin2020general} and references therein for more details. 

However, in many practical scenarios, gradient information is not available, and we can only have access to function values, such as black-box models \cite{papernot2017,chen2017zoo,ilyas2018}. 
To handle this problem, gradient is approximated through random sampling and finite differences in zeroth-order optimization.
The fundamental properties of 2-point estimator is investigated in \cite{nesterov2017} and the convergence rate of the algorithm based on the 2-point estimator is established under convex setting, laying the foundation for a set of subsequent works.
Further convergence analysis has been conducted in \cite{ghadimi2013} for the stochastic non-convex objective functions. 
The 2-point estimator approximates the gradient by taking difference in only one direction, resulting in a high variance.
To reduce the variance, many variance reduction methods and mini-batch sampling are used in the algorithm development \cite{liu2018svrg,liu2018,liu2018vr}. 
However, mini-batch sampling requires more queries in each iteration and is less efficient compared to the dimension-dependent deterministic methods when the number of samples is large.

For the distributed problem~\eqref{eq_problem}, it is also natural to develop zeroth-order optimization methods when the gradient information is missing.
A distributed zeroth-order algorithm to solve the non-convex problem is proposed in \cite{hajinezhad2019} based on the augmented Lagrangian function. 
Two algorithms using 2-point estimator and 2$d$-point estimator are proposed in \cite{tang2020}, one of which utilizes the technique of gradient tracking. 
The convergence rates of the two algorithms, in the deterministic setting, have been shown to be $\mathcal{O}(\sqrt{d} \log K/\sqrt{K})$ and $\mathcal{O}(1/K)$, respectively.
In \cite{mhanna2022}, an algorithm which combines one-point estimate and gradient tracking is developed, achieving a convergence rate of $\mathcal{O}(1/\sqrt{K})$ in the strongly convex setting when the diminishing step size is used.
It is observed in \cite{maritan2023} that the second-order information can be utilized by the addition of just one extra point to the 2$d$-point estimator and the linear convergence rate has been established under strong convexity.
In the non-convex setting, several algorithms based on the stochastic coordinate methods \cite{zhang2021,zhang2022} and the primal-dual approaches \cite{zhang2021,yi2022} are introduced, and the $\mathcal{O}(\sqrt{d}/\sqrt{nK})$ convergence rate has been established.
Compared to first-order methods, zeroth-order methods have high variance due to random sampling for the approximation of gradient. 
As mentioned above, many variance reduction methods have been developed for the distributed first-order methods. 
However, in the field of zeroth-order distributed optimization where variance reduction is even more crucial, relevant research remains unexplored, to the best of our knowledge.
The goal of this paper is to design a distributed zeroth-order method with variance reduction to mitigate the impact of variance and achieve faster convergence.

\begin{table}[h]
\label{table}
\caption{Convergence results of related distributed zeroth-order methods.}
\vspace{10pt}
\centering
\renewcommand\arraystretch{1.5}
\begin{threeparttable}
\begin{tabular}{cccccc}
    \hline
    Method & Nonconvex & Stochastic & \makecell[c]{Bounded \\ Dissimilarity\tnote{1}} & \makecell[c]{Convergence \\Rate} &  \makecell[c]{Sampling \\Complexity\tnote{2}}\\
    \hline
    ZONE \cite{hajinezhad2019} & \Checkmark & \Checkmark & Strong & $\mathcal{O}(\frac{1}{K})$ & $\mathcal{O}(K\epsilon^{-2})$\\
    2-point DGD\cite{tang2020} & \Checkmark & \XSolidBrush & Strong & $\mathcal{O}(\frac{\sqrt{d} \log K}{\sqrt{K}})$ &  $\mathcal{O}(d\epsilon^{-4})$\\
    2d-point DGT \cite{tang2020} & \Checkmark & \XSolidBrush & No & $\mathcal{O}(\frac{1}{K})$ &  $\mathcal{O}(d\epsilon^{-2})$\\
    1P-DSGT \cite{mhanna2022} & \XSolidBrush & \Checkmark & Strong & $\mathcal{O}(\frac{1}{\sqrt{K}})$ &  $\mathcal{O}(\epsilon^{-4})$ \\
    ZO-JADE \cite{maritan2023} & \XSolidBrush & \XSolidBrush & No & $\mathcal{O}\left( {(1-c \gamma_r)^K} \right)$ & $\mathcal{O}(d\log\left(\frac{1}{\epsilon}\right))$\\
    ZODIAC \cite{zhang2021} & \Checkmark & \Checkmark & Yes & $\mathcal{O}(\frac{\sqrt{d}}{\sqrt{K}})$ & $\mathcal{O}(d\epsilon^{-4})$\\
    ZOOM \cite{zhang2022} & \Checkmark & \Checkmark & Yes & $\mathcal{O}(\frac{\sqrt{d}}{\sqrt{nK}})$ & $\mathcal{O}(dn^{-1}\epsilon^{-4})$\\
    ZODPA \cite{yi2022} & \Checkmark & \Checkmark & Weak & $\mathcal{O}(\frac{\sqrt{d}}{\sqrt{nK}})$ & $\mathcal{O}(dn^{-1}\epsilon^{-4})$\\
    ZODPDA \cite{yi2022} & \Checkmark & \Checkmark & Weak & $\mathcal{O}(\frac{\sqrt{d}}{\sqrt{nK}})$ & $\mathcal{O}(dn^{-1}\epsilon^{-4})$\\
    \textbf{DZOVR} & \Checkmark & \Checkmark & Weak & $\mathcal{O}((\frac{d}{nK})^{2/3})$ & $\mathcal{O}(dn^{-1}\epsilon^{-3})$\\
    \hline
\end{tabular} 
\begin{tablenotes}
\footnotesize
\item[1] For bounded dissimilarity, ``Strong," ``Yes," and ``Weak" respectively represent $f_i$ being Lipschitz, the standard bounded dissimilarity assumption (i.e., $\| \nabla f_i(x) - \nabla f(x)\| \le C$), and Assumption \ref{ass4}.
\item[2] The sampling complexity refers to the number of queries required for each node to reach an $\epsilon$-stationary point, i.e., $\| \nabla f(x)\| \le \epsilon$.
\end{tablenotes}
\end{threeparttable}
\end{table} 

\subsection{Contributions}
The main contributions of this paper are as follows:
\begin{itemize}
    \item A distributed zeroth-order method called DZOVR is proposed. This method combines a momentum-based technique with gradient tracking, which effectively reduces the variance of the 2-point estimator and thus can achieve faster convergence. 
    Further numerical experiments demonstrate that the proposed algorithm outperforms the other state-of-the-art methods.
    \item We prove that DZOVR converges converges to at a rate of $\mathcal{O}((d/nK)^{2/3})$ under certain conditions, or equivalently, an $\epsilon$-accurate stationary point can be reached under the $\mathcal{O}(dn^{-1}\epsilon^{-3})$ sampling complexity. 
    To the best of our knowledge, this is the first result achieving this sampling complexity in the decentralized zeroth-order stochastic non-convex optimization.
    It is worth noting that the sampling complexity only looses a $d$ factor compared to the best complexity that is achievable for the distributed first-order methods. 
    We have summarised the convergence results of related distributed zeroth-order methods in Table \ref{table}.
\end{itemize}

\subsection{Notation}
We use $\|\cdot\|$ to denote the Euclidean norm of a vector or the spectral norm of a matrix.
    The closed unit ball in $\mathbb{R}^d$ is denoted by $\mathbb{B}^d := \{ x\in \mathbb{R}^d: \| x \| \le 1 \}$ and the unit sphere is denoted by $\mathbb{S}_{d-1} := \{ x\in \mathbb{R}^d: \| x \| = 1 \}$. 
    We let $\mathcal{U}(\mathbb{R}^d)$ and $\mathcal{U}(\mathbb{S}_{d-1})$ denote the uniform distributions over $\mathbb{R}^d$ and $\mathbb{S}_{d-1}$, respectively.
    Suppose $A \in \mathbb{R}^{p \times q}$ and $B \in \mathbb{R}^{r \times s}$. Then the Kronecker product, denoted $\otimes$, is given by
    $$
    A \otimes B=\left[\begin{array}{ccc}
    a_{11} B & \cdots & a_{1 q} B \\
    \vdots & \ddots & \vdots \\
    a_{p 1} B & \cdots & a_{p q} B
    \end{array}\right] \in \mathbb{R}^{p r \times q s}.
    $$
Given a network $\mathcal{G}=(\mathcal{V},\mathcal{E})$, the corresponding consensus matrix is denoted $W$, where $w_{ij} > 0$ if $(i, j) \in \mathcal{E}$ or $i = j$, and $w_{ij} = 0$ otherwise.
    Letting $\xi^k = \{\xi_1^k, \xi_2^k, \dots, \xi_n^k\}$ be a set of independent random variables and $u^k = \{u_1^k, u_2^k, \dots, u_n^k\}$ be a set of random vectors,
    we define $\mathcal{F}_k$ as the $\sigma$-algebra generated by $\{ \xi^0, u^0, \xi^1, u^1, \dots, \xi^{k-1}, u^{k-1} \}$.

\subsection{Outline}
The rest of the paper is organized as follows.
In Section \ref{sec2}, we introduce the DZOVR algorithm and analyze its convergence.
In Section \ref{sec3}, we validate through numerical experiments that the algorithm achieves the state-of-the-art performance with linear speedup and network-independent properties.
The paper is concluded in the Section \ref{sec4}.

\section{Main Results}
\label{sec2}
\subsection{Preliminaries}
\noindent\textbf{Zeroth-order estimators.}
Given a function $f: \mathbb{R}^d \to \mathbb{R}$, according to the definition of gradient, the most natural way to estimate it is by finite differences,
$$
\hat{\nabla} f(x)_{(2 d)}:=\sum_{i=1}^d \frac{f\left(x+t e_i\right)-f(x)}{t} e_i,
$$
where $e_i \in \mathbb{R}^d$ is the $i$-th unit vector, and $t>0$ is a given constant.
Although the estimation error can be arbitrarily small, the 2$d$-point estimator requires $2d$ queries in every iteration, leading to high computational cost.
A way to deal with the problem is using random sampling, which yields the \textit{$2$-point estimator}:
$$\hat{\nabla} f(x; u):=d \; \frac{f(x+t u)-f(x- tu)}{2t} u,$$
where $u \sim \mathcal{U}(\mathbb{S}_{d-1})$ is a random perturbation.

While making the queries of estimator dimension-independent, the random estimator introduces high variance, which is in the order of $\mathcal{O}(d \| \nabla f(x) \|^2)$. 
In the centralized case, the gradient converges to zero, so does the gradient estimation variance.
However, in the distributed case, the gradients of some nodes may not converge to zero, leading to persistent high estimation variance, which can impede the convergence of the algorithm \cite{tang2020}.

\noindent\textbf{Gradient tracking.}
Decentralized gradient descent (DGD) is a simple and effective distributed optimization algorithm that can be written in the following form:
$$x^{k+1}_i = \sum_{j=1}^n w_{ij} \left( x_j^k - \alpha_k \nabla f_j(x_j^k)\right),$$
where $\alpha_k$ is the step size at the $k$-th iteration.
It is worth noting that stationary point for problem \eqref{eq_problem} is $\nabla f(x) = \frac{1}{n}\sum_{i=1}^n \nabla f_i(x) = 0$, but the gradient at each node is not necessarily equal to zero. 
Therefore, to ensure the convergence of DGD, bounded dissimilarity or diminishing step size is required.

The gradient tracking technique \cite{scutari2019,pu2021} addresses this issue by additional communication of gradients, which allows gradient consensus.
The update procedure is as follows:
\begin{align*}
g_i^{k+1} & =\sum_{j=1}^n w_{i j}\left(g_j^k+ \nabla f_j(x_j^k)-\nabla f_j(x_j^{k-1})\right), \\
x_i^{k+1} & =\sum_{j=1}^n w_{i j}\left(x_j^k-\alpha_k g_j^{k+1}\right).
\end{align*}
Under Assumption \ref{ass1}, gradient tracking possesses a crucial property that we will be used frequently in the sequel, that is,
$$\frac{1}{n} \sum_{i=1}^n g_i^{k+1} = \frac{1}{n} \sum_{i=1}^n \nabla f_i(x_i^k).$$

\noindent\textbf{Variance reduction.} 
In the stochastic setting, due to the impact of variance, first-order methods like stochastic gradient descent can only take small stepsizes, leading to slow convergence rates. 
Many variance reduction methods have been proposed to improve the convergence rate of stochastic gradient descent in recent years, such as SVRG \cite{johnson2013}, SARAH \cite{nguyen2017}, SPIDER \cite{fang2018} and STORM \cite{cutkosky2019}.
Among them, many are double-loop algorithms that require a large batch size to estimate gradients, potentially posing practical challenges in real-world applications.
Therefore, we consider the single-loop momentum-based variance reduction method proposed in \cite{cutkosky2019}, which is in the form of:
\begin{equation*}
    \label{eq_mo}
m^{k+1} = \beta \nabla f(x^k; \xi^k) + (1-\beta) \left( m^k +\nabla f(x^k; \xi^k) - \nabla f(x^{k-1}; \xi^{k-1}) \right),
\end{equation*}
where $\nabla f(x^k; \xi^k)$ is the stochastic gradient estimator. 
This approach can also be viewed as a convex combination of the SGD and SARAH methods.

\subsection{Algorithm}
In the stochastic setting, problem \eqref{eq_problem} has the following form:
\begin{equation}
    \label{eq_sto_problem}
    \min_{x\in \mathbb{R}^d} f(x) = \frac{1}{n} \sum_{i=1}^n f_i (x) := \frac{1}{n} \sum_{i=1}^n \mathbb{E}_{\xi_i} F_i(x; \xi_i),
\end{equation}
where $\xi_i$ is a random variable and $f_i(x) = \mathbb{E}_{\xi_i} F_i(x; \xi_i)$.
We cannot directly obtain the value of the local function. 
But can only obtain an approximation through sampling.
Therefore, the zeroth-order gradient estimation involves two sources of randomness: one from the inherent randomness of the problem itself, and the other from the sampling of directions in zeroth-order gradient estimation, as mentioned in Section 2.1.
In the stochastic setting, the zeroth-order gradient estimation of local function $f_i$ is as follows:
\begin{equation}
\label{eq zero order gradient estimator}
    \hat{\nabla} f_i (x;u_i,\xi_i) := d \; \dfrac{F_i\left(x+t u_i; \xi_i\right)-F_i\left(x-t u_i; \xi_i\right)}{2 t}\; u_i.
\end{equation}

The detailed DZOVR is presented in Algorithm \ref{algo}.
It can be seen that each iteration requires 4 queries, except for the initial iteration of the algorithm.
In other words, the sampling complexity per iteration of the algorithm is roughly $\mathcal{O}(1)$, which is important in zeroth-order algorithms.

\begin{algorithm}[t]
    \label{algo}
        \caption{Decentralized Zeroth-Order Variance Reduced method (DZOVR)}
        \KwIn{$W$; $x_i^0, g_i^0 = m_i^{-1}= 0 \; (i=1,\dots,n)$; $b_0$; $\alpha > 0$; $0 \le \beta < 1$; positive sequence $\{t_k\}_{k=0}^{\infty}$.}
        \For{$i=1,\dots,n$ in parallel}{
            Sample $\{u^0_{i,s}, \xi^0_{i,s}\}_{s=1}^{b_0}$ independently;\\
            $\hat{\nabla} f_i (x_i^0;u_{i,s}^0,\xi_{i,s}^0)=d \; \dfrac{F_i\left(x_i^0+t_0 u_{i,s}^0; \xi_{i,s}^0\right)-F_i\left(x_i^0-t_0 u_{i,s}^0; \xi_{i,s}^0\right)}{2 t_0}\; u_{i,s}^0$;\\
            $m_i^0 = \frac{1}{b_0} \sum\limits_{s=1}^{b_0} \hat{\nabla} f_i\left(x_i^0 ; u_{i, s}^0, \xi_{i, s}^0\right)$;\\
            $g_i^{1}=\sum\limits_{j=1}^n w_{i j}\left(g_j^0+m_j^0-m_j^{-1}\right)$;\\
            $x_i^{1}=\sum\limits_{j=1}^n w_{i j}\left(x_j^0-\alpha g_j^{1}\right)$;\\
        }
        \For{$k=1,\dots$}{
            \For{$i=1,\dots,n$ in parallel}{
                Sample $\xi_i^k$ independently;\\
                Sample $u_i^k \sim \mathcal{U}(\mathbb{S}_{d-1})$ independently;\\
                $\hat{\nabla} f_i (x_i^k;u_i^k,\xi_i^k)=d \; \dfrac{F_i\left(x_i^k+t_k u_i^k; \xi_i^k\right)-F_i\left(x_i^k-t_k u_i^k; \xi_i^k\right)}{2 t_k}\; u_i^k$;\\
                $\hat{\nabla} f_i (x_i^{k-1};u_i^k,\xi_i^k)=d \; \dfrac{F_i\left(x_i^{k-1}+t_k u_i^k; \xi_i^k\right)-F_i\left(x_i^{k-1}-t_k u_i^k; \xi_i^k\right)}{2 t_k}\; u_i^k$;\\
                $m_i^k=\beta \hat{\nabla} f_i\left(x_i^k ; u_i^k, \xi_i^k\right)+(1-\beta)\left(m_i^{k-1}+\hat{\nabla} f_i\left(x_i^k ; u_i^k, \xi_i^k\right)-\hat{\nabla} f_i\left(x_i^{k-1} ; u_i^k, \xi_i^k\right)\right)$;\\
                $g_i^{k+1}=\sum\limits_{j=1}^n w_{i j}\left(g_j^k+m_j^k-m_j^{k-1}\right)$;\\
                $x_i^{k+1}=\sum\limits_{j=1}^n w_{i j}\left(x_j^k-\alpha g_j^{k+1}\right)$.
            }
        }
\end{algorithm}
Let $\{x_i^k, g_i^k, m_i^k, \hat{\nabla} f_i\left(x_i^k ; u_i^k, \xi_i^k\right)\}$ be the sequences generated by Algorithm \ref{algo}. For ease of notation, define
\begin{align}
\label{notation}
x^k=\begin{bmatrix}
    x_1^k \\
    x_2^k \\
    \vdots \\
    x_n^k\\
    \end{bmatrix}, ~g^k=\begin{bmatrix}
    g_1^k \\
    g_2^k \\
    \vdots \\
    g_n^k\\
    \end{bmatrix},~m^k=\begin{bmatrix}
    m_1^k \\
    m_2^k \\
    \vdots \\
    m_n^k\\
    \end{bmatrix},~\hat{\nabla} f\left(x^k ; u^k, \xi^k\right) = \begin{bmatrix}
    \hat{\nabla} f_1\left(x_1^k ; u_1^k, \xi_1^k\right) \\
    \hat{\nabla} f_2\left(x_2^k ; u_2^k, \xi_2^k\right) \\
    \vdots \\
    \hat{\nabla} f_n\left(x_n^k ; u_n^k, \xi_n^k\right) \\
    \end{bmatrix}\in\mathbb{R}^{nd}.
\end{align}
Recalling that $W$ is the consensus matrix, the update in Algorithm \ref{algo} can be rewritten as
\begin{align*}
m^k & =\beta \hat{\nabla} f\left(x^k ; u^k, \xi^k\right)+(1-\beta)\left(m^{k-1}+\hat{\nabla} f\left(x^k ; u^k, \xi^k\right)-\hat{\nabla} f\left(x^{k-1} ; u^k, \xi^k\right)\right), \\
g^{k+1} & =(W \otimes I_d)\left(g^k+m^k-m^{k-1}\right), \\
x^{k+1} & =(W \otimes I_d)\left(x^k-\alpha g^{k+1}\right).
\end{align*}

\subsection{Convergence Analysis}
Before establishing the convergence of DZOVR, we first introduce the following standard assumptions.
\begin{assumption}
\label{ass1}
    The consensus matrix $W$ is doubly stochastic and primitive, that is, $W1_n = 1_n$, $1_n^TW = 1_n^T$ and there exists a positive integer $k$ such that $W^k > 0$.
\end{assumption}

\begin{assumption}
\label{ass2}
    Each local function $f_i$ is $L$-smooth and $F_i\left(\cdot \; ;\xi_i\right)$ is L-smooth for almost all $\xi_i$.
\end{assumption}

\begin{assumption}
\label{ass3}
    For any $x\in \mathbb{R}^d$, the stochastic gradient $\nabla F_i\left(x ;\xi_i\right)$ satisfies
    \begin{align*}
        & \mathbb{E}_{\xi_i} \left[ \nabla F_i\left(x ;\xi_i\right) \right]  = \nabla f_i(x); \\
        & \mathbb{E}_{\xi_i} \left[ \Vert \nabla F_i\left(x ;\xi_i\right) - \nabla f_i(x) \Vert^2 \right] \le \sigma_0^2\left\|\nabla f_i(x)\right\|^2+\sigma_1^2.
    \end{align*}
\end{assumption}

\begin{assumption}
\label{ass4}
    For any $x\in \mathbb{R}^d$, there exists two constants $\sigma_2$ and $\sigma_3$ such that
    $$
    \left\|\nabla f_i(x)-\nabla f(x)\right\|^2 \leq \sigma_2^2\|\nabla f(x)\|^2+\sigma_3^2.
    $$
\end{assumption}

\begin{remark}
    Under Assumption \ref{ass1}, it can be shown that \cite{tang2020}
    $$
    \rho:=\left\| W -\frac{1_n 1_n^T}{n}\right\| \in[0,1).
    $$
    Assumption \ref{ass2}, as well as the first term in Assumption \ref{ass3}, are standard assumptions in the context of stochastic optimization \cite{xin2021,gao2023}. 
    It is worth noting that the smoothness of $F_i(\cdot,\xi)$ in Assumption \ref{ass2} is essential for variance reduction methods and standard for zeroth-order optimization \cite{yi2022,zhang2022}.
    The second term of Assumption \ref{ass3} is weaker than the standard assumption of bounded variance \cite{yi2022}. 
    When $\sigma_0 = 0$, it reduces to the assumption of bounded variance.
    Assumption \ref{ass4} concerns the dissimilarity property. 
    Compared to the standard bounded dissimilarity assumption, i.e., $
    \left\|\nabla f_i(x)-\nabla f(x)\right\|^2 \leq \tilde{\sigma}_3^2
    $, it is weaker and can be satisfied by functions such as quadratic functions \cite{yi2022}.
\end{remark}

We are now in the position to present our main results, whose proof is deferred to Appendix \ref{appendix_b}.
\begin{theorem}
    \label{thm}
Suppose Assumption 1-4 holds. Let the constant step size $\alpha$ and momentum parameter $\beta$ obey that
\small{
\begin{align*}
\alpha \le \min\left\{1, \frac{1}{2L},  \frac{1-\rho^2}{\sqrt{360d} L \rho^2}, \frac{\left(1-\rho^2\right)^2}{284 \sqrt{d} L \rho^2}, \frac{1}{2\sqrt{c_3}}, \frac{L d}{2\sqrt{n c_0 c_3}}, \frac{1}{4 \sqrt{ c_1 c_3}}, \sqrt{\frac{c_4}{4 c_3}}\right\},\quad 4c_2\leq \beta \leq \min \left\{1, \frac{L^2 d^2}{n c_0}, \frac{1}{4 c_1}, c_4\right\},
\end{align*}}where $\{c_i\} (i=0,\dots,4)$ are absolute constants given in Appendix \ref{appendix_b}. Provided that $t_0 \le \frac{\beta}{d^2}, \sum\limits_{k=0}^K t_k^2 \le \frac{\beta^2 M_t}{d^4}$, then one has
\begin{align*}
\frac{1}{K} \sum_{k=0}^{K-1} \mathbb{E}\left\|\nabla f\left(\bar{x}^k\right)\right\|^2 \leq & \frac{4\left(f\left(\bar{x}^0\right)-f^*\right)}{\alpha K}+ \frac{\left(192 d\left(1+\sigma_0^2\right)+48\right)\left\|\nabla f\left(x^0\right)\right\|^2}{n b_0 \beta K}+\frac{192 d \sigma_1^2}{b_0 \beta K}\\
& +\left(\frac{24 d^2 L^2}{b_0 \beta K}+\frac{48 L^2}{b_0 \beta K}+\frac{16 L^2}{\beta K}\right) t_0^2 +\frac{18560 \alpha^2 d L^2 \rho^2}{K \beta n\left(1-\rho^2\right)^3} e_g +\frac{335000 \alpha^2 d L^2 \rho^4}{K n\left(1-\rho^2\right)^4} e_m \\
& +\frac{1}{K}\left(208 L^2 + \frac{5800000 \alpha^2 \beta L^4 \rho^4}{\left(1-\rho^2\right)^4}\right) M_t \\
& +\left(\frac{256 \beta d}{n}+\frac{1044000 \alpha^2 \beta d^2 L^2 \rho^4}{\left(1-\rho^2\right)^3}+\frac{12064000 \alpha^2 \beta^2 d^2 L^2 \rho^4}{\left(1-\rho^2\right)^4}\right)\left(1+\sigma_0^2\right) \sigma_3^2 \\
& +\left(\frac{64 \beta d}{n}+\frac{232000 \alpha^2 \beta d^2 L^2 \rho^4}{\left(1-\rho^2\right)^3}+\frac{2726000 \alpha^2 \beta^2 d^2 L^2 \rho^4}{\left(1-\rho^2\right)^4}\right) \sigma_1^2,
\end{align*}
where
\begin{align*}
& \bar{x}^k = \frac{1}{n}\sum_{i=1}^n x_i^k,\\
& e_m = \frac{24 d\left(1+\sigma_0^2\right)+6}{b_0}\left\|\nabla f\left(x^0\right)\right\|^2+\frac{24 n d \sigma_1^2}{b_0} +\frac{3 n d^2 L^2 }{b_0} t_0^2+\frac{6 n L^2}{b_0} t_0^2 +2 n L^2 t_0^2,\\
& e_g =2 \rho^2 e_m+2 \rho^2\left\|\nabla f\left(x^0\right)\right\|^2.
\end{align*}
\end{theorem}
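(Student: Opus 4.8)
The plan is to build a single iteration potential (Lyapunov) inequality that jointly controls four quantities—the objective gap $f(\bar x^k)-f^*$, the iterate consensus error $\|x^k-1_n\otimes\bar x^k\|^2$, the gradient-tracking error $\|g^k-1_n\otimes\bar g^k\|^2$, and the variance of the averaged momentum estimator—and then telescope it. First I would record the elementary properties of the two-point zeroth-order estimator under Assumption \ref{ass2}: its expectation over $u_i$ is the gradient of a smoothed surrogate of $f_i$, so its bias relative to $\nabla f_i(x)$ is of order $t\,dL$ (hence $t^2d^2L^2$ after squaring), and, combined with Assumption \ref{ass3}, its conditional second moment is bounded by $\mathcal{O}\!\left(d\|\nabla f_i(x)\|^2+d\sigma_1^2+t^2d^2L^2\right)$. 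These two estimates are the source of every factor of $d$ and every $t_k^2$ appearing in the final bound, as well as of the $(1+\sigma_0^2)$ prefactors.

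Next, using the gradient-tracking identity $\bar g^{k+1}=\bar m^k$ (which follows from double stochasticity and the zero initialization), the averaged iterate obeys $\bar x^{k+1}=\bar x^k-\alpha\bar m^k$. The descent lemma for the $L$-smooth $f$ then yields
$$
\mathbb{E} f(\bar x^{k+1}) \le \mathbb{E} f(\bar x^k) - \frac{\alpha}{2}\mathbb{E}\|\nabla f(\bar x^k)\|^2 - \frac{\alpha}{2}(1-L\alpha)\mathbb{E}\|\bar m^k\|^2 + \frac{\alpha}{2}\mathbb{E}\|\bar m^k-\nabla f(\bar x^k)\|^2 .
$$
The last error term splits as $\|\bar m^k-\nabla f(\bar x^k)\|^2\le 2\|\bar m^k-h^k\|^2+2\|h^k-\nabla f(\bar x^k)\|^2$ with $h^k:=\frac1n\sum_i\nabla f_i(x_i^k)$, and the second piece is bounded by $\frac{2L^2}{n}\sum_i\|x_i^k-\bar x^k\|^2$, i.e.\ by the consensus error.

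I would then develop three coupled recursions. Writing $\rho=\|W-\frac{1_n1_n^T}{n}\|<1$, the iterate consensus error contracts as $\mathbb{E}\|x^{k+1}-1_n\otimes\bar x^{k+1}\|^2\le\rho^2(1+\eta)\,\mathbb{E}\|x^k-1_n\otimes\bar x^k\|^2+\mathcal{O}(\alpha^2/\eta)\,\mathbb{E}\|g^{k+1}-1_n\otimes\bar g^{k+1}\|^2$; the tracking error satisfies a similar $\rho^2$-contraction driven by the momentum increment, $\mathbb{E}\|g^{k+1}-1_n\otimes\bar g^{k+1}\|^2\le\rho^2(1+\eta')\,\mathbb{E}\|g^k-1_n\otimes\bar g^k\|^2+\mathcal{O}(1/\eta')\,\mathbb{E}\|m^k-m^{k-1}\|^2$; and the STORM-type variance recursion reads
$$
\mathbb{E}\|\bar m^k-h^k\|^2 \le (1-\beta)^2\mathbb{E}\|\bar m^{k-1}-h^{k-1}\|^2 + \frac{\beta^2 d}{n}\mathcal{O}(\sigma_1^2+\sigma_3^2) + \frac{(1-\beta)^2 d^2L^2}{n}\mathbb{E}\|x^k-x^{k-1}\|^2 + \mathcal{O}(d^2L^2 t_k^2).
$$
The decisive structural point is that the $(1-\beta)^2$ contraction together with the $\beta^2/n$-weighted variance (the $1/n$ is what produces linear speedup) and the increment term—which is itself $\propto\alpha^2\|\bar m^{k-1}\|^2$ plus consensus—keeps the accumulated variance at order $\beta$ per step without any growing batch, yielding the $\epsilon^{-3}$ rate.

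Finally I would assemble the weighted potential
$$
\Phi^k = \mathbb{E} f(\bar x^k) + a\,\mathbb{E}\|x^k-1_n\otimes\bar x^k\|^2 + b\,\mathbb{E}\|g^k-1_n\otimes\bar g^k\|^2 + c\,\mathbb{E}\|\bar m^{k-1}-h^{k-1}\|^2
$$
and choose $a,b,c$ together with the step-size and momentum thresholds so that all cross terms cancel—the increment $\|x^k-x^{k-1}\|^2$ being absorbed partly by the negative $\|\bar m^k\|^2$ from the descent step and partly by the consensus/tracking slack—leaving $\Phi^{k+1}\le\Phi^k-\frac{\alpha}{4}\mathbb{E}\|\nabla f(\bar x^k)\|^2+(\text{summable }t_k^2\text{ and }\beta\text{-variance terms})$. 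Telescoping from $0$ to $K-1$, dividing by $\alpha K$, and inserting the initial-batch quantities $e_m,e_g$ together with the hypotheses $t_0\le\beta/d^2$ and $\sum_k t_k^2\le\beta^2 M_t/d^4$ gives the stated bound. The main obstacle is the simultaneous calibration of all constants: because the four recursions are mutually coupled (through $\|x^k-x^{k-1}\|^2$, $\|m^k-m^{k-1}\|^2$, and $\bar g^k=\bar m^{k-1}$), closing the loop forces precisely the restrictive step-size conditions with the $(1-\rho^2)$ powers and $\sqrt d$ factors, and carrying the explicit absolute constants $c_0,\dots,c_4$ through these interlocking inequalities is where essentially all of the technical effort resides.
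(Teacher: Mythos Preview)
Your overall strategy is sound and hits all the essential ingredients (descent lemma on $\bar x^k$, gradient-tracking identity $\bar g^{k+1}=\bar m^k$, STORM-type variance recursion with $\beta^2 d/n$ noise, and the two $\rho^2$-contractions for consensus and tracking), but the organizing device differs from the paper. You build a single weighted Lyapunov potential $\Phi^k$ and look for a one-step decrease. The paper instead \emph{telescopes the descent inequality first} to obtain a bound involving $\sum_k\mathbb{E}\|\bar m^k-\nabla\bar f^k\|^2$ and $\sum_k\mathbb{E}\|x^k-1_n\otimes\bar x^k\|^2$, and only then substitutes \emph{summed} bounds for these quantities. Those summed bounds are produced by applying an elementary geometric-series lemma (if $V_k\le qV_{k-1}+R_{k-1}+C$ then $\sum V_k\le\frac{V_0}{1-q}+\frac{1}{1-q}\sum R_k+\frac{CK}{1-q}$) to the per-step recursions for $\|\bar m^k-\nabla\bar f^k\|^2$, $\|m^k-\nabla f(x^k)\|^2$, $\|g^{k+1}-1_n\otimes\bar g^{k+1}\|^2$, and $\|x^{k+1}-1_n\otimes\bar x^{k+1}\|^2$, cascading them into one another. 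This sum-then-substitute route is more modular and, in particular, absorbs painlessly the one-step time lags you would otherwise have to confront in $\Phi^k$ (the tracking recursion at $k{+}1$ feeds back $\|x^{k-1}-1_n\otimes\bar x^{k-1}\|^2$ and $\|\bar m^{k-1}\|^2$, and the momentum recursion at $k$ involves both $x^k$ and $x^{k-1}$); your potential would need extra lagged states to close in one step. One technical point worth flagging: because the two-point estimator is \emph{biased} (its conditional mean is $\nabla f_{i,t_k}$, not $\nabla f_i$), the cross term in the STORM expansion does not vanish; the paper controls it with Young's inequality, which degrades the contraction from $(1-\beta)^2$ to $\tfrac{1+(1-\beta)^2}{2}$ and produces an additional $\mathcal{O}(L^2 t_k^2/\beta)$ residual---this is where the hypothesis $\sum t_k^2\le \beta^2 M_t/d^4$ actually bites. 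Also, the smoothing bias is $\|\nabla f_i-\nabla f_{i,t}\|\le Lt$ (no $d$), while the $d^2L^2t_k^2$ terms come separately from the finite-difference remainder multiplied by the $d$ in the estimator; keeping these two $t$-sources distinct matters for the final constants.
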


\begin{remark}
    Let $\tilde{\sigma}^2 := (1+\sigma_0^2)\sigma_3^2 + \sigma_1^2$. If $b_0 = \mathcal{O}(d)$, the expected mean-squared stationary gap $\frac{1}{K} \sum_{k=0}^{K-1} \mathbb{E}\left\|\nabla f\left(\bar{x}^k\right)\right\|^2$ of DZOVR will decay with the rate of $\mathcal{O}\left(\frac{d}{K}\right)$ up to a steady-state error. 
    If we further let $\beta = \frac{\alpha^2}{n}$, the steady-state error is $\mathcal{O}\left(\frac{\alpha^2 d \tilde{\sigma}^2}{n^2}\right) + \mathcal{O}\left(\alpha^4\right)$, which is dominated by $\mathcal{O}\left(\frac{\alpha^2 d \tilde{\sigma}^2}{n^2}\right)$ when $\alpha$ is small enough. 
    Compared to methods without variance reduction, whose steady-state error is $\mathcal{O}\left(\frac{\alpha d \tilde{\sigma}^2}{n}\right)$, the variance is reduced by a factor of $\frac{\alpha}{n}$. 
\end{remark}

Let $a_0 = \min\left\{1, \frac{1}{2L},  \frac{1-\rho^2}{\sqrt{360d} L \rho^2}, \frac{\left(1-\rho^2\right)^2}{284 \sqrt{d} L \rho^2}, \frac{1}{2\sqrt{c_3}}, \frac{L d}{2\sqrt{n c_0 c_3}}, \frac{1}{4 \sqrt{ c_1 c_3}}, \sqrt{\frac{c_4}{4 c_3}}\right\}$. We have the following corollary.
\begin{corollary}
\label{cor}
Suppose Assumption 1-4 holds. Let $\alpha = \frac{1}{100L}\cdot\frac{n^{2/3}}{d^{2/3} K^{1/3}}$, $\beta=\frac{n^{1/3}}{d^{1/3} K^{2/3}}$, $b_0 = \left\lceil d^{2 / 3} (nK)^{1 / 3} \right\rceil$ and $t_k = \frac{\beta}{d^2 (k+1)^{1/4}}$ in Theorem \ref{thm}.
Then for $K \ge \frac{(100L)^3 n^2}{d^2 a_0^3}$, we have
    $$
    \frac{1}{K} \sum_{k=0}^{K-1} \mathbb{E}\left\|\nabla f\left(\bar{x}^k\right)\right\|^2 = \mathcal{O}\left(\left(\dfrac{d}{nK}\right)^{2/3}\right).
    $$
\end{corollary}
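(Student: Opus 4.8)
The plan is to read Corollary \ref{cor} as a direct specialization of Theorem \ref{thm}: once the stated choices of $\alpha$, $\beta$, $b_0$ and $\{t_k\}$ are shown to be admissible, it remains only to substitute them into the master inequality and verify, term by term, that every summand on the right-hand side is $\mathcal{O}((d/nK)^{2/3})$. First I would dispose of admissibility. The hypothesis $K \ge (100L)^3 n^2/(d^2 a_0^3)$ is exactly what makes $\alpha = \frac{1}{100L}\frac{n^{2/3}}{d^{2/3}K^{1/3}} \le a_0$, so the entire $\min$ defining the step-size constraint is met; for $K$ in this regime the momentum $\beta = n^{1/3}/(d^{1/3}K^{2/3})$ satisfies the upper constraints $\beta \le \min\{1,\ L^2 d^2/(nc_0),\ 1/(4c_1),\ c_4\}$, and the remaining constraint on $\beta$ is checked directly. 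For the smoothing radii, $t_0 = \beta/d^2$ saturates $t_0 \le \beta/d^2$, and $\sum_{k=0}^K t_k^2 = \frac{\beta^2}{d^4}\sum_{k=0}^K (k+1)^{-1/2} \le \frac{2\beta^2}{d^4}\sqrt{K+1}$, so the constraint $\sum_{k=0}^K t_k^2 \le \beta^2 M_t/d^4$ holds with $M_t \le 2\sqrt{K+1}$.

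Next I would substitute and group the terms of the bound by their role. The two summands that realize the advertised rate are the optimization term $\frac{4(f(\bar x^0)-f^*)}{\alpha K}$ and the sampling term $\frac{192 d\sigma_1^2}{b_0\beta K}$: plugging in gives $\frac{1}{\alpha K} = \Theta(d^{2/3}n^{-2/3}K^{-2/3})$ and $\frac{d}{b_0\beta K} = \Theta(d^{2/3}n^{-2/3}K^{-2/3})$, each exactly $(d/nK)^{2/3}$. The variance-reduced steady-state terms carrying the prefactor $\beta d/n$ are likewise of the target order, since $\frac{\beta d}{n} = \Theta(d^{2/3}n^{-2/3}K^{-2/3})$; this is the quantitative form of the variance reduction recorded in the remark after Theorem \ref{thm}. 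The initialization term $\frac{d\|\nabla f(x^0)\|^2}{n b_0\beta K}$ carries an extra $1/n$ and is therefore strictly below the target.

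The remaining summands are \emph{transient}: they come with prefactors $\alpha^2/(1-\rho^2)^p$, $\alpha^2\beta$, or $\alpha^2\beta^2$, and several are weighted by the composite quantities $e_g$, $e_m$ or by $M_t$. Here I would substitute the dominant piece $e_m \sim nd\sigma_1^2/b_0$ together with the $\mathcal{O}(1)$ initialization piece of $e_g = 2\rho^2 e_m + 2\rho^2\|\nabla f(x^0)\|^2$, and reduce each term to a single monomial in $n,d,K$ (times $\rho$- and $L$-dependent constants). The purpose of the large-$K$ hypothesis is precisely to force every such monomial below $(d/nK)^{2/3}$.

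The hard part will be this last bookkeeping. Several transient terms — for instance the $\alpha^2\beta d^2/(1-\rho^2)^p$ contributions on the $\sigma_3^2$ and $\sigma_1^2$ lines, and the $\frac{\alpha^2 d}{K\beta n}\,e_g$ contribution — reduce to powers of $n,d,K$ that exceed the target unless $K$ is taken suitably large, and the delicate point is that the needed threshold is not a crude $K \gtrsim n^2$ but is encoded through $a_0$: because $a_0$ contains the factors $\frac{1-\rho^2}{\sqrt{360d}L\rho^2}$ and $\frac{Ld}{2\sqrt{nc_0c_3}}$, the hypothesis $K \ge (100L)^3 n^2/(d^2 a_0^3)$ simultaneously tames the $(1-\rho^2)$ denominators and the $n$- and $d$-growth of these monomials. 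I would therefore verify the terms in the order above, leaving the $e_g$, $e_m$ and $M_t$ terms last and checking each against $(d/nK)^{2/3}$ through the $a_0$-calibrated lower bound on $K$; ensuring that the smoothing contributions ($M_t$ and the $t_0^2$ terms) are genuinely dominated is the one place where the precise decay rate of $t_k$ enters, and is where I would be most careful.
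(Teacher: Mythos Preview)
The paper gives no explicit proof of Corollary~\ref{cor}; it is stated as an immediate consequence of Theorem~\ref{thm} by substituting the specified $\alpha,\beta,b_0,\{t_k\}$ into the master bound. Your proposal carries out precisely this substitution---verifying admissibility of the parameter choices (with the large-$K$ hypothesis enforcing $\alpha\le a_0$ and the $a_0$-encoded $\rho,n,d$ dependence absorbing the transient terms), identifying the three leading contributions $\tfrac{1}{\alpha K}$, $\tfrac{d}{b_0\beta K}$, $\tfrac{\beta d}{n}$ that each equal $\Theta((d/nK)^{2/3})$, and bounding $M_t$ via $\sum_k(k+1)^{-1/2}$---so it matches the paper's implicit argument.
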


\begin{remark}
    Corollary \ref{cor} implies that for a sufficiently large $K$, DZOVR achieves a convergence rate of $\mathcal{O}((d/nK)^{2/3})$, which is better than the other existing zeroth-order algorithms \cite{yi2022,zhang2021,zhang2022}.
    In other words, the best sampling complexity to reach an $\epsilon$-accurate stationary point achieved by existing distributed zeroth-order algorithms is $\mathcal{O}(dn^{-1}\epsilon^{-4})$ while our algorithm improves it to $\mathcal{O}(dn^{-1}\epsilon^{-3})$.
    Furthermore, Corollary \ref{cor} also demonstrates that DZOVR has the network-independent and linear speedup properties\footnote{ 
    Linear speedup means that the number of stochastic gradient computations required at each node in the network is reduced by a factor of $1/n$.}.
\end{remark}

\begin{remark}
The best $\mathcal{O}(n^{-1}\epsilon^{-3})$ sampling complexity for the distributed first-order methods is established in \cite{xin2021hybrid}.
It can be observed that the sampling complexity of DZOVR only looses a factor of $d$ compared with that first-order method.
It's worth noting that in zeroth-order algorithms, gradient estimates are biased, and gradient tracking does not achieve consensus, which poses new challenges in the analysis.
\end{remark}

\section{Numerical Experiments}
\label{sec3}

In this section, we compare DZOVR with other state-of-the-art algorithms and validate the theoretical results through numerical experiments. 
To this end, we follow the problem setup in \cite{zhang2022,liu2018svrg,zhang2021}, where 
$$
f(x)= \dfrac{1}{n} \sum_{i=1}^n \left( \dfrac{1}{m} \sum_{j=1}^m \Big( y_{ij}-\frac{1}{1+e^{-x^{\top} a_{ij}}} \Big)^2 \right).
$$
Let $d = 20$, $n = 20$ and $m=100$.
Given a reference $x = 1_d $, we sample $a_{ij}$ from $\mathcal{N}(0,I)$. If $1/(1+e^{-x^{\top} a_{ij}}) \ge 0.5$, then set $y_{ij} = 1$; otherwise, set $y_{ij} = 0 $. 

\begin{figure}[ht]
    \centering
    \subfigure{
        \includegraphics[width=2.5in]{./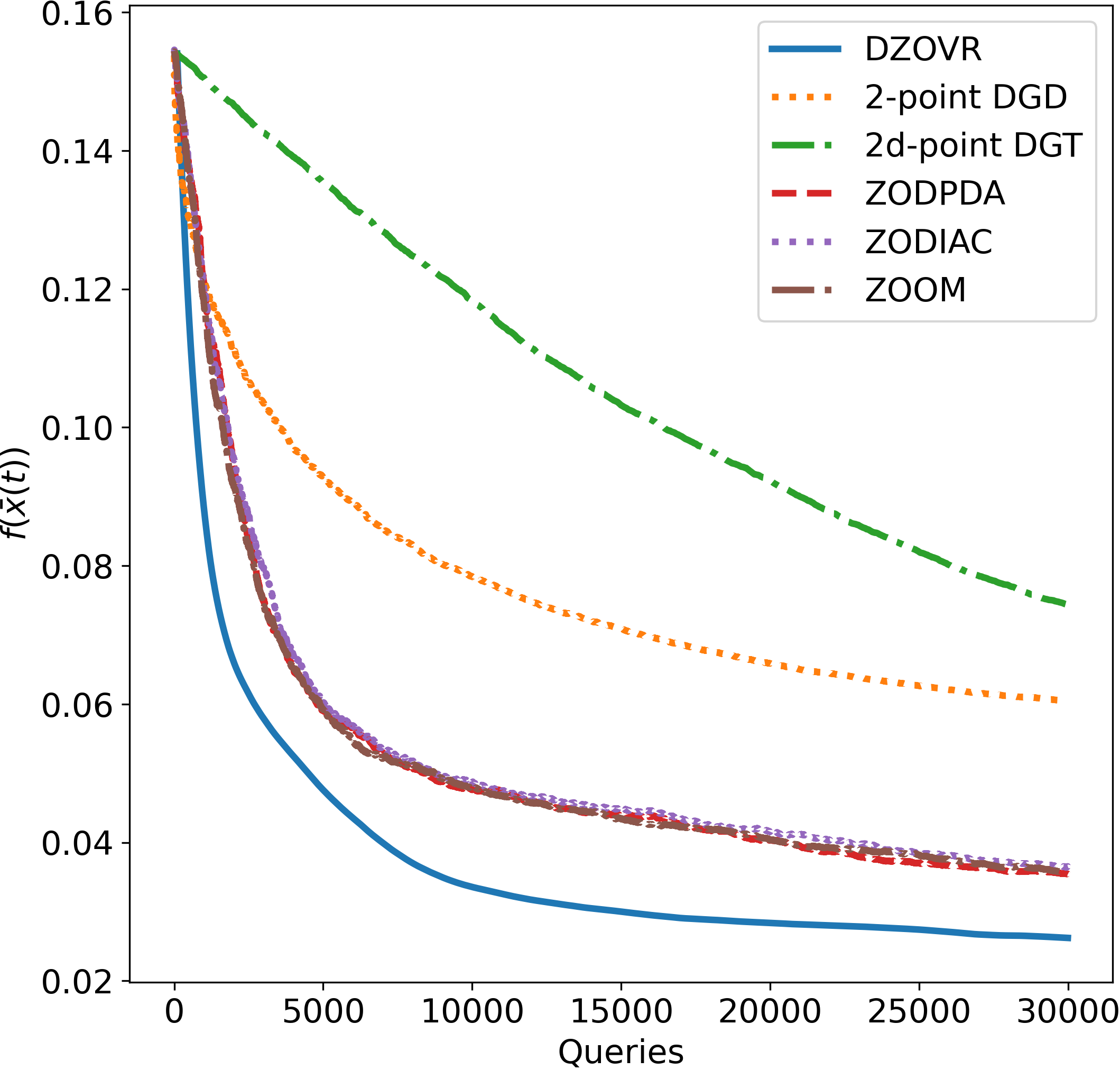}
    }
    \subfigure{
    	\includegraphics[width=2.5in]{./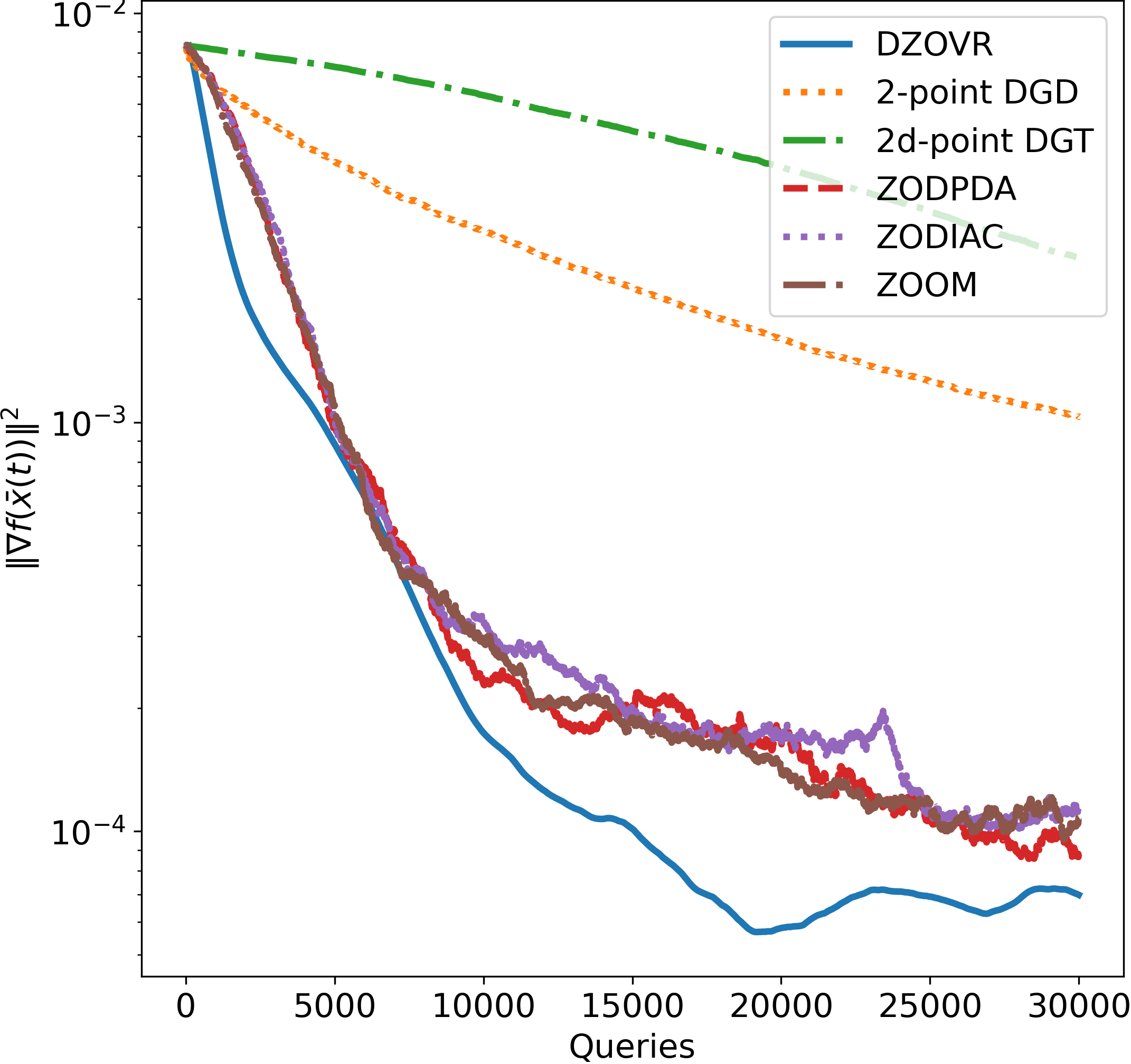}
    }
    \caption{Comparison of the performance of different algorithms.}
    \label{fig1}
\end{figure}

We first compare DZOVR with the following decentralized zeroth-order algorithms: 2-point DGD \cite{tang2020}, 2d-point DGT \cite{tang2020}, ZODPDA \cite{yi2022}, ZODIAC \cite{zhang2021}, and ZOOM \cite{zhang2022}. 
For 2-point DGD and 2d-point DGT, their step sizes are set as $0.1/k$ and 0.02 respectively.
For the other three methods, the step size is set to 0.01.
In DZOVR, we set $\beta=0.001$, $\alpha=0.05$ and $b_0 = 100$.
Here the graph is created by first sampling $n$ points on $\mathbb{S}^2$ uniformly at random, and then linking pairs of points with spherical distances smaller than $\pi/4$ \cite{tang2020}.
Metropolis-Hastings weights \cite{xiao2005} are used to build $W$.
The results of the numerical experiments are displayed in Figure \ref{fig1}. 
It's worth noting that the horizontal axis in the figure represents queries because in zeroth-order algorithms, the focus is on achieving better results with as few queries as possible, rather than the number of iterations.
Though DZOVR requires 4 queries in each iteration while the other methods require only 2 queries (except for 2d-point DGT), it still outperforms the other algorithms due to its faster convergence.

\begin{figure}[ht]
    \centering
    \subfigure{
        \includegraphics[width=2.5in]{./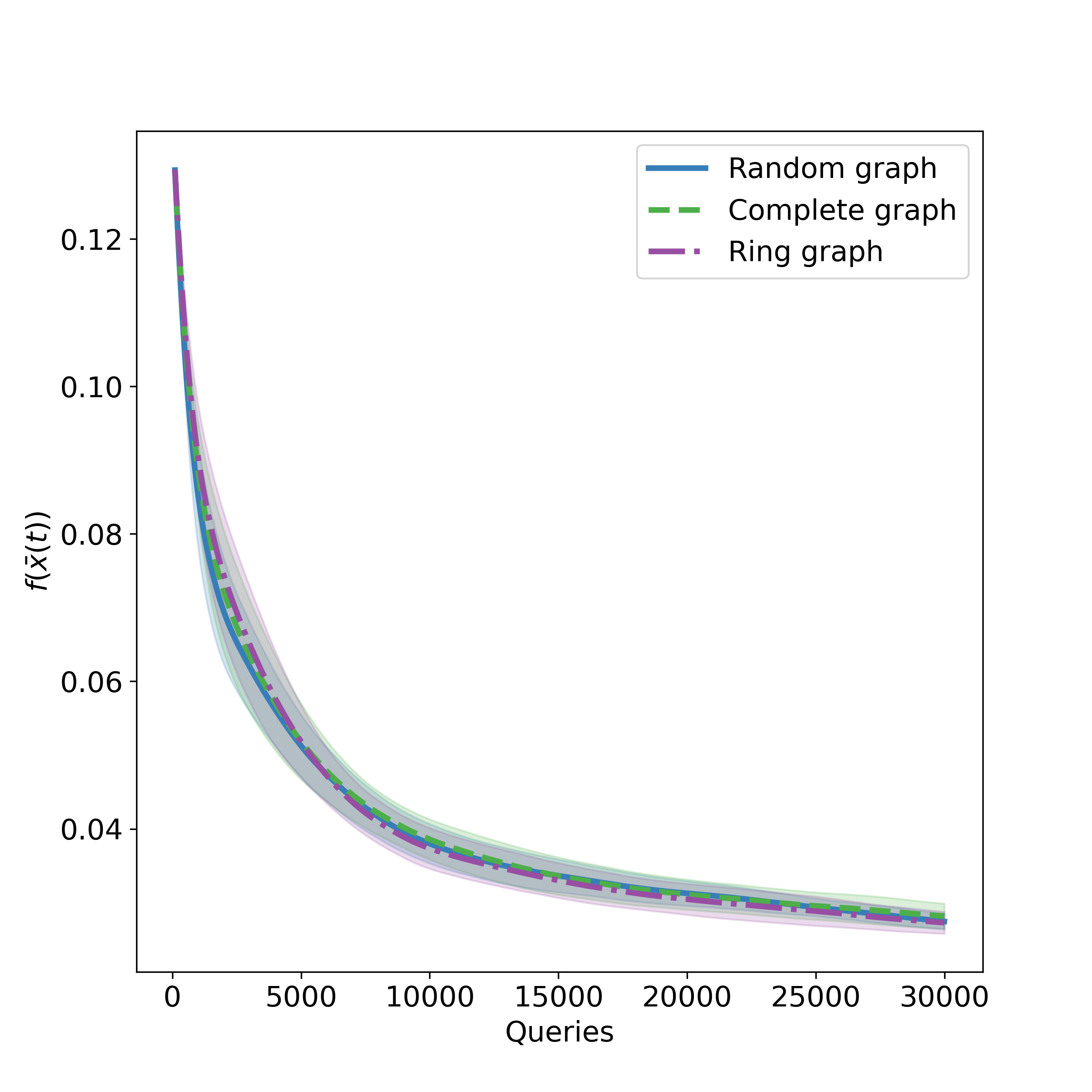}
    }
    \subfigure{
    	\includegraphics[width=2.5in]{./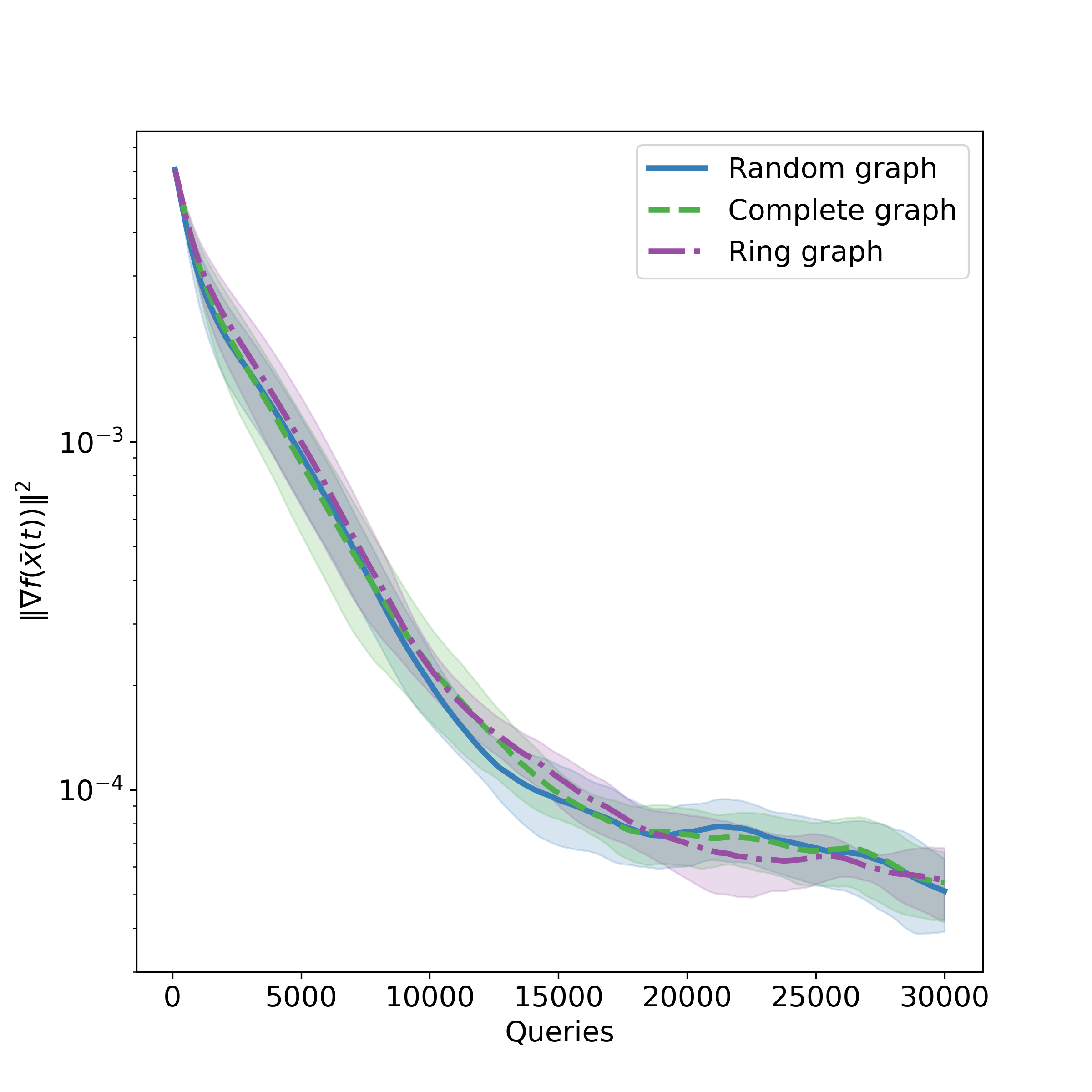}
    }
    \caption{Performance of DZOVR under three different network topologies. The curves in the graph represent the average results out of 10 random trials, and the shaded areas represent the standard deviation.}
    \label{fig2}
\end{figure}

To investigate the network-independent property of DZOVR, we conduct numerical experiments on three different network structures: the random graph mentioned above, the complete graph, and the ring graph.
The mean objective function and gradient values as well as the standard deviations out of 10 repeated random tests are presented in Figure \ref{fig2}. 
It can be observed that DZOVR performs consistently well across the three network structures, indicating its network-independent property.

\begin{figure}[ht]
    \centering
    \includegraphics[width=2.5in]{./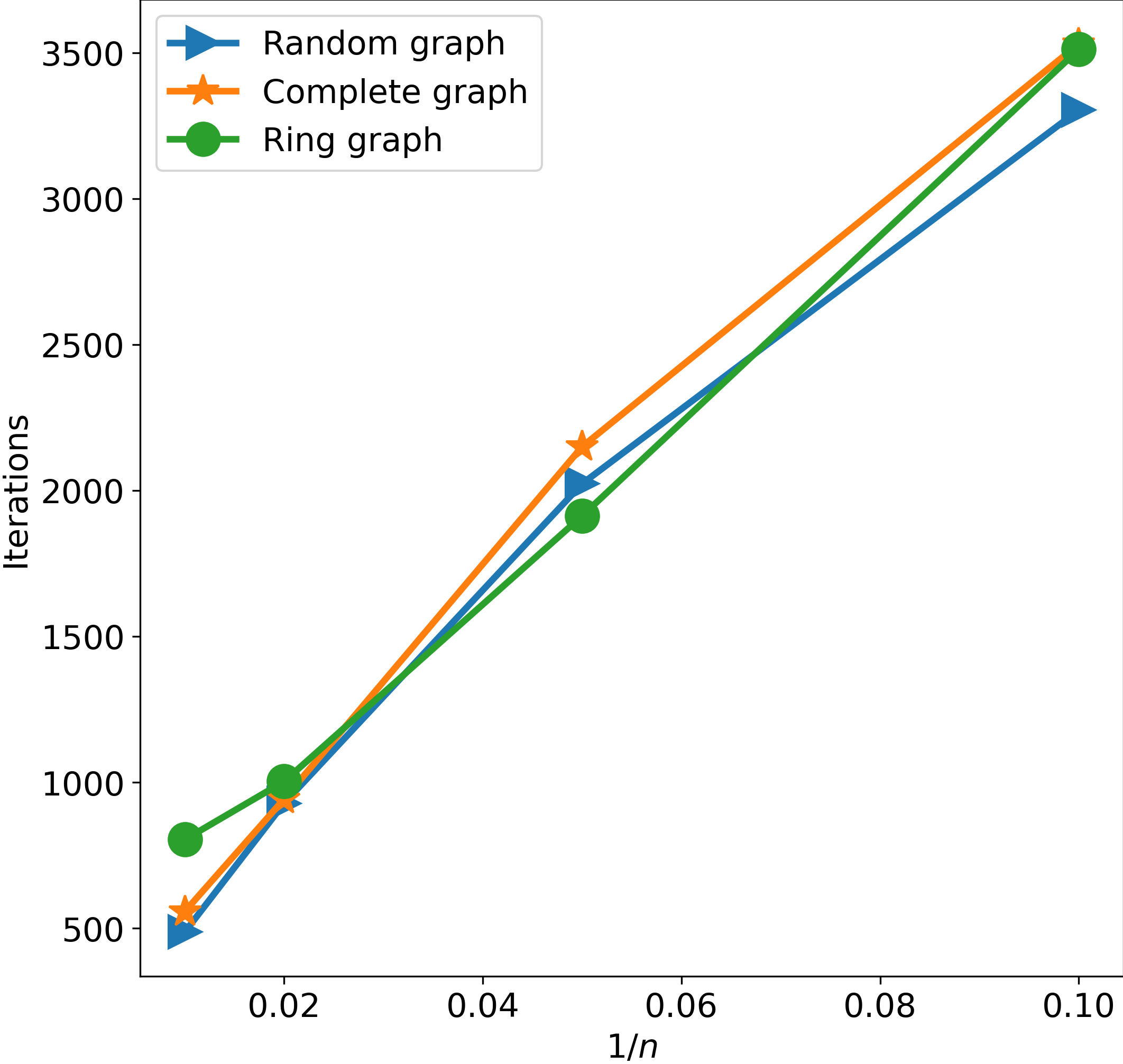}
    \caption{The linear speedup performance of DZOVR. The value of $n$ are $\{10, 20, 50, 100\}$.}
    \label{fig3}
\end{figure}
Regarding the linear speedup property of the algorithm, experiments are conducted with $n = \{10,20,50,100\}$, $\alpha = 0.005n^{2/3}$ on the three aforementioned graphs. 
We record the number of iterations required for the squared norm of the gradient to be less than 0.001.
The results are shown in Figure \ref{fig3}, where the $x$-axis represents $1/n$ and the $y$-axis represents the number of iterations required to achieve the specified accuracy.
A desirable linear speedup can be observed from the figure.

\section{Conclusion}
\label{sec4}
We propose a decentralized zeroth-order variance reduced algorithm to mitigate the adverse effects of excessive variance in distributed zeroth-order optimization. 
It is proved that the algorithm convergences at a rate of $\mathcal{O}((d/{nK})^{2/3})$. 
Numerical experiments demonstrate its superior performance over state-of-the-art algorithms and its network-independent and linear speedup properties.
Interesting directions for future works include analyzing the algorithm under the PL condition, accelerating the algorithm, or extending it to the non-smooth setting.

\bibliographystyle{plain}
\bibliography{sample}

\begin{thebibliography}{10}

\bibitem{chen2022}
Jinchi Chen, Jie Feng, Weiguo Gao, and Ke~Wei.
\newblock Decentralized natural policy gradient with variance reduction for
  collaborative multi-agent reinforcement learning.
\newblock {\em arXiv preprint arXiv:2209.02179}, 2022.

\bibitem{chen2017zoo}
Pin-Yu Chen, Huan Zhang, Yash Sharma, Jinfeng Yi, and Cho-Jui Hsieh.
\newblock {ZOO}: Zeroth order optimization based black-box attacks to deep
  neural networks without training substitute models.
\newblock In {\em Proceedings of the 10th ACM workshop on artificial
  intelligence and security}, pages 15--26, 2017.

\bibitem{cutkosky2019}
Ashok Cutkosky and Francesco Orabona.
\newblock Momentum-based variance reduction in non-convex sgd.
\newblock {\em Advances in neural information processing systems}, 32, 2019.

\bibitem{di2016}
Paolo Di~Lorenzo and Gesualdo Scutari.
\newblock {NEXT}: In-network nonconvex optimization.
\newblock {\em IEEE Transactions on Signal and Information Processing over
  Networks}, 2(2):120--136, 2016.

\bibitem{fang2018}
Cong Fang, Chris~Junchi Li, Zhouchen Lin, and Tong Zhang.
\newblock {SPIDER}: Near-optimal non-convex optimization via stochastic
  path-integrated differential estimator.
\newblock {\em Advances in neural information processing systems}, 31, 2018.

\bibitem{gao2023}
Juan Gao, Xin-Wei Liu, Yu-Hong Dai, Yakui Huang, and Junhua Gu.
\newblock Distributed stochastic gradient tracking methods with momentum
  acceleration for non-convex optimization.
\newblock {\em Computational Optimization and Applications}, 84(2):531--572,
  2023.

\bibitem{ghadimi2013}
Saeed Ghadimi and Guanghui Lan.
\newblock Stochastic first-and zeroth-order methods for nonconvex stochastic
  programming.
\newblock {\em SIAM Journal on Optimization}, 23(4):2341--2368, 2013.

\bibitem{guo2016}
Junyao Guo, Gabriela Hug, and Ozan~K Tonguz.
\newblock A case for nonconvex distributed optimization in large-scale power
  systems.
\newblock {\em IEEE Transactions on Power Systems}, 32(5):3842--3851, 2016.

\bibitem{hajinezhad2019}
Davood Hajinezhad, Mingyi Hong, and Alfredo Garcia.
\newblock {ZONE}: Zeroth-order nonconvex multiagent optimization over networks.
\newblock {\em IEEE Transactions on Automatic Control}, 64(10):3995--4010,
  2019.

\bibitem{ilyas2018}
Andrew Ilyas, Logan Engstrom, Anish Athalye, and Jessy Lin.
\newblock Black-box adversarial attacks with limited queries and information.
\newblock In {\em International conference on machine learning}, pages
  2137--2146. PMLR, 2018.

\bibitem{johnson2013}
Rie Johnson and Tong Zhang.
\newblock Accelerating stochastic gradient descent using predictive variance
  reduction.
\newblock {\em Advances in neural information processing systems}, 26, 2013.

\bibitem{liang2019}
Shu Liang, George Yin, et~al.
\newblock Exponential convergence of distributed primal--dual convex
  optimization algorithm without strong convexity.
\newblock {\em Automatica}, 105:298--306, 2019.

\bibitem{liu2018vr}
Liu Liu, Minhao Cheng, Cho-Jui Hsieh, and Dacheng Tao.
\newblock Stochastic zeroth-order optimization via variance reduction method.
\newblock {\em arXiv preprint arXiv:1805.11811}, 2018.

\bibitem{liu2018}
Sijia Liu, Jie Chen, Pin-Yu Chen, and Alfred Hero.
\newblock Zeroth-order online alternating direction method of multipliers:
  Convergence analysis and applications.
\newblock In {\em International Conference on Artificial Intelligence and
  Statistics}, pages 288--297. PMLR, 2018.

\bibitem{liu2018svrg}
Sijia Liu, Bhavya Kailkhura, Pin-Yu Chen, Paishun Ting, Shiyu Chang, and Lisa
  Amini.
\newblock Zeroth-order stochastic variance reduction for nonconvex
  optimization.
\newblock {\em Advances in Neural Information Processing Systems}, 31, 2018.

\bibitem{maritan2023}
Alessio Maritan and Luca Schenato.
\newblock {ZO-JADE}: Zeroth-order curvature-aware distributed multi-agent
  convex optimization.
\newblock {\em IEEE Control Systems Letters}, 2023.

\bibitem{mhanna2022}
Elissa Mhanna and Mohamad Assaad.
\newblock Zero-order one-point estimate with distributed stochastic
  gradient-tracking technique.
\newblock {\em arXiv preprint arXiv:2210.05618}, 2022.

\bibitem{molzahn2017}
Daniel~K Molzahn, Florian D{\"o}rfler, Henrik Sandberg, Steven~H Low, Sambuddha
  Chakrabarti, Ross Baldick, and Javad Lavaei.
\newblock A survey of distributed optimization and control algorithms for
  electric power systems.
\newblock {\em IEEE Transactions on Smart Grid}, 8(6):2941--2962, 2017.

\bibitem{nedic2009}
Angelia Nedic and Asuman Ozdaglar.
\newblock Distributed subgradient methods for multi-agent optimization.
\newblock {\em IEEE Transactions on Automatic Control}, 54(1):48--61, 2009.

\bibitem{nesterov2017}
Yurii Nesterov and Vladimir Spokoiny.
\newblock Random gradient-free minimization of convex functions.
\newblock {\em Foundations of Computational Mathematics}, 17:527--566, 2017.

\bibitem{nguyen2017}
Lam~M Nguyen, Jie Liu, Katya Scheinberg, and Martin Tak{\'a}{\v{c}}.
\newblock {SARAH}: A novel method for machine learning problems using
  stochastic recursive gradient.
\newblock In {\em International conference on machine learning}, pages
  2613--2621. PMLR, 2017.

\bibitem{pan2020}
Taoxing Pan, Jun Liu, and Jie Wang.
\newblock {D-SPIDER-SFO}: A decentralized optimization algorithm with faster
  convergence rate for nonconvex problems.
\newblock In {\em Proceedings of the AAAI Conference on Artificial
  Intelligence}, volume~34, pages 1619--1626, 2020.

\bibitem{papernot2017}
Nicolas Papernot, Patrick McDaniel, Ian Goodfellow, Somesh Jha, Z~Berkay Celik,
  and Ananthram Swami.
\newblock Practical black-box attacks against machine learning.
\newblock In {\em Proceedings of the 2017 ACM on Asia conference on computer
  and communications security}, pages 506--519, 2017.

\bibitem{pu2021}
Shi Pu and Angelia Nedi{\'c}.
\newblock Distributed stochastic gradient tracking methods.
\newblock {\em Mathematical Programming}, 187:409--457, 2021.

\bibitem{qu2017}
Guannan Qu and Na~Li.
\newblock Harnessing smoothness to accelerate distributed optimization.
\newblock {\em IEEE Transactions on Control of Network Systems},
  5(3):1245--1260, 2017.

\bibitem{scutari2019}
Gesualdo Scutari and Ying Sun.
\newblock Distributed nonconvex constrained optimization over time-varying
  digraphs.
\newblock {\em Mathematical Programming}, 176:497--544, 2019.

\bibitem{sun2020}
Haoran Sun, Songtao Lu, and Mingyi Hong.
\newblock Improving the sample and communication complexity for decentralized
  non-convex optimization: Joint gradient estimation and tracking.
\newblock In {\em International conference on machine learning}, pages
  9217--9228. PMLR, 2020.

\bibitem{tang2020}
Yujie Tang, Junshan Zhang, and Na~Li.
\newblock Distributed zero-order algorithms for nonconvex multiagent
  optimization.
\newblock {\em IEEE Transactions on Control of Network Systems}, 8(1):269--281,
  2020.

\bibitem{wang2017}
Yamin Wang, Shouxiang Wang, and Lei Wu.
\newblock Distributed optimization approaches for emerging power systems
  operation: A review.
\newblock {\em Electric Power Systems Research}, 144:127--135, 2017.

\bibitem{xiao2005}
Lin Xiao, Stephen Boyd, and Sanjay Lall.
\newblock A scheme for robust distributed sensor fusion based on average
  consensus.
\newblock In {\em IPSN 2005. Fourth International Symposium on Information
  Processing in Sensor Networks, 2005.}, pages 63--70. IEEE, 2005.

\bibitem{xin2021hybrid}
Ran Xin, Usman Khan, and Soummya Kar.
\newblock A hybrid variance-reduced method for decentralized stochastic
  non-convex optimization.
\newblock In {\em International Conference on Machine Learning}, pages
  11459--11469. PMLR, 2021.

\bibitem{xin2021}
Ran Xin, Usman~A Khan, and Soummya Kar.
\newblock An improved convergence analysis for decentralized online stochastic
  non-convex optimization.
\newblock {\em IEEE Transactions on Signal Processing}, 69:1842--1858, 2021.

\bibitem{xin2020general}
Ran Xin, Shi Pu, Angelia Nedi{\'c}, and Usman~A Khan.
\newblock A general framework for decentralized optimization with first-order
  methods.
\newblock {\em Proceedings of the IEEE}, 108(11):1869--1889, 2020.

\bibitem{xu2018}
Jinming Xu, Shanying Zhu, Yeng~Chai Soh, and Lihua Xie.
\newblock A bregman splitting scheme for distributed optimization over
  networks.
\newblock {\em IEEE Transactions on Automatic Control}, 63(11):3809--3824,
  2018.

\bibitem{yang2019}
Tao Yang, Xinlei Yi, Junfeng Wu, Ye~Yuan, Di~Wu, Ziyang Meng, Yiguang Hong,
  Hong Wang, Zongli Lin, and Karl~H Johansson.
\newblock A survey of distributed optimization.
\newblock {\em Annual Reviews in Control}, 47:278--305, 2019.

\bibitem{yi2021}
Xinlei Yi, Shengjun Zhang, Tao Yang, Tianyou Chai, and Karl~H Johansson.
\newblock Linear convergence of first-and zeroth-order primal--dual algorithms
  for distributed nonconvex optimization.
\newblock {\em IEEE Transactions on Automatic Control}, 67(8):4194--4201, 2021.

\bibitem{yi2022primal}
Xinlei Yi, Shengjun Zhang, Tao Yang, Tianyou Chai, and Karl~Henrik Johansson.
\newblock A primal-dual sgd algorithm for distributed nonconvex optimization.
\newblock {\em IEEE/CAA Journal of Automatica Sinica}, 9(5):812--833, 2022.

\bibitem{yi2022}
Xinlei Yi, Shengjun Zhang, Tao Yang, and Karl~H Johansson.
\newblock Zeroth-order algorithms for stochastic distributed nonconvex
  optimization.
\newblock {\em Automatica}, 142:110353, 2022.

\bibitem{zhang2021multi}
Kaiqing Zhang, Zhuoran Yang, and Tamer Ba{\c{s}}ar.
\newblock Multi-agent reinforcement learning: A selective overview of theories
  and algorithms.
\newblock {\em Handbook of reinforcement learning and control}, pages 321--384,
  2021.

\bibitem{zhang2018fully}
Kaiqing Zhang, Zhuoran Yang, Han Liu, Tong Zhang, and Tamer Basar.
\newblock Fully decentralized multi-agent reinforcement learning with networked
  agents.
\newblock In {\em International Conference on Machine Learning}, pages
  5872--5881. PMLR, 2018.

\bibitem{zhang2021}
Shengjun Zhang, Yunlong Dong, Dong Xie, Lisha Yao, Colleen~P Bailey, and
  Shengli Fu.
\newblock Convergence analysis of nonconvex distributed stochastic zeroth-order
  coordinate method.
\newblock In {\em 2021 60th IEEE Conference on Decision and Control (CDC)},
  pages 1180--1185. IEEE, 2021.

\bibitem{zhang2022}
Shengjun Zhang, Tan Shen, Hongwei Sun, Yunlong Dong, Dong Xie, and Heng Zhang.
\newblock Zeroth-order stochastic coordinate methods for decentralized
  non-convex optimization.
\newblock {\em arXiv preprint arXiv:2204.04743}, 2022.

\end{thebibliography}
\appendix

\section{Useful Lemmas}
We first present several useful lemmas that will be utilized in the proof of Theorem \ref{thm}.

\begin{lemma}[Basic properties of 2-point estimator \cite{tang2020}] \quad
\label{lemma1}
    \begin{itemize}
        \item[1.] 
            For a $L$-smooth function $f$,
            $$
            \left|\frac{f(x+t u)-f(x-t u)}{2 t}-\langle\nabla f(x), u\rangle\right| \leq \frac{1}{2} t L\|u\|^2
            $$
            holds for any $x, u \in \mathbb{R}^d$.
        \item[2.] 
            For the 2-point estimator $\hat{\nabla} f(x; u)= d \; \dfrac{f(x+t u)-f(x-t u)}{2 t} \cdot u$, we have
            $$
            \mathbb{E}_{u \sim \mathcal{U}\left(\mathbb{S}_{d-1}\right)}[\hat{\nabla} f(x; u)]=\nabla f_t(x),
            $$
            where $f_t(x):=\mathbb{E}_{y \sim \mathcal{U}\left(\mathbb{B}_d\right)}[f(x+t y)]$. In addition, if $f$ is $L$-smooth, then $f_t$ is also $L$-smooth, and the following inequality holds:
            $$
            \left\|\nabla f(x)-\nabla f_t(x)\right\| \leq t L.
            $$
        \item[3.] 
            For any deterministic $h \in \mathbb{R}^d$, we have
            $$
            \mathbb{E}_{z \sim \mathcal{U}\left(\mathbb{S}_{d-1}\right)}[d \cdot\langle h, z\rangle z]=h \quad \text{and} \quad \mathbb{E}_{z \sim \mathcal{U}\left(\mathbb{S}_{d-1}\right)}\left[d \cdot\langle h, z\rangle^2\right]=\|h\|^2.
            $$
    \end{itemize}
\end{lemma}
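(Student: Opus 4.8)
The plan is to prove the three claims independently, as they are separate elementary facts about the ball-smoothing $f_t$ and the sphere-sampled estimator. For part~1, I would apply the descent-lemma consequence of $L$-smoothness, $|f(y)-f(x)-\langle\nabla f(x),y-x\rangle|\le \frac{L}{2}\|y-x\|^2$, at the two points $y=x+tu$ and $y=x-tu$. Setting $a=f(x+tu)-f(x)-t\langle\nabla f(x),u\rangle$ and $b=f(x-tu)-f(x)+t\langle\nabla f(x),u\rangle$, these give $|a|\le\frac{L}{2}t^2\|u\|^2$ and $|b|\le\frac{L}{2}t^2\|u\|^2$. Since $a-b=f(x+tu)-f(x-tu)-2t\langle\nabla f(x),u\rangle$, the triangle inequality yields $|a-b|\le Lt^2\|u\|^2$, and dividing by $2t$ is exactly the stated bound.

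For part~2, the core is the identity $\mathbb{E}_u[\hat{\nabla}f(x;u)]=\nabla f_t(x)$. First I would rewrite the ball-average as a normalized volume integral, $f_t(x)=\frac{1}{\mathrm{vol}(t\mathbb{B}_d)}\int_{\|z\|\le t}f(x+z)\,dz$, differentiate under the integral, and use $\nabla_x f(x+z)=\nabla_z f(x+z)$. The key step is the divergence theorem: $\int_{\|z\|\le t}\nabla_z f(x+z)\,dz=\int_{\|z\|=t}f(x+z)\,\nu\,dS$ with outward normal $\nu=z/t$. Parametrizing the boundary by $z=tu$ with $u\in\mathbb{S}_{d-1}$ and collecting the constants $\mathrm{vol}(t\mathbb{B}_d)=t^d\omega_d$ and surface area $d\omega_d$ produces $\nabla f_t(x)=\frac{d}{t}\mathbb{E}_u[f(x+tu)u]$. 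A symmetry argument ($u\mapsto -u$ preserves $\mathcal{U}(\mathbb{S}_{d-1})$) turns this into $\mathbb{E}_u[d\,\frac{f(x+tu)-f(x-tu)}{2t}u]$, which is $\mathbb{E}_u[\hat{\nabla}f(x;u)]$. Both remaining assertions then follow from the representation $\nabla f_t(x)=\mathbb{E}_{y\sim\mathcal{U}(\mathbb{B}_d)}[\nabla f(x+ty)]$: Jensen's inequality with the Lipschitz gradient gives $L$-smoothness of $f_t$, and the same estimate together with $\|y\|\le 1$ on the ball gives $\|\nabla f(x)-\nabla f_t(x)\|\le tL$.

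For part~3, I would first establish $\mathbb{E}_{z\sim\mathcal{U}(\mathbb{S}_{d-1})}[zz^\top]=\frac{1}{d}I_d$. Rotational invariance of the uniform measure forces $\mathbb{E}[zz^\top]=cI_d$; taking the trace and using $\|z\|^2=1$ on the sphere pins down $c=1/d$. The two claims then follow by direct substitution, $\mathbb{E}[d\langle h,z\rangle z]=d\,\mathbb{E}[zz^\top]h=h$ and $\mathbb{E}[d\langle h,z\rangle^2]=d\,h^\top\mathbb{E}[zz^\top]h=\|h\|^2$. The only genuinely nontrivial step in the whole lemma is the divergence-theorem computation of part~2; everything else is a routine application of $L$-smoothness, Jensen's inequality, and the symmetry of the spherical measure.
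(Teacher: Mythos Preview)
Your proposal is correct and follows the standard route for these facts. Note, however, that the paper does not give its own proof of this lemma: it is stated with a citation to \cite{tang2020} and used as a black box throughout. There is therefore nothing in the paper to compare your argument against. Each of your three steps---the two-sided descent-lemma application for part~1, the divergence-theorem derivation of $\nabla f_t(x)=\frac{d}{t}\mathbb{E}_u[f(x+tu)u]$ together with the symmetrization $u\mapsto -u$ for part~2, and the rotational-invariance identity $\mathbb{E}[zz^\top]=\frac{1}{d}I_d$ for part~3---is the expected textbook argument, and your handling of the smoothness and bias bounds via $\nabla f_t(x)=\mathbb{E}_y[\nabla f(x+ty)]$ is exactly right.
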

For the sake of clarity, we define
\begin{align*}
\bar{m}^k &= \frac{1}{n} \sum_{i=1}^n m_i^k,\\
\nabla \bar{f}^k&=\frac{1}{n} \sum_{i=1}^n \nabla f_{i}\left(x_i^k\right),\\
\hat{\nabla} \bar{f}\left(x^k ; u^k, \xi^k\right) &= \frac{1}{n} \sum_{i=1}^n \hat{\nabla} f_i\left(x_i^k ; u_i^k, \xi_i^k\right),\\
\nabla f_{i, t_k}\left(x_i^k\right)&=\mathbb{E}_{u \sim \mathcal{U}\left(\mathbb{S}_{d-1}\right)}\left[\hat{\nabla} f_i\left(x_i^k\right)\right],\\
\nabla f^k &= \begin{bmatrix}
    \nabla f_1(x_i^k)^\top &\cdots &\nabla f_n(x_n^k)^\top
\end{bmatrix}^\top\in\mathbb{R}^{nd}.
\end{align*}

\begin{lemma}
\label{lemma2}
Suppose $\{x^k, u^k, \xi^k\}$ is a sequence generated by Algorithm \ref{algo}. Under Assumption \ref{ass2}, \ref{ass3}, and \ref{ass4}, for any $k\geq 1$, one has
    \begin{align*}
    \mathbb{E}\left[\left\|\hat{\nabla} \bar{f}\left(x^k ; u^k, \xi^k\right)\right\|^2 \mid \mathcal{F}_k\right] \leq & \frac{1}{2} L^2 d^2 t_k^2+\left(\frac{4 d L^2\left(1+\sigma_0^2\right)}{n^2}+\frac{4 L^2}{n}\right)\left\|x^k-1_n \otimes \bar{x}^k\right\|^2 \\
    & +\left(\frac{8 d}{n}\left(1+\sigma_0^2\right)\left(1+\sigma_2^2\right)+4\right)\left\|\nabla f\left(\bar{x}^k\right)\right\|^2+\frac{8 d}{n}\left(1+\sigma_0^2\right) \sigma_3^2+\frac{2 d}{n} \sigma_1^2.
    \end{align*}
\end{lemma}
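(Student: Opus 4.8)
The plan is to split the conditional second moment into a squared-mean (bias) term and a variance term, exploiting that the $n$ estimators are conditionally independent given $\mathcal{F}_k$. First I would verify that each estimator is conditionally unbiased for the smoothed gradient, i.e.\ $\mathbb{E}[\hat{\nabla} f_i(x_i^k;u_i^k,\xi_i^k)\mid\mathcal{F}_k]=\nabla f_{i,t_k}(x_i^k)$: taking expectation over $\xi_i^k$ first uses $f_i=\mathbb{E}_{\xi_i}F_i(\cdot;\xi_i)$ to collapse the stochastic estimator into the deterministic two-point estimator, and then expectation over $u_i^k$ invokes part~2 of Lemma~\ref{lemma1}. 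Since $\{u_i^k,\xi_i^k\}_{i=1}^n$ are independent across $i$, the cross terms vanish and
$$\mathbb{E}\big[\|\hat{\nabla}\bar f(x^k;u^k,\xi^k)\|^2\mid\mathcal{F}_k\big]=\Big\|\tfrac1n\sum_{i=1}^n\nabla f_{i,t_k}(x_i^k)\Big\|^2+\tfrac1{n^2}\sum_{i=1}^n\mathbb{E}\big[\|\hat{\nabla} f_i(x_i^k;u_i^k,\xi_i^k)-\nabla f_{i,t_k}(x_i^k)\|^2\mid\mathcal{F}_k\big].$$

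For the bias term I would insert the true gradients and the consensus point. Part~2 of Lemma~\ref{lemma1} gives $\|\nabla f_{i,t_k}(x_i^k)-\nabla f_i(x_i^k)\|\le t_kL$, and $L$-smoothness (Assumption~\ref{ass2}) gives $\|\nabla\bar f^k-\nabla f(\bar x^k)\|^2\le\frac{L^2}{n}\|x^k-1_n\otimes\bar x^k\|^2$. Two applications of $\|a+b\|^2\le2\|a\|^2+2\|b\|^2$ then yield $\big\|\tfrac1n\sum_i\nabla f_{i,t_k}(x_i^k)\big\|^2\le 4\|\nabla f(\bar x^k)\|^2+\frac{4L^2}{n}\|x^k-1_n\otimes\bar x^k\|^2+2t_k^2L^2$, which accounts for the $4\|\nabla f(\bar x^k)\|^2$ and $\frac{4L^2}{n}\|x^k-1_n\otimes\bar x^k\|^2$ terms. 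Note that this step uses only smoothness and \emph{not} the dissimilarity Assumption~\ref{ass4}, consistent with the absence of $\sigma_2,\sigma_3$ from these two terms.

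For the variance term I would bound it by the raw second moment, $\mathbb{E}\|\hat{\nabla} f_i-\nabla f_{i,t_k}\|^2\le\mathbb{E}\|\hat{\nabla} f_i\|^2$. Since $\|u_i^k\|=1$, write $\|\hat{\nabla} f_i(x_i^k;u_i^k,\xi_i^k)\|^2=d^2 D_i^2$ with $D_i=\frac{F_i(x_i^k+t_ku_i^k;\xi_i^k)-F_i(x_i^k-t_ku_i^k;\xi_i^k)}{2t_k}$. Applying part~1 of Lemma~\ref{lemma1} to $F_i(\cdot;\xi_i^k)$ (which is $L$-smooth by Assumption~\ref{ass2}) gives $D_i=\langle\nabla F_i(x_i^k;\xi_i^k),u_i^k\rangle+r$ with $|r|\le\frac12 t_kL$, hence $D_i^2\le 2\langle\nabla F_i,u_i^k\rangle^2+\frac12 t_k^2L^2$. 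Integrating over $u_i^k$ via part~3 of Lemma~\ref{lemma1} ($\mathbb{E}_u[d\langle h,u\rangle^2]=\|h\|^2$) and then over $\xi_i^k$ via Assumption~\ref{ass3} ($\mathbb{E}_{\xi}\|\nabla F_i(x;\xi)\|^2\le(1+\sigma_0^2)\|\nabla f_i(x)\|^2+\sigma_1^2$) produces $\mathbb{E}\|\hat{\nabla} f_i\|^2\le 2d\big[(1+\sigma_0^2)\|\nabla f_i(x_i^k)\|^2+\sigma_1^2\big]+\frac12 d^2t_k^2L^2$.

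Finally I would convert each $\|\nabla f_i(x_i^k)\|^2$ using $L$-smoothness together with the dissimilarity Assumption~\ref{ass4}, namely $\|\nabla f_i(x_i^k)\|^2\le 4(1+\sigma_2^2)\|\nabla f(\bar x^k)\|^2+4\sigma_3^2+2L^2\|x_i^k-\bar x^k\|^2$, and substitute into $\frac1{n^2}\sum_i$; summing $\|x_i^k-\bar x^k\|^2$ over $i$ recovers $\|x^k-1_n\otimes\bar x^k\|^2$. Collecting the $\frac{d}{n}$-weighted contributions matches the $\frac{4dL^2(1+\sigma_0^2)}{n^2}$, $\frac{8d}{n}(1+\sigma_0^2)(1+\sigma_2^2)$, $\frac{8d}{n}(1+\sigma_0^2)\sigma_3^2$ and $\frac{2d}{n}\sigma_1^2$ terms, while the two smoothing-bias contributions $\frac{1}{2n}d^2t_k^2L^2$ and $2t_k^2L^2$ merge into the single $\frac12 L^2d^2t_k^2$ term that dominates in the regime of interest. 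I expect the variance term to be the main obstacle: one must handle the two independent sources of randomness in the correct order (integrate $\xi_i^k$ to restore Assumption~\ref{ass3}, integrate $u_i^k$ to reduce the $d^2\langle\cdot,u\rangle^2$ factor to a single $d$ via part~3 of Lemma~\ref{lemma1}) and justify applying the deterministic estimator bounds to the random realization $F_i(\cdot;\xi_i^k)$; the delicate bookkeeping is tracking whether each constant carries a factor $d$, a factor $1/n$, or neither, which is exactly what separates the dissimilarity-dependent variance terms from the smoothness-only bias terms.
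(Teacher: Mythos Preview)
Your bias--variance decomposition is correct and the argument is sound, but it differs from the paper's route. The paper does \emph{not} split into squared-mean plus variance at the outset. Instead it first separates the finite-difference remainder from the directional part, writing
\[
\hat{\nabla}\bar f = \frac{d}{n}\sum_i\Big(\tfrac{F_i(x_i^k+t_ku_i^k;\xi_i^k)-F_i(x_i^k-t_ku_i^k;\xi_i^k)}{2t_k}-\langle\nabla F_i,u_i^k\rangle\Big)u_i^k + \frac{d}{n}\sum_i\langle\nabla F_i,u_i^k\rangle u_i^k,
\]
and then uses the exact identity
\[
\mathbb{E}_{u^k}\Big\|\frac{d}{n}\sum_i\langle h_i,u_i^k\rangle u_i^k\Big\|^2=\frac{d-1}{n^2}\sum_i\|h_i\|^2+\Big\|\frac{1}{n}\sum_i h_i\Big\|^2
\]
(proved by expanding the cross terms via part~3 of Lemma~\ref{lemma1}). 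After this, expectation over $\xi^k$ brings in Assumption~\ref{ass3}, and the two resulting quantities $\sum_i\|\nabla f_i(x_i^k)\|^2$ and $\|\frac1n\sum_i\nabla f_i(x_i^k)\|^2$ are bounded exactly as you do. Your decomposition is arguably more modular (it isolates cleanly why the $1/n$ factor appears on the variance terms only), while the paper's identity is sharper because it avoids the extra $\|\cdot\|^2\le2\|\cdot\|^2+2\|\cdot\|^2$ splits you apply twice in the bias branch.

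One concrete loose end: your claim that the two smoothing contributions ``merge into the single $\frac12 L^2d^2t_k^2$'' is not a proof. Your route produces the $t_k^2$--coefficient $2L^2+\frac{d^2L^2}{2n}$, whereas the paper obtains exactly $\frac12 d^2L^2$ from $\frac{2d^2}{n}\cdot n\cdot\frac14 t_k^2L^2$ in one step. The inequality $2+\frac{d^2}{2n}\le\frac{d^2}{2}$ holds only when $d^2(n-1)\ge 4n$ (e.g.\ $d\ge3$ for $n\ge2$), which the lemma does not assume. To match the stated constant in full generality you would either need to tighten the bias estimate (e.g.\ bound $\|\frac1n\sum_i(\nabla f_{i,t_k}-\nabla f_i)\|$ more carefully) or state the lemma with a slightly larger $t_k^2$--coefficient; the paper's decomposition avoids this issue entirely because the only $t_k^2$ contribution arises from the finite-difference remainder before any averaging is undone.
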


\begin{proof}
Conditioned on $\xi^k$ and $\mathcal{F}_k$, a straightforward computation yields that
    \begin{align*}
    & \mathbb{E}_{u^k}\left\|\hat{\nabla} \bar{f}\left(x^k ; u^k, \xi^k\right)\right\|^2 \\
    = & \mathbb{E}_{u^k}\left\|\frac{1}{n} \sum_{i=1}^n d \frac{F_i\left(x_i^k+t_k u_i^k ; \xi_i^k\right)-F_i\left(x_i^k-t_k u_i^k ; \xi_i^k\right)}{2 t_k} u_i^k\right\|^2 \\
    = & \mathbb{E}_{u^k}\left\|\frac{d}{n} \sum_{i=1}^n \frac{F_i\left(x_i^k+t_k u_i^k ; \xi_i^k\right)-F_i\left(x_i^k-t_k u_i^k ; \xi_i^k\right)}{2 t_k} u_i^k-\left\langle\nabla F_i\left(x_i^k ; \xi_i^k\right), u_i^k\right\rangle u_i^k+\left\langle\nabla F_i\left(x_i^k ; \xi_i^k\right), u_i^k\right\rangle u_i^k\right\|^2 \\
    \leq & \frac{2 d^2}{n} \sum_{i=1}^n \mathbb{E}_{u^k}\left\|\frac{F_i\left(x_i^k+t_k u_i^k ; \xi_i^k\right)-F_i\left(x_i^k-t_k u_i^k ; \xi_i^k\right)}{2 t_k} u_i^k-\left\langle\nabla F_i\left(x_i^k ; \xi_i^k\right), u_i^k\right\rangle u_i^k\right\|^2 \\
    & +2 \mathbb{E}_{u^k}\left\|\frac{d}{n} \sum_{i=1}^n\left\langle\nabla F_i\left(x_i^k ; \xi_i^k\right), u_i^k\right\rangle u_i^k\right\|^2 \\
    \leq & \frac{2 d^2}{n} \cdot n \cdot \frac{1}{4} t_k^2 L^2+\frac{2(d-1)}{n^2} \sum_{i=1}^n\left\|\nabla F_i\left(x_i^k ; \xi_i^k\right)\right\|^2+2\left\|\frac{1}{n} \sum_{i=1}^n \nabla F_i\left(x_i^k ; \xi_i^k\right)\right\|^2, \numberthis \label{eq tmp1}
    \end{align*}
    where the last inequality is due to Lemma \ref{lemma1} and the fact that 
    \begin{align*}
        \mathbb{E}_{u^k}\left\|\frac{d}{n} \sum_{i=1}^n\left\langle\nabla F_i\left(x_i^k ; \xi_i^k\right), u_i^k\right\rangle u_i^k\right\|^2 \leq \frac{(d-1)}{n^2} \sum_{i=1}^n\left\|\nabla F_i\left(x_i^k ; \xi_i^k\right)\right\|^2+\left\|\frac{1}{n} \sum_{i=1}^n \nabla F_i\left(x_i^k ; \xi_i^k\right)\right\|^2.
    \end{align*}
    The fact can be proved as follows:
    \begin{align*}
    & \mathbb{E}_{u^k}\left\|\frac{d}{n} \sum_{i=1}^n\left\langle\nabla F_i\left(x_i^k ; \xi_i^k\right), u_i^k\right\rangle u_i^k\right\|^2 \\
    = & \frac{d^2}{n^2} \mathbb{E}_{u^k}\left[\sum_{i=1}^n\left|\left\langle\nabla F_i\left(x_i^k ; \xi_i^k\right), u_i^k\right\rangle\right|^2+\sum_{i \neq j}\left\langle\nabla F_i\left(x_i^k ; \xi_i^k\right), u_i^k\right\rangle\left\langle\nabla F_j\left(x_j^k ; \xi_j^k\right), u_j^k\right\rangle \cdot\left\langle u_i^k, u_j^k\right\rangle\right] \\
    = & \frac{d}{n^2} \sum_{i=1}^n\left\|\nabla F_i\left(x_i^k ; \xi_i^k\right)\right\|^2+\frac{1}{n^2} \sum_{i \neq j}\left\langle\nabla F_i\left(x_i^k ; \xi_i^k\right), \nabla F_j\left(x_j^k ; \xi_j^k\right)\right\rangle \\
    = & \frac{(d-1)}{n^2} \sum_{i=1}^n\left\|\nabla F_i\left(x_i^k ; \xi_i^k\right)\right\|^2+\frac{1}{n^2}\left\|\sum_{i=1}^n \nabla F_i\left(x_i^k ; \xi_i^k\right)\right\|^2 \\
    = & \frac{(d-1)}{n^2} \sum_{i=1}^n\left\|\nabla F_i\left(x_i^k ; \xi_i^k\right)\right\|^2+\left\|\frac{1}{n} \sum_{i=1}^n \nabla F_i\left(x_i^k ; \xi_i^k\right)\right\|^2,
    \end{align*}
    where the second line is due to Lemma \ref{lemma1}.

    Thus, we have
    \begin{align*}
    & \mathbb{E}\left[\left\|\hat{\nabla} \bar{f}\left(x^k ; u^k, \xi^k\right)\right\|^2 \mid \mathcal{F}_k\right]\\
    = & \mathbb{E}_{\xi^k} \mathbb{E}_{u^k}\left\|\hat{\nabla} \bar{f}\left(x^k ; u^k, \xi^k\right)\right\|^2 \\
    \leq & \frac{1}{2} L^2 d^2 t_k^2+\frac{2(d-1)}{n^2} \sum_{i=1}^n \mathbb{E}_{\xi^k}\left\|\nabla F_i\left(x_i^k, \xi_i^k\right)\right\|^2+2 \mathbb{E}_{\xi^k}\left\|\frac{1}{n} \sum_{i=1}^n \nabla F_i\left(x_i^k, \xi_i^k\right)\right\|^2 \\
    \stackrel{(a)}{=}& \frac{1}{2} L^2 d^2 t_k^2+\frac{2(d-1)}{n^2} \sum_{i=1}^n \mathbb{E}_{\xi^k}\left\|\nabla F_i\left(x_i^k, \xi_i^k\right)-\nabla f_i\left(x_i^k\right)\right\|^2+\frac{2(d-1)}{n^2} \sum_{i=1}^n\left\|\nabla f_i\left(x_i^k\right)\right\|^2 \\
    & +2 \mathbb{E}_{\xi^k} \| \frac{1}{n} \sum_{i=1}^n \nabla F_i\left(x_i^k; \xi_i^k\right)-\frac{1}{n} \sum_{i=1}^n \nabla f_i\left(x_i^k\right)\left\|^2+2\right\| \frac{1}{n} \sum_{i=1}^n \nabla f_i\left(x_i^k\right) \|^2 \\
    \stackrel{(b)}{\leq} & \frac{1}{2} L^2 d^2 t_k^2+\frac{2(d-1)}{n^2} \sum_{i=1}^n\left(\sigma_0^2\left\|\nabla f_i\left(x_i^k\right)\right\|^2+\sigma_1^2\right)+\frac{2(d-1)}{n^2} \sum_{i=1}^n\left\|\nabla f_i\left(x_i^k\right)\right\|^2 \\
    & +\frac{2}{n^2} \sigma_0^2 \sum_{i=1}^n\left\|\nabla f_i\left(x_i^k\right)\right\|^2+\frac{2}{n} \sigma_1^2+2\left\|\frac{1}{n} \sum_{i=1}^n \nabla f_i\left(x_i^k\right)\right\|^2 \\
    \leq & \frac{1}{2} L^2 d^2 t_k^2+\frac{2 d}{n^2}\left(1+\sigma_0^2\right)\left\|\nabla f^k\right\|^2+\frac{2 d}{n} \sigma_1^2+2\left\|\frac{1}{n} \sum_{i=1}^n \nabla f_i\left(x_i^k\right)\right\|^2 \\
    \stackrel{(c)}{\leq} & \frac{1}{2} L^2 d^2 t_k^2+\frac{2 d}{n^2}\left(1+\sigma_0^2\right)\left(2 L^2\left\|x^k-1_n \otimes \bar{x}^k\right\|^2+4 n\left(1+\sigma_2^2\right)\left\|\nabla f\left(\bar{x}^k\right)\right\|^2+4 n \sigma_3^2\right) \\
    & +4\left\|\nabla f\left(\bar{x}^k\right)\right\|^2+\frac{4 L^2}{n}\left\|x^k-1_n \otimes \bar{x}^k\right\|^2+\frac{2 d}{n} \sigma_1^2 \\
    = & \frac{1}{2} L^2 d^2 t_k^2+\left(\frac{4 d L^2\left(1+\sigma_0^2\right)}{n^2}+\frac{4 L^2}{n}\right)\left\|x^k-1_n \otimes \bar{x}^k\right\|^2 \\
    & +\left(\frac{8 d}{n}\left(1+\sigma_0^2\right)\left(1+\sigma_2^2\right)+4\right)\left\|\nabla f\left(\bar{x}^k\right)\right\|^2+\frac{8 d}{n}\left(1+\sigma_0^2\right) \sigma_3^2+\frac{2 d}{n} \sigma_1^2, 
    \end{align*}
    where step (a) and step (b) follow from Assumption \ref{ass3}, step (c) uses the fact
    \begin{align*}
    \|\nabla f^k\|_2^2 &\leq 2L^2\|x^k - 1_n\otimes \bar{x}^k\|^2 + 4n(1+\sigma_2^2)\|\nabla f(\bar{x}^k)\|^2 + 4n\sigma_3^2,\\
    \left\|\frac{1}{n}\sum_{i=1}^n \nabla f_i(x_i^k)\right\|^2&\leq \frac{2L^2}{n}\|x^k - 1_n\otimes \bar{x}^k\|^2   + 2\left\| \nabla f(\bar{x}^k)\right\|^2.\numberthis \label{tmp5}
    \end{align*}
Moreover, the fact used in step (c) can be proved as follows:
\begin{align*}
    \|\nabla f^k\|^2 &= \sum_{i=1}^n \|\nabla f_i(x_i^k)\|^2\\
    &\leq 2\sum_{i=1}^n \|\nabla f_i(x_i^k) - \nabla f_i(\bar{x}^k)\|^2 + 2\sum_{i=1}^n \|\nabla f_i(\bar{x}^k)\|^2 \\
    &\leq 2\sum_{i=1}^n \|\nabla f_i(x_i^k) - \nabla f_i(\bar{x}^k)\|^2 + 4\sum_{i=1}^n \|\nabla f_i(\bar{x}^k) - \nabla f(\bar{x}^k)\|^2 + 4\sum_{i=1}^n \|\nabla f(\bar{x}^k)\|^2\\
    &\stackrel{(a)}{\leq} 2L^2\sum_{i=1}^n \|x_i^k - \bar{x}^k\|^2 + 4\sum_{i=1}^n\left(\sigma_2^2 \|\nabla f(\bar{x})\|^2 + \sigma_3^2\right) + 4\sum_{i=1}^n \|\nabla f(\bar{x}^k)\|^2\\
    &=2L^2\|x^k - 1_n\otimes \bar{x}^k\|^2 + 4n(1+\sigma_2^2)\|\nabla f(\bar{x}^k)\|^2 + 4n\sigma_3^2,
\end{align*}
where step (a) follows from Assumption \ref{ass2} and Assumption \ref{ass4}. Using the same argument as above, one can obtain
\begin{align*}
    \left\|\frac{1}{n}\sum_{i=1}^n \nabla f_i(x_i^k)\right\|^2 &\leq 2\left\|\frac{1}{n}\sum_{i=1}^n \nabla f_i(x_i^k) - \nabla f(\bar{x}^k)\right\|^2 + 2\|\nabla f(\bar{x}^k)\|^2\\
    &=2\left\|\frac{1}{n}\sum_{i=1}^n \left(\nabla f_i(x_i^k) - \nabla f_i(\bar{x}^k) \right)\right\|^2 + 2\|\nabla f(\bar{x}^k)\|^2\\
    &\stackrel{(a)}{\leq}\frac{2}{n}\sum_{i=1}^n\| \nabla f_i(x_i^k) - \nabla f_i(\bar{x}^k) \|^2 +2\left\| \nabla f(\bar{x}^k)\right\|^2\\
    &\leq \frac{2L^2}{n}\sum_{i=1}^n\|x_i^k - \bar{x}^k\|^2  + 2\left\| \nabla f(\bar{x}^k)\right\|^2\\
    &=\frac{2L^2}{n}\|x^k - 1_n\otimes \bar{x}^k\|^2   + 2\left\| \nabla f(\bar{x}^k)\right\|^2,
\end{align*}
where step (a) is due to the Jensen's inequality. \end{proof}

\begin{lemma}
\label{lemma3}
Consider the same setup as stated in Lemma \ref{lemma2}. 
One has
    \begin{align*}
\mathbb{E}\left[\left\|\hat{\nabla} f\left(x^k ; u^k, \xi^k\right)\right\|^2 \mid \mathcal{F}_k\right] \leq &\frac{1}{2} n d^2 L^2 t_k^2+6 d L^2\left(1+\sigma_0^2\right)\left\|x^k-1_n \otimes \bar{x}^k\right\|^2\\
    &\quad +6 n d\left(1+\sigma_0^2\right)\left(1+\sigma_2^2\right)\left\|\nabla f\left(\bar{x}^k\right)\right\|^2+6 n d\left(1+\sigma_0^2\right) \sigma_3^2+2 n d \sigma_1^2.
    \end{align*}
\end{lemma}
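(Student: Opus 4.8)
The plan is to exploit that, unlike Lemma \ref{lemma2} which controls the squared norm of the \emph{average} estimator $\hat{\nabla}\bar{f}$, here the quantity of interest is the squared norm of the \emph{stacked} estimator, so that
\begin{equation*}
\left\|\hat{\nabla} f\left(x^k ; u^k, \xi^k\right)\right\|^2 = \sum_{i=1}^n \left\|\hat{\nabla} f_i\left(x_i^k ; u_i^k, \xi_i^k\right)\right\|^2 .
\end{equation*}
Because this is a sum of per-node squared norms rather than the norm of a sum, the cross terms between distinct nodes that appeared in Lemma \ref{lemma2} never arise, and the estimate decouples completely across $i$. First I would therefore fix a node $i$, condition on $\xi^k$ and $\mathcal{F}_k$, and bound $\mathbb{E}_{u_i^k}\|\hat{\nabla} f_i(x_i^k;u_i^k,\xi_i^k)\|^2$ on its own.

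For the per-node term I would add and subtract the directional derivative $d\langle\nabla F_i(x_i^k;\xi_i^k),u_i^k\rangle u_i^k$ and split using $\|a+b\|^2\le 2\|a\|^2+2\|b\|^2$. Since $u_i^k\in\mathbb{S}_{d-1}$ has unit norm, the first (finite-difference) piece is controlled by part 1 of Lemma \ref{lemma1} applied to the $L$-smooth map $F_i(\cdot\,;\xi_i^k)$, which bounds the scalar error by $\tfrac12 t_k L$ and hence contributes at most $\tfrac12 d^2 t_k^2 L^2$. The second piece equals $d^2\,\mathbb{E}_{u_i^k}[\langle\nabla F_i,u_i^k\rangle^2]$, which by part 3 of Lemma \ref{lemma1} is exactly $d\|\nabla F_i(x_i^k;\xi_i^k)\|^2$, so with the prefactor $2$ it contributes $2d\|\nabla F_i(x_i^k;\xi_i^k)\|^2$. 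Summing over the $n$ nodes gives the conditional-on-$\xi^k$ bound $\tfrac12 n d^2 t_k^2 L^2 + 2d\sum_{i=1}^n\|\nabla F_i(x_i^k;\xi_i^k)\|^2$.

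Next I would take the expectation over $\xi^k$ and invoke Assumption \ref{ass3} in its bias--variance form, $\mathbb{E}_{\xi^k}\|\nabla F_i(x_i^k;\xi_i^k)\|^2 \le (1+\sigma_0^2)\|\nabla f_i(x_i^k)\|^2+\sigma_1^2$, turning the sum into $2d(1+\sigma_0^2)\|\nabla f^k\|^2 + 2nd\sigma_1^2$. The final step is to replace $\|\nabla f^k\|^2$ by consensus error and the true gradient at the average iterate. Here, rather than the $2$-and-$4$ split used inside Lemma \ref{lemma2}, I would apply the three-way decomposition $\|\nabla f_i(x_i^k)\|^2\le 3\|\nabla f_i(x_i^k)-\nabla f_i(\bar{x}^k)\|^2+3\|\nabla f_i(\bar{x}^k)-\nabla f(\bar{x}^k)\|^2+3\|\nabla f(\bar{x}^k)\|^2$, and then use $L$-smoothness (Assumption \ref{ass2}) on the first term and the dissimilarity bound (Assumption \ref{ass4}) on the second. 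Summing over $i$ yields $\|\nabla f^k\|^2\le 3L^2\|x^k-1_n\otimes\bar{x}^k\|^2+3n(1+\sigma_2^2)\|\nabla f(\bar{x}^k)\|^2+3n\sigma_3^2$; substituting this in produces precisely the coefficients $6dL^2(1+\sigma_0^2)$, $6nd(1+\sigma_0^2)(1+\sigma_2^2)$ and $6nd(1+\sigma_0^2)\sigma_3^2$ claimed in the statement.

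This argument is essentially a simplified version of Lemma \ref{lemma2}, so I do not anticipate a genuine obstacle. The only point requiring care is the bookkeeping of constants: one must use the three-way (rather than two-way) Young-type split of $\|\nabla f_i(x_i^k)\|^2$ to land on the symmetric factor $6$ in all three terms, and one must keep track throughout that $\|u_i^k\|=1$ so that the direction vector drops out of both the finite-difference estimate and the inner-product estimate.
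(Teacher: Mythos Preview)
Your proposal is correct and follows essentially the same route as the paper's proof: decouple the stacked norm into per-node terms, add and subtract $d\langle\nabla F_i,u_i^k\rangle u_i^k$, apply Lemma~\ref{lemma1} to get $\tfrac12 n d^2 L^2 t_k^2 + 2d\sum_i\mathbb{E}_{\xi^k}\|\nabla F_i(x_i^k;\xi_i^k)\|^2$, use Assumption~\ref{ass3} to pass to $2d(1+\sigma_0^2)\|\nabla f^k\|^2+2nd\sigma_1^2$, and then finish with the three-way split \eqref{tmp3}. The only cosmetic difference is that the paper writes out the bias--variance identity $\mathbb{E}_{\xi^k}\|\nabla F_i\|^2=\|\nabla f_i\|^2+\mathbb{E}_{\xi^k}\|\nabla F_i-\nabla f_i\|^2$ explicitly before invoking Assumption~\ref{ass3}, whereas you fold the two steps together.
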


\begin{proof}
Recall the definition of $\hat{\nabla} f\left(x^k ; u^k, \xi^k\right)$ in \eqref{notation}. A direct computation yields that
    \begin{align*}
    & \mathbb{E}\left[\left\|\hat{\nabla} f\left(x^k ; u^k, \xi^k\right)\right\|^2 \mid \mathcal{F}_k\right] \\
    = & \mathbb{E}_{\xi^k} \mathbb{E}_{u^k}\left[\sum_{i=1}^n\left\|\hat{\nabla} f_i\left(x_i^k ; u_i^k, \xi_i^k\right)\right\|^2\right] \\
    = & \mathbb{E}_{\xi^k} \mathbb{E}_{u^k} \sum_{i=1}^n\left\|d \frac{F_i\left(x_i^k+t_k u_i^k ; \xi_i^k\right)-F_i\left(x_i^k-t_k u_i^k ; \xi_i^k\right)}{2 t_k} u_i^k\right\|^2 \\
    = & \mathbb{E}_{\xi^k} \mathbb{E}_{u^k} \sum_{i=1}^n\left\|d \frac{F_i\left(x_i^k+t_k u_i^k ; \xi_i^k\right)-F_i\left(x_i^k-t_k u_i^k ; \xi_i^k\right)}{2 t_k} u_i^k-d\left\langle\nabla F_i\left(x_i^k ; \xi_i^k\right), u_i^k \right\rangle u_i^k+d \left\langle \nabla F_i\left(x_i^k ; \xi_i^k\right), u_i^k\right\rangle u_i^k\right\|^2 \\
    \leq & 2d^2 \mathbb{E}_{\xi^k} \mathbb{E}_{u^k} \sum_{i=1}^n \left\|\frac{F_i\left(x_i^k+t_k u_i^k ; \xi_i^k\right)-F_i\left(x_i^k-t_k u_i^k ; \xi_i^k\right)}{2 t_k} u_i^k-\left\langle \nabla F_i\left(x_i^k ; \xi_i^k\right), u_i^k \right\rangle u_i^k\right\|^2 \\
    & +2 \mathbb{E}_{\xi^k} \mathbb{E}_{u^k} \sum_{i=1}^n \| d\left\langle\nabla F_i\left(x_i^k ; \xi_i^k\right), u_i^k \right\rangle u_i^k \|^2 \\
    \stackrel{(a)}{\leq} & 2d^2\cdot \frac{1}{4}nL^2t_k^2 +2 d \sum_{i=1}^n \mathbb{E}_{\xi^k}\left\|\nabla F_i\left(x_i^k ; \xi_i^k\right)\right\|^2 \\
    \stackrel{(b)}{=} & \frac{1}{2} n d^2 L^2 t_k^2+2 d \sum_{i=1}^n\left\|\nabla f_i\left(x_i^k\right)\right\|^2+2 d \sum_{i=1}^n\left\|\nabla F_i\left(x_i^k ; \xi_i^k\right)-\nabla f_i\left(x_i^k\right)\right\|^2 \\
    \stackrel{(c)}{\leq} &\frac{1}{2} n d^2 L^2 t_k^2+2 d \sum_{i=1}^n\left\|\nabla f_i\left(x_i^k\right)\right\|^2+2 d \sum_{i=1}^n\left( \sigma_0^2 \|\nabla f_i(x_i^k)\|^2 + \sigma_1^2\right)\\
    =& \frac{1}{2} n d^2 L^2 t_k^2+2 d\left(1+\sigma_0^2\right) \sum_{i=1}^n\left\|\nabla f_i\left(x_i^k\right)\right\|^2+2 n d \sigma_1^2, \numberthis \label{eq tmp2}
    \end{align*}
    where step (a) is due to Lemma \ref{lemma1}, step (b) uses the fact that $\mathbb{E}_{\xi_i^k}\left[\langle \nabla F_i(x_i^k; \xi_i^k) - \nabla f_i(x_i^k), \nabla f_i(x_i^k)\rangle\right] = 0$, and step (c) follows from Assumption \ref{ass3}.
    Moreover, one can show that
    \begin{align*}
    \sum_{i=1}^n\left\|\nabla f_i\left(x_i^k\right)\right\|^2 & =\sum_{i=1}^n\left\|\nabla f_i\left(x_i^k\right)-\nabla f_i\left(\bar{x}^k\right)+\nabla f_i\left(\bar{x}^k\right)-\nabla f\left(\bar{x}^k\right)+\nabla f\left(\bar{x}^k\right)\right\|^2 \\
    & \leq 3 \sum_{i=1}^n\left\|\nabla f_i\left(x_i^k\right)-\nabla f_i\left(\bar{x}^k\right)\right\|^2+3 \sum_{i=1}^n\left\|\nabla f_i\left(\bar{x}^k\right)-\nabla f\left(\bar{x}^k\right)\right\|^2+3 n\left\|\nabla f\left(\bar{x}^k\right)\right\|^2 \\
    & \leq 3 L^2\left\|x^k-1_n \otimes \bar{x}^k\right\|^2+3 \sum_{i=1}^n\left(\sigma_2^2\left\|\nabla f\left(\bar{x}^k\right)\right\|^2+\sigma_3^2\right)+3 n\left\|\nabla f\left(\bar{x}^k\right)\right\|^2 \\
    & =3 L^2\left\|x^k-1_n \otimes \bar{x}^k\right\|^2+3 n\left(1+\sigma_2^2\right)\left\|\nabla f\left(\bar{x}^k\right)\right\|^2+3 n \sigma_3^2,\numberthis\label{tmp3}
    \end{align*}
    where the third line is due to Assumption \ref{ass2} and Assumption \ref{ass4}. Plugging \eqref{tmp3} into \eqref{eq tmp2}, we get
    \begin{align*}
\mathbb{E}\left[\left\|\hat{\nabla} f\left(x^k ; u^k, \xi^k\right)\right\|^2 \mid \mathcal{F}_k\right] \leq & \frac{1}{2} n d^2 L^2 t_k^2+6 d L^2\left(1+\sigma_0^2\right)\left\|x^k-1_n \otimes \bar{x}^k\right\|^2\\
&\quad+6 n d\left(1+\sigma_0^2\right)\left(1+\sigma_2^2\right)\left\|\nabla f\left(\bar{x}^k\right)\right\|^2 +6 n d\left(1+\sigma_0^2\right) \sigma_3^2+2 n d \sigma_1^2.
    \end{align*}

\vspace{-25pt}
\end{proof}

\begin{lemma}
\label{lemma4}
Consider the sequence $\left\{m^k\right\}$ generated by Algorithm 1. Under Assumption \ref{ass2}, \ref{ass3}, and \ref{ass4}, for $k \geq 1$, we have
    \begin{align*}
    \mathbb{E}\left\|\bar{m}^k-\nabla \bar{f}^k\right\|^2
    \leq &\; \frac{1+(1-\beta)^2}{2} \mathbb{E}\left\|\bar{m}^{k-1}-\nabla \bar{f}^{k-1}\right\|^2+\left(\frac{32 \beta^2 d}{n}\left(1+\sigma_0^2\right)\left(1+\sigma_2^2\right)+24 \beta^2\right) \mathbb{E}\left\|\nabla f\left(\bar{x}^k\right)\right\|^2 \\
    & +\frac{36\alpha^2(1-\beta)^2 d L^2}{n} \mathbb{E}\left\|\bar{m}^{k-1}\right\|^2 +\frac{36(1-\beta)^2 d L^2}{n^2} \mathbb{E}\left\|x^{k-1}-1_n \otimes \bar{x}^{k-1}\right\|^2 \\
    & +\left(\frac{16 \beta^2 d L^2\left(1+\sigma_0^2\right)}{n^2}+\frac{24 \beta^2 L^2}{n}+\frac{36(1-\beta)^2 d L^2}{n^2}\right) \mathbb{E}\left\|x^k-1_n \otimes \bar{x}^k\right\|^2 \\
    & +\left(2 \beta^2 L^2 d^2+ 16(1-\beta)^2 L^2 +\frac{6(1-\beta)^2 d^2 L^2}{n}+\frac{2(1-\beta)^2(2-\beta) L^2}{\beta}\right) t_k^2 \\
    & +\frac{8 \beta^2 d}{n} \sigma_1^2+\frac{32 \beta^2 d}{n}\left(1+\sigma_0^2\right) \sigma_3^2.
    \end{align*}
\end{lemma}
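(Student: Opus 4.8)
The plan is to turn the momentum update into a one-step recursion for the averaged error $\bar m^k-\nabla\bar f^k$ by exploiting its STORM-type structure. First I would average the update over the nodes, using $\bar m^k=\tfrac1n\sum_i m_i^k$ and $\hat\nabla\bar f(x^k;u^k,\xi^k)=\tfrac1n\sum_i\hat\nabla f_i(x_i^k;u_i^k,\xi_i^k)$, to get
\[
\bar m^k=\beta\,\hat\nabla\bar f(x^k;u^k,\xi^k)+(1-\beta)\bigl(\bar m^{k-1}+\hat\nabla\bar f(x^k;u^k,\xi^k)-\hat\nabla\bar f(x^{k-1};u^k,\xi^k)\bigr).
\]
Subtracting $\nabla\bar f^k$ and inserting $\pm(1-\beta)\nabla\bar f^{k-1}$ yields $\bar m^k-\nabla\bar f^k=(1-\beta)(\bar m^{k-1}-\nabla\bar f^{k-1})+T_1+T_2$, where $T_1=\beta(\hat\nabla\bar f(x^k;u^k,\xi^k)-\nabla\bar f^k)$ is an SGD-type term and $T_2=(1-\beta)\bigl(\hat\nabla\bar f(x^k;u^k,\xi^k)-\hat\nabla\bar f(x^{k-1};u^k,\xi^k)-(\nabla\bar f^k-\nabla\bar f^{k-1})\bigr)$ is a SARAH-type correction reusing the same $u^k,\xi^k$ at both points.

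Next I would take $\mathbb E[\cdot\mid\mathcal F_k]$ and expand the square. Since $\bar m^{k-1}-\nabla\bar f^{k-1}$ is $\mathcal F_k$-measurable, the only surviving cross term is $2(1-\beta)\langle\bar m^{k-1}-\nabla\bar f^{k-1},\mathbb E[T_1+T_2\mid\mathcal F_k]\rangle$. This is where the biasedness of the zeroth-order estimator enters: by Lemma \ref{lemma1} one has $\|\nabla f_i-\nabla f_{i,t_k}\|\le t_kL$, so the conditional mean $\mathbb E[T_1+T_2\mid\mathcal F_k]$ is a smoothing bias of norm $\mathcal O(t_kL)$ rather than zero. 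I would absorb this cross term with Young's inequality $2\langle a,b\rangle\le\varepsilon\|a\|^2+\varepsilon^{-1}\|b\|^2$, choosing $\varepsilon$ so that $(1+\varepsilon)(1-\beta)^2=\tfrac{1+(1-\beta)^2}{2}$; this is precisely what inflates the contraction coefficient $(1-\beta)^2$ to $\tfrac{1+(1-\beta)^2}{2}$ and generates the $\tfrac{2(1-\beta)^2(2-\beta)}{\beta}L^2t_k^2$ contribution.

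For the SGD term I would bound $\mathbb E\|T_1\|^2=\beta^2\,\mathbb E\|\hat\nabla\bar f(x^k;u^k,\xi^k)-\nabla\bar f^k\|^2$ by applying Lemma \ref{lemma2} together with $\|\nabla\bar f^k\|^2\le\tfrac{2L^2}{n}\|x^k-1_n\otimes\bar x^k\|^2+2\|\nabla f(\bar x^k)\|^2$ from \eqref{tmp5}. This immediately supplies the $\beta^2$-scaled terms in $\|\nabla f(\bar x^k)\|^2$, the consensus error $\|x^k-1_n\otimes\bar x^k\|^2$, and $t_k^2,\sigma_1^2,\sigma_3^2$, matching the corresponding lines of the claimed bound.

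The hard part will be the variance $\mathbb E\|T_2\|^2$ of the SARAH correction, which must be controlled pathwise so as not to reintroduce the sampling variance. For each node I would rewrite $\hat\nabla f_i(x_i^k;u_i^k,\xi_i^k)-\hat\nabla f_i(x_i^{k-1};u_i^k,\xi_i^k)$ as $d\langle\nabla F_i(x_i^k;\xi_i^k)-\nabla F_i(x_i^{k-1};\xi_i^k),u_i^k\rangle u_i^k$ up to a finite-difference error of order $t_kL$ (Lemma \ref{lemma1}), then apply the moment identity $\mathbb E_u[d^2\langle h,u\rangle^2]=d\|h\|^2$ (Lemma \ref{lemma1}) together with the $L$-smoothness of $F_i(\cdot;\xi_i)$ in Assumption \ref{ass2}, giving $\|\nabla F_i(x_i^k;\xi_i^k)-\nabla F_i(x_i^{k-1};\xi_i^k)\|\le L\|x_i^k-x_i^{k-1}\|$ for almost every $\xi_i$. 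This pathwise smoothness is exactly what keeps the bound free of $\sigma_1^2,\sigma_3^2$ and realizes the variance reduction; the independence of the $u_i^k$ across nodes then produces the $1/n$ factor, so that $\mathbb E\|T_2\|^2\lesssim(1-\beta)^2\tfrac{dL^2}{n^2}\,\mathbb E\|x^k-x^{k-1}\|^2$ plus $(1-\beta)^2$-scaled $t_k^2$ terms. Finally I would decompose the drift via $\|x^k-x^{k-1}\|^2\le 3\|x^k-1_n\otimes\bar x^k\|^2+3\|x^{k-1}-1_n\otimes\bar x^{k-1}\|^2+3n\alpha^2\|\bar m^{k-1}\|^2$, using $\bar x^k-\bar x^{k-1}=-\alpha\bar m^{k-1}$ from the gradient-tracking identity $\bar g^{k}=\bar m^{k-1}$, which yields the $\|\bar m^{k-1}\|^2$ term, the two consensus-error terms, and the remaining $t_k^2$ terms. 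Collecting the contraction, cross-term, $T_1$, and $T_2$ estimates and taking total expectation gives the stated inequality; the only delicate points are the pathwise smoothness argument for $T_2$ and the choice of $\varepsilon$ in the cross-term step.
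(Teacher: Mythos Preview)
Your proposal is correct and mirrors the paper's proof essentially step for step: the same STORM decomposition $\bar m^k-\nabla\bar f^k=(1-\beta)(\bar m^{k-1}-\nabla\bar f^{k-1})+\beta\bar v_k+(1-\beta)\bar r_k$, the same Young-inequality treatment of the cross term (your choice of $\varepsilon$ is exactly the paper's $a=\beta(2-\beta)/2$ in disguise), Lemma~\ref{lemma2} plus \eqref{tmp5} for the SGD term, the pathwise $L$-smoothness of $F_i(\cdot;\xi_i)$ together with Lemma~\ref{lemma1} for the SARAH term, and the three-way split of $\|x^k-x^{k-1}\|^2$ via $\bar x^k-\bar x^{k-1}=-\alpha\bar m^{k-1}$. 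The only step you leave implicit is the trivial $\|T_1+T_2\|^2\le 2\|T_1\|^2+2\|T_2\|^2$, which accounts for the factors of $2$ in the final constants.
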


\begin{proof}
Following the momentum update in Algorithm \ref{algo}, we have
\begin{align*}
\bar{m}^k - \nabla \bar{f}^k&=(1-\beta) (\bar{m}^{k-1} - \nabla \bar{f}^{k-1}) + \beta \underbrace{\left(  \frac{1}{n}\sum_{i=1}^n \hat{\nabla}f_i(x_i^k; u_i^k, \xi_i^k) - \nabla \bar{f}^k\right)}_{:=\bar{v}_k} \\
&\quad + (1-\beta) \underbrace{\left( \frac{1}{n}\sum_{i=1}^n \left( \hat{\nabla} f_i\left(x_i^k ; u_i^k, \xi_i^k\right)-\hat{\nabla} f_i\left(x_i^{k-1} ; u_i^k, \xi_i^k\right) \right) - \left( \nabla \bar{f}^k - \nabla \bar{f}^{k-1}\right)  \right)}_{:=\bar{r}_k}.
\end{align*}
Thus it can be seen that
    \begin{align*}
\mathbb{E}\left[\left\|\bar{m}^k-\nabla \bar{f}^k\right\|^2 \mid \mathcal{F}_k\right]= &\mathbb{E}\left[\left\|(1-\beta)\left(\bar{m}^{k-1}-\nabla \bar{f}^{k-1}\right)+\beta \bar{v}_k+(1-\beta) \bar{r}_k\right\|^2 \mid \mathcal{F}_k\right] \\
    = &\; (1-\beta)^2 \mathbb{E}\left[\left\|\bar{m}^{k-1}-\nabla \bar{f}^{k-1}\right\|^2 \mid \mathcal{F}_k\right]+\mathbb{E}\left[\left\|\beta \bar{v}_k+(1-\beta) \bar{r}_k\right\|^2 \mid \mathcal{F}_k\right] \\
    & +2 \mathbb{E}\left[\left\langle\beta \bar{v}_k+(1-\beta) \bar{r}_k,(1-\beta)\left(\bar{m}^{k-1}-\nabla \bar{f}^{k-1}\right)\right\rangle \mid \mathcal{F}_k\right]. \numberthis\label{4.1}
    \end{align*}
Notice that
    \begin{align*}
    & 2 \mathbb{E}\left[\left\langle\beta \bar{v}_k+(1-\beta) \bar{r}_k,(1-\beta)\left(\bar{m}^{k-1}-\nabla \bar{f}^{k-1}\right)\right\rangle \mid \mathcal{F}_k\right] \\
    = &\; 2 \mathbb{E}_{u^k, \xi^k}\left\langle\beta \bar{v}_k+(1-\beta) \bar{r}_k,(1-\beta)\left(\bar{m}^{k-1}-\nabla \bar{f}^{k-1}\right)\right\rangle \\
    \stackrel{(a)}{=}&\; 2\left\langle \frac{1}{n} \sum_{i=1}^n\left(\nabla f_{i, t_k}\left(x_i^k\right)-\nabla f_i\left(x_i^k\right)\right)-(1-\beta) \frac{1}{n} \sum_{i=1}^n\left(\nabla f_{i, t_k}\left(x_i^{k-1}\right)-\nabla f_i\left(x_i^{k-1}\right)\right),(1-\beta)\left(\bar{m}^{k-1}-\nabla \bar{f}^{k-1}\right)\right\rangle \\
    \leq &\; 2\left\|\frac{1}{n} \sum_{i=1}^n\left(\nabla f_{i, t_k}\left(x_i^k\right)-\nabla f_i\left(x_i^k\right)\right)-(1-\beta) \frac{1}{n} \sum_{i=1}^n\left(\nabla f_{i, t_k}\left(x_i^{k-1}\right)-\nabla f_i\left(x_i^{k-1}\right)\right)\right\| \cdot\left\|(1-\beta)\left(\bar{m}^{k-1}-\nabla \bar{f}^{k-1}\right)\right\| \\
    \stackrel{(b)}{\leq} &\; \frac{2(1-\beta)^2}{\beta(2-\beta)} \left\| \frac{1}{n} \sum_{i=1}^n \left( f_{i, t_k}\left(x_i^k\right)-\nabla f_i\left(x_i^k\right) \right)
    -(1-\beta) \frac{1}{n} \sum_{i=1}^n\left(\nabla f_{i, t_k}\left(x_i^{k-1}\right)-\nabla f_i\left(x_i^{k-1}\right)\right) \right\|^2 \\
    & + \frac{\beta(2-\beta)}{2}\left\|\bar{m}^{k-1}-\nabla \bar{f}^{k-1}\right\|^2\\
    \stackrel{(c)}{\leq}&\; \frac{\beta(2-\beta)}{2}\left\|\bar{m}^{k-1}-\nabla \bar{f}^{k-1}\right\|^2+\frac{2(1-\beta)^2(2-\beta)}{\beta} L^2 t_k^2, \numberthis\label{4.2}
    \end{align*}
    where step (a) is due to Lemma \ref{lemma1}, i.e., $\mathbb{E}_{u^k_i, \xi_i^k} \left[\hat{\nabla} f_i(x_i^k; u_i^k, \xi_i^k) \right] = \nabla f_{i, t_k}(x_i^k)$, step (b) uses the elementary inequality that $2xy\leq \frac{1}{a}x^2 + a y^2$ with $a = \beta(2-\beta)/2$, and step (c) follows from the fact that 
    \begin{align*}
        \left\|\frac{1}{n} \sum_{i=1}^n\left(\nabla f_{i, t_k}\left(x_i^k\right)-\nabla f_i\left(x_i^k\right)\right)-(1-\beta) \frac{1}{n} \sum_{i=1}^n\left(\nabla f_{i, t_k}\left(x_i^{k-1}\right)-\nabla f_i\left(x_i^{k-1}\right)\right)\right\| \leq (2-\beta)Lt_k.
    \end{align*}
    Moreover, the fact used in step (c) can be proved as follows:
    \begin{align*}
    & \left\|\frac{1}{n} \sum_{i=1}^n\left(\nabla f_{i, t_k}\left(x_i^k\right)-\nabla f_i\left(x_i^k\right)\right)-(1-\beta) \frac{1}{n} \sum_{i=1}^n\left(\nabla f_{i, t_k}\left(x_i^{k-1}\right)-\nabla f_i\left(x_i^{k-1}\right)\right)\right\| \\
    \leq &\; \frac{1}{n} \sum_{i=1}^n\left\|\nabla f_{i, t_k}\left(x_i^k\right)-\nabla f_i\left(x_i^k\right)\right\|+(1-\beta) \frac{1}{n} \sum_{i=1}^n\left\|\nabla f_{i, t_k}\left(x_i^{k-1}\right)-\nabla f_i\left(x_i^{k-1}\right)\right\| \\
    \leq &\; L t_k+(1-\beta) L t_k \\
    = &\; (2-\beta) L t_k,
    \end{align*}
    where third line is due to Lemma \ref{lemma1}.
    
    Substituting equation \eqref{4.2} into equation \eqref{4.1} yields
    \begin{align*}
    &\mathbb{E}\left[\left\|\bar{m}^k-\nabla \bar{f}^k\right\|^2 \mid \mathcal{F}_k\right] \\
    \leq &\frac{1+(1-\beta)^2}{2}\left\|\bar{m}^{k-1}-\nabla \bar{f}^{k-1}\right\|^2 
    + \mathbb{E}\left[\left\|\beta \bar{v}_k+(1-\beta) \bar{r}_k\right\|^2 \mid \mathcal{F}_k\right]+\frac{2(1-\beta)^2(2-\beta)}{\beta} L^2 t_k^2 \\
    \leq &\; \frac{1+(1-\beta)^2}{2}\left\|\bar{m}^{k-1}-\nabla \bar{f}^{k-1}\right\|^2 
    + 2\beta^2 \mathbb{E}\left[\left\|\bar{v}_k\right\|^2 \mid \mathcal{F}_k\right] + 2(1-\beta)^2 \mathbb{E}\left[\left\|\bar{r}_k\right\|^2 \mid \mathcal{F}_k\right]\\
    & +\frac{2(1-\beta)^2(2-\beta)}{\beta} L^2 t_k^2. \numberthis\label{4.3}
    \end{align*}
    Next, we bound $\mathbb{E}\left[\left\|\bar{v}_k\right\|^2 \mid \mathcal{F}_k\right]$ and $\mathbb{E}\left[\left\|\bar{r}_k\right\|^2 \mid \mathcal{F}_k\right]$, respectively. For the first term, a simple calculation yields that
    \begin{align*}
\mathbb{E}\left[\left\|\bar{v}_k\right\|^2 \mid \mathcal{F}_k\right]
    = &\; \mathbb{E}\left[\left\|\frac{1}{n} \sum_{i=1}^n \hat{\nabla} f_i\left(x_i^k, u_i^k, z_i^k\right)-\nabla \bar{f}^k\right\|^2 \mid \mathcal{F}_k\right] \\
    = &\; \mathbb{E}\left[\left\|\hat{\nabla} \bar{f}\left(x^k ; u^k, \xi^k\right) - \nabla \bar{f}^k\right\|^2 \mid \mathcal{F}_k\right]\\
    \leq &\; 2 \mathbb{E}\left[\left\|\hat{\nabla} \bar{f}\left(x^k ; u^k, \xi^k\right) \right\|^2 \mid \mathcal{F}_k\right]+2 \mathbb{E}\left[\left\|\nabla \bar{f}^k\right\|^2 \mid \mathcal{F}_k\right] \\
    \leq &\; L^2 d^2 t_k^2+\left(\frac{8 d L^2\left(1+\sigma_0^2\right)}{n^2}+\frac{12 L^2}{n}\right)\left\|x^k-1_n \otimes \bar{x}^k\right\|^2 \\
    & +\left(\frac{16 d}{n}\left(1+\sigma_0^2\right)\left(1+\sigma_2^2\right)+12\right)\left\|\nabla f\left(\bar{x}^k\right)\right\|^2+\frac{16 d}{n}\left(1+\sigma_0^2\right) \sigma_3^2+\frac{4 d}{n} \sigma_1^2, \numberthis\label{4.4}
    \end{align*}
    where the last line follows from Lemma \ref{lemma2} and \eqref{tmp5}. Finally, we turn to control $\mathbb{E}\left[\left\|\bar{r}_k\right\|^2 \mid \mathcal{F}_k\right]$. A direct computation yields that
    \begin{align*}
    & \mathbb{E}_{u^k, \xi^k}\left\|\hat{\nabla} f_i\left(x_i^k ; u_i^k, \xi_i^k\right)-\hat{\nabla} f_i\left(x_i^{k-1} ; u_i^k, \xi_i^k\right)\right\|^2 \\
    \leq &\; 3 \mathbb{E}_{\xi^k} E_{u^k}\left\|d \frac{F_i\left(x_i^k+t_k u_i^k ; \xi_i^k\right)-F_i\left(x_i^k-t_k u_i^k ; \xi_i^k\right)}{2 t_k} u_i^k-d \left\langle \nabla F_i\left(x_i^k ; \xi_i^k\right), u_i^k \right\rangle u_i^k\right\|^2 \\
    & +3 \mathbb{E}_{\xi^k} E_{u^k}\left\|d \frac{F_i\left(x_i^{k-1}+t_k u_i^k ; \xi_i^k\right)-F_i\left(x_i^{k-1}-t_k u_i^k ; \xi_i^k\right)}{2 t_k} u_i^k-d \left\langle \nabla F_i\left(x_i^{k-1} ; \xi_i^k\right), u_i^k \right\rangle u_i^k\right\|^2 \\
    & +3 \mathbb{E}_{\xi^k} E_{u^k}\left\|d \left\langle \nabla F_i\left(x_i^k ; \xi_i^k\right)-\nabla F_i\left(x_i^{k-1} ; \xi_i^k\right), u_i^k \right\rangle u_i^k\right\|^2 \\
    \stackrel{(a)}{\leq}&\; \frac{3}{2} d^2 L^2 t_k^2+3 d E_{\xi^k}\left\|\nabla F_i\left(x_i^k ; \xi_i^k\right)-\nabla F_i\left(x_i^{k-1} ; \xi_i^k\right)\right\|^2 \\
    \stackrel{(b)}{\leq}&\; \frac{3}{2} d^2 L^2 t_k^2+3 d L^2\left\|x_i^k-x_i^{k-1}\right\|^2, \numberthis\label{4.5}
    \end{align*}
    where step (a) is due to Lemma \ref{lemma1} and step (b) follows from the $L$-smoothness of $F_i\left(\cdot;\xi_i^k\right)$. Thus it can be seen that
    \begin{align*}
    & \mathbb{E}\left[\left\|\bar{r}_k\right\|^2 \mid \mathcal{F}_k\right] \\
    = &\; \mathbb{E}\left[\left\|\frac{1}{n} \sum_{i=1}^n\left(\hat{\nabla} f_i\left(x_i^k ; u_i^k, \xi_i^k\right)-\hat{\nabla} f_i\left(x_i^{k-1} ; u_i^k, \xi_i^k\right)+\nabla f_i\left(x_i^{k-1}\right)-\nabla f_i\left(x_i^k\right)\right)\right\|^2 \mid \mathcal{F}_k\right] \\
    = &\; \mathbb{E}_{u^k, \xi^k} \Bigg\| \frac{1}{n} \sum_{i=1}^n {\left[\hat{\nabla} f_i\left(x_i^k ; u_i^k, \xi_i^k\right)-\hat{\nabla} f_i\left(x_i^{k-1} ; u_i^k, \xi_i^k\right)-\left(\nabla f_{i, t_k}\left(x_i^k\right)-\nabla f_{i, t_k}\left(x_i^{k-1}\right)\right)\right.}\\
    &\left.+\left(\nabla f_{i,t_k}\left(x_i^k\right)-\nabla f_{i, t_k}\left(x_i^{k-1}\right)\right)-\left(\nabla f_i\left(x_i^k\right)-\nabla f_i\left(x_i^{k-1}\right)\right)\right] \Bigg\|^2 \\
    \leq &\; 2 \mathbb{E}_{u^k, \xi^k}\left\|\frac{1}{n} \sum_{i=1}^n\left[\hat{\nabla} f_i\left(x_i^k ; u_i^k, \xi_i^k\right)-\hat{\nabla} f_i\left(x_i^{k-1} ; u_i^k, \xi_i^k\right)-\left(\nabla f_{i, t_k}\left(x_i^k\right)-\nabla f_{i, t_k}\left(x_i^{k-1}\right)\right)\right]\right\|^2 \\
    &+2\left\|\frac{1}{n} \sum_{i=1}^n\left[\left(\nabla f_{i, t_k}\left(x_i^k\right)-\nabla f_{i, t_k}\left(x_i^{k-1}\right)\right)-\left(\nabla f_i\left(x_i^k\right)-\nabla f_i\left(x_i^{k-1}\right)\right)\right]\right\|^2 \\
    \leq &\; \frac{2}{n^2} \mathbb{E}_{u^k, \xi^k} \sum_{i=1}^n\left\|\hat{\nabla} f_i\left(x_i^k ; u_i^k, \xi_i^k\right)-\hat{\nabla} f_i\left(x_i^{k-1} ; u_i^k, \xi_i^k\right)-\left(\nabla f_{i, t_k}\left(x_i^k\right)-\nabla f_{i, t_k}\left(x_i^{k-1}\right)\right)\right\|^2 \\
    & +4\left\|\frac{1}{n} \sum_{i=1}^n \nabla f_{i, t_k}\left(x_i^k\right)-\nabla f_i\left(x_i^k\right)\right\|^2+4\left\|\frac{1}{n} \sum_{i=1}^n \nabla f_{i, t_k}\left(x_i^{k-1}\right)-\nabla f_i\left(x_i^{k-1}\right)\right\|^2 \\
    \leq &\; \frac{2}{n^2} \sum_{i=1}^n \mathbb{E}_{u^k, \xi^k}\left\|\hat{\nabla} f_i\left(x_i^k ; u_i^k, \xi_i^k\right)-\hat{\nabla} f_i\left(x_i^{k-1} ; u_i^k, \xi_i^k\right)\right\|^2+8 L^2 t_k^2\\
    \stackrel{(a)}{\leq} &\; \frac{2}{n^2} \sum_{i=1}^n\left(\frac{3}{2} d^2 L^2 t_k^2+3 d L^2\left\|x_i^k-x_i^{k-1}\right\|^2\right)+8 L^2 t_k^2 \\
    = &\; \frac{6 d L^2}{n^2}\left\|x^k-x^{k-1}\right\|^2+\left(8 L^2+\frac{3 d^2 L^2}{n}\right) t_k^2 \\
    \leq &\; \frac{18 d L^2}{n^2}\left\|x^k-1_n \otimes \bar{x}^k\right\|^2+\frac{18 d L^2}{n^2}\left\|x^{k-1}-1_n \otimes x^{k-1}\right\|^2+\frac{18 \alpha^2 d L^2}{n}\left\|\bar{m}^{k-1}\right\|^2+\left(8 L^2+\frac{3 d^2 L^2}{n}\right) t_k^2,\numberthis\label{4.6}
    \end{align*}
    where step (a) is due to \eqref{4.5}. Substituting equations \eqref{4.4} and \eqref{4.6} into equation \eqref{4.3} and taking a total expectation, we get
    \begin{align*}
    \mathbb{E}\left\|\bar{m}^k-\nabla \bar{f}^k\right\|^2
    \leq & \frac{1+(1-\beta)^2}{2} \mathbb{E}\left\|\bar{m}^{k-1}-\nabla \bar{f}^{k-1}\right\|^2 \\
    & +\left(\frac{32 \beta^2 d}{n}\left(1+\sigma_0^2\right)\left(1+\sigma_2^2\right)+24 \beta^2\right) \mathbb{E}\left\|\nabla f\left(\bar{x}^k\right)\right\|^2 \\
    & +\frac{36\alpha^2(1-\beta)^2 d L^2}{n} \mathbb{E}\left\|\bar{m}^{k-1}\right\|^2 +\frac{36(1-\beta)^2 d L^2}{n^2} \mathbb{E}\left\|x^{k-1}-1_n \otimes \bar{x}^{k-1}\right\|^2 \\
    & +\left(\frac{16 \beta^2 d L^2\left(1+\sigma_0^2\right)}{n^2}+\frac{24 \beta^2 L^2}{n}+\frac{36(1-\beta)^2 d L^2}{n^2}\right) \mathbb{E}\left\|x^k-1_n \otimes \bar{x}^k\right\|^2 \\
    & +\left(2 \beta^2 L^2 d^2+ 16(1-\beta)^2 L^2 +\frac{6(1-\beta)^2 d^2 L^2}{n}+\frac{2(1-\beta)^2(2-\beta) L^2}{\beta}\right) t_k^2 \\
    & +\frac{8 \beta^2 d}{n} \sigma_1^2+\frac{32 \beta^2 d}{n}\left(1+\sigma_0^2\right) \sigma_3^2,
    \end{align*}
which completes the proof.
\end{proof}

\begin{lemma}
\label{lemma5} Consider the sequence $\left\{m^k\right\}$ generated by Algorithm 1. For $k \geq 1$, we have
    \begin{align*}
    \mathbb{E}\left\|m^k-\nabla f\left(x^k\right)\right\|^2 
    \leq &\; \frac{1+(1-\beta)^2}{2} \mathbb{E}\left\|m^{k-1}-\nabla f\left(x^{k-1}\right)\right\|^2 +36 \beta^2 n d\left(1+\sigma_0^2\right)\left(1+\sigma_2^2\right) \mathbb{E}\left\|\nabla f\left(\bar{x}^k\right)\right\|^2 \\
    & +\left(36 \beta^2 d L^2\left(1+\sigma_0^2\right)+48(1-\beta)^2 d L^2\right) \mathbb{E}\left\|x^k-1_n \otimes \bar{x}^k\right\|^2 \\
    & +48(1-\beta)^2 d L^2 \mathbb{E}\left\|x^{k-1}-1_n \otimes \bar{x}^{k-1}\right\|^2 +48 \alpha^2(1-\beta)^2 d L^2 d n \mathbb{E}\left\|\bar{m}^{k-1}\right\|^2 \\
    & +\left(\left(2 \beta^2+6(1-\beta)^2\right) n d^2 L^2+\frac{8 n L^2(1-\beta)}{\beta(2-\beta)}\right) t_k^2+36 \beta^2 n d\left(1+\sigma_0^2\right) \sigma_3^2+8 \beta^2 n d \sigma_1^2.
    \end{align*}
\end{lemma}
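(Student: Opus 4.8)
The plan is to follow the same three-term decomposition used in the proof of Lemma \ref{lemma4}, but applied to the stacked (rather than the averaged) vectors, so that the consensus-type quantities $\|x^k-1_n\otimes\bar{x}^k\|^2$ and $\|\bar{m}^{k-1}\|^2$ appear on the right-hand side. First I would rewrite the momentum update in the form
\begin{equation*}
m^k-\nabla f^k=(1-\beta)\left(m^{k-1}-\nabla f^{k-1}\right)+\beta v_k+(1-\beta) r_k,
\end{equation*}
where $v_k:=\hat{\nabla} f(x^k;u^k,\xi^k)-\nabla f^k$ and $r_k:=\bigl(\hat{\nabla} f(x^k;u^k,\xi^k)-\hat{\nabla} f(x^{k-1};u^k,\xi^k)\bigr)-\bigl(\nabla f^k-\nabla f^{k-1}\bigr)$; this is the stacked analogue of the averaged quantities $\bar{v}_k,\bar{r}_k$ from Lemma \ref{lemma4}. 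Squaring and conditioning on $\mathcal{F}_k$ then splits $\mathbb{E}[\|m^k-\nabla f^k\|^2\mid\mathcal{F}_k]$ into the contraction term $(1-\beta)^2\|m^{k-1}-\nabla f^{k-1}\|^2$, the second-moment term $\mathbb{E}[\|\beta v_k+(1-\beta)r_k\|^2\mid\mathcal{F}_k]$, and a cross term.

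The cross term is the crux, since the zeroth-order estimator is biased: by Lemma \ref{lemma1}(2), $\mathbb{E}_{u_i^k,\xi_i^k}[\hat{\nabla} f_i(x_i^k;u_i^k,\xi_i^k)]=\nabla f_{i,t_k}(x_i^k)$, so the conditional mean of $\beta v_k+(1-\beta)r_k$ is the stacked vector whose $i$-th block is $(\nabla f_{i,t_k}(x_i^k)-\nabla f_i(x_i^k))-(1-\beta)(\nabla f_{i,t_k}(x_i^{k-1})-\nabla f_i(x_i^{k-1}))$. Using the smoothing estimate $\|\nabla f_{i,t_k}(\cdot)-\nabla f_i(\cdot)\|\le Lt_k$ from Lemma \ref{lemma1}(2) and summing the $n$ blocks bounds the norm of this mean by $(2-\beta)\sqrt{n}\,Lt_k$. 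I would then apply Young's inequality with weight $\beta(2-\beta)/2$ to absorb part of the cross term into the contraction, upgrading the coefficient $(1-\beta)^2$ to $\tfrac{1+(1-\beta)^2}{2}$ and leaving an $O\!\left(\tfrac{(1-\beta)^2}{\beta}nL^2t_k^2\right)$ remainder, exactly as in \eqref{4.2} but with the extra factor $n$ from the stacked norm.

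For the second-moment term I would use $\|\beta v_k+(1-\beta)r_k\|^2\le 2\beta^2\|v_k\|^2+2(1-\beta)^2\|r_k\|^2$ and bound the two pieces separately. The piece $\mathbb{E}[\|v_k\|^2\mid\mathcal{F}_k]\le 2\mathbb{E}[\|\hat{\nabla} f(x^k;u^k,\xi^k)\|^2\mid\mathcal{F}_k]+2\|\nabla f^k\|^2$ is controlled by Lemma \ref{lemma3} together with \eqref{tmp3}, producing the $\beta^2$-weighted $\|\nabla f(\bar{x}^k)\|^2$, consensus-error, $\sigma_3^2$, $\sigma_1^2$ and $d^2t_k^2$ terms. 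For $\mathbb{E}[\|r_k\|^2\mid\mathcal{F}_k]=\sum_{i=1}^n\mathbb{E}\|r_{k,i}\|^2$ I would, node by node, separate $r_{k,i}$ into its zero-mean fluctuation (bounded through \eqref{4.5} by $\tfrac32 d^2L^2t_k^2+3dL^2\|x_i^k-x_i^{k-1}\|^2$) and its deterministic smoothing bias (of norm $\le 2Lt_k$), then sum. The resulting $\|x^k-x^{k-1}\|^2$ is finally split as $3\|x^k-1_n\otimes\bar{x}^k\|^2+3\|x^{k-1}-1_n\otimes\bar{x}^{k-1}\|^2+3n\alpha^2\|\bar{m}^{k-1}\|^2$, invoking the gradient-tracking invariant $\bar{x}^k-\bar{x}^{k-1}=-\alpha\bar{m}^{k-1}$. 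Taking a total expectation and collecting terms yields the claimed bound.

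I expect the main obstacle to be bookkeeping the dimension and network factors correctly in the stacked setting: unlike Lemma \ref{lemma4}, where averaging contributed the benign $1/n^2$ factors, here every per-node bound is merely summed, so the variance terms carry a full factor $n$ and each application of the 2-point estimator identities in Lemma \ref{lemma1}(3) injects a factor $d$. Keeping the biased-mean cross term under control, so that the contraction coefficient stays strictly below one while the $t_k^2$ and $\|\bar{m}^{k-1}\|^2$ contributions remain of the right order, is the delicate part that must ultimately be balanced through the choices of $\alpha$, $\beta$, and $\{t_k\}$ in the final theorem.
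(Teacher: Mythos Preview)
Your proposal is correct and follows essentially the same route as the paper: the three-term decomposition $m^k-\nabla f^k=(1-\beta)(m^{k-1}-\nabla f^{k-1})+\beta v_k+(1-\beta)r_k$, Young's inequality on the biased cross term to upgrade the contraction to $\tfrac{1+(1-\beta)^2}{2}$, Lemma~\ref{lemma3} together with \eqref{tmp3} for $\|v_k\|^2$, the per-node bound \eqref{4.5} for $\|r_k\|^2$, and the split of $\|x^k-x^{k-1}\|^2$ via the tracking invariant $\bar{x}^k-\bar{x}^{k-1}=-\alpha\bar{m}^{k-1}$ all match the paper's proof. The only cosmetic difference is that for $r_k$ the paper uses the cruder split $\|a+b\|^2\le 2\|a\|^2+2\|b\|^2$ with $a=\hat{\nabla}f_i(x_i^k)-\hat{\nabla}f_i(x_i^{k-1})$ and $b=\nabla f_i(x_i^{k-1})-\nabla f_i(x_i^k)$ (rather than your zero-mean/bias split), which produces the same order of bound with slightly different constants.
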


\begin{proof}
Recalling that
\begin{align*}
    m^k & =\beta \hat{\nabla} f\left(x^k ; u^k, \xi^k\right)+(1-\beta)\left(m^{k-1}+\hat{\nabla} f\left(x^k ; u^k, \xi^k\right)-\hat{\nabla} f\left(x^{k-1} ; u^k, \xi^k\right)\right),
\end{align*}
we have
    \begin{align*}
    & \mathbb{E}\left[\left\|m^k-\nabla f\left(x^k\right)\right\|^2 \mid \mathcal{F}_k\right] \\
    = & \mathbb{E}\left[\bigg\| \beta \hat{\nabla} f\left(x^k ; u^k, \xi^k\right)+(1-\beta)\left(m^{k-1}+\hat{\nabla} f\left(x^k ; u^k, \xi^k\right)-\hat{\nabla} f\left(x^{k-1} ; u^k, \xi^k\right)\right)-\nabla f\left(x^k\right) \bigg\|^2 \mid \mathcal{F}_k\right] \\
    = & \mathbb{E}\left[\bigg\| \beta\left(\hat{\nabla} f\left(x^k ; u^k, \xi^k\right)-\nabla f\left(x^k\right) \right)+(1-\beta)\left(m^{k-1}-\nabla f\left(x^{k-1}\right)\right)\right. \\
    & \left.+(1-\beta)\left(\hat{\nabla} f\left(x^k ; u^k, \xi^k\right)-\hat{\nabla} f\left(x^{k-1} ; u^k, \xi^k\right)+\nabla f\left(x^{k-1}\right)-\nabla f \left( x^k\right)\right) \bigg\|^2 \mid \mathcal{F}_k\right] \\
    = & (1-\beta)^2 \mathbb{E}\left[\left\|m^{k-1}-\nabla f\left(x^{k-1}\right)\right\|^2 \mid \mathcal{F}_k\right]+\mathbb{E}\left[\left\|\beta v_k+(1-\beta) r_k\right\|^2 \mid \mathcal{F}_k\right] \\
    & +2 \mathbb{E}\left[\left\langle\beta v_k+(1-\beta) r_k,(1-\beta)\left(m^{k-1}-\nabla f\left(x^{k-1}\right)\right)\right\rangle \mid \mathcal{F}_k\right], \numberthis\label{5.1}
    \end{align*}
    where 
    \begin{align*}
    v_k &= \hat{\nabla} f\left(x^k ; u^k, \xi^k\right)-\nabla f\left(x^k\right),\\
    r_k &= \hat{\nabla} f\left(x^k ; u^k, \xi^k\right)-\hat{\nabla} f\left(x^{k-1} ; u^k, \xi^k\right)+\nabla f\left(x^{k-1}\right)-\nabla f\left(x^k\right).
    \end{align*}

    \noindent The last term of \eqref{5.1} can be bounded as follows:
    \begin{align*}
    & 2 \mathbb{E}\left[\left\langle\beta v_k+(1-\beta) r_k,(1-\beta)\left(m^{k-1}-\nabla f\left(x^{k-1}\right)\right)\right\rangle \mid \mathcal{F}_k\right] \\
    = & 2\left\langle\nabla f_{t_k}\left(x^k\right)-\nabla f\left(x^k\right)+(1-\beta)\left(\nabla f_{t_k}\left(x^{k-1}\right)-\nabla f\left(x^{k-1}\right)\right),(1-\beta)\left(m^{k-1}-\nabla f\left(x^{k-1}\right)\right)\right\rangle \\
    \leq & 2(1-\beta)\left\|\nabla f_{t_k}\left(x^k\right)-\nabla f\left(x^k\right)+(1-\beta)\left(\nabla f_{t_k}\left(x^{k-1}\right)-\nabla f\left(x^{k-1}\right)\right)\right\| \cdot\left\|m^{k-1}-\nabla f\left(x^{k-1}\right)\right\| \\
    \leq & \frac{\beta(2-\beta)}{2}\left\|m^{k-1}-\nabla f\left(x^{k-1}\right)\right\|^2+\frac{2(1-\beta)}{\beta(2-\beta)}\left\|\nabla f_{t_k}\left(x^k\right)-\nabla f\left(x^k\right)+(1-\beta)\left(\nabla f_{t_k}\left(x^{k-1}\right)-\nabla f\left(x^{k-1}\right)\right)\right\|^2 \\
    \leq & \frac{\beta(2-\beta)}{2}\left\|m^{k-1}-\nabla f\left(x^{k-1}\right)\right\|^2+\frac{8 n L^2(1-\beta)}{\beta(2-\beta)} t_k^2, \numberthis \label{5.2}
    \end{align*}
    where the last inequality is due to
    \begin{align*}
    & \left\|\nabla f_{t_k}\left(x^k\right)-\nabla f\left(x^k\right)+(1-\beta)\left(\nabla f_{t_k}\left(x^{k-1}\right)-\nabla f\left(x^{k-1}\right)\right)\right\|^2 \\
    \leq & 2\left\|\nabla f_{t_k}\left(x^k\right)-\nabla f\left(x^k\right)\right\|^2+2(1-\beta)^2\left\|\nabla f_{t_k}\left(x^{k-1}\right)-\nabla f\left(x^{k-1}\right)\right\|^2 \\
    \leq & 2 n L^2 t_k^2+2(1-\beta)^2 n L^2 t_k^2 \\
    \leq & 4 n L^2 t_k^2.
    \end{align*}
    Thus, substituting \eqref{5.2} into  \eqref{5.1} gives that
    \begin{align*}
    &\mathbb{E}\left[\left\|m^k-\nabla f\left(x^k\right)\right\|^2 \mid \mathcal{F}_k\right] \\
    \leq& \frac{1+(1-\beta)^2}{2}\left\|m^{k-1}-\nabla f\left(x^{k-1}\right)\right\|^2+2 \beta^2 \mathbb{E}\left[\left\|\hat{\nabla}f\left(x^k ; u^k, \xi^k\right)-\nabla f\left(x^k\right)\right\|^2 \mid \mathcal{F}_k\right] \\
    & +2(1-\beta)^2 \mathbb{E}\left[\left\|\hat{\nabla} f\left(x^k ; u^k, \xi^k\right)-\hat{\nabla} f\left(x^{k-1} ; u^k, \xi^k\right)+\nabla f\left(x^{k-1}\right)-\nabla f\left(x^k\right)\right\|^2 \mid \mathcal{F}_k\right] \\
    & +\frac{8 n L^2(1-\beta)}{\beta(2-\beta)} t_k^2. \numberthis\label{5.3}
    \end{align*}
    
    \noindent According to Lemma \ref{lemma3}, it can be shown that
    \begin{align*}
    & \mathbb{E}\left[\left\|\hat{\nabla} f\left(x^k ; u^k, \xi^k\right)-\nabla f\left(x^k\right)\right\|^2 \mid \mathcal{F}_k\right] \\
    \leq & \; 2 \mathbb{E}\left[\left\|\hat{\nabla} f\left(x^k ; u^k, \xi^k\right)\right\|^2 \mid \mathcal{F}_k\right]+2\left\|\nabla f\left(x^k\right)\right\|^2 \\
    \leq & \;  n d^2 L^2 t_k^2+12 d L^2\left(1+\sigma_0^2\right)\left\|x^k-1_n \otimes \bar{x}^k\right\|^2+12 n d\left(1+\sigma_0^2\right)\left(1+\sigma_2^2\right)\left\|\nabla f\left(\bar{x}^k\right)\right\|^2 \\
    & +12 n d\left(1+\sigma_0^2\right) \sigma_3^2+4 n d \sigma_1^2+2\left(3 L^2\left\|x^k-1_n \otimes \bar{x}^k\right\|^2+3 n\left(1+\sigma_2^2\right)\left\|\nabla f\left(\bar{x}^k\right)\right\|^2+3 n \sigma_3^2\right) \\
    \leq & \;  n d^2 L^2 t_k^2+18 d L^2\left(1+\sigma_0^2\right)\left\|x^k-1_n \otimes \bar{x}^k\right\|^2+18 n d\left(1+\sigma_0^2\right)\left(1+\sigma_2^2\right)\left\|\nabla f\left(\bar{x}^k\right)\right\|^2 \\
    & +18 n d\left(1+\sigma_0^2\right) \sigma_3^2+4 n d \sigma_1^2. \numberthis\label{5.4}
    \end{align*}

    \noindent Moreover, we have
    \begin{align*}
    & \mathbb{E}\left[\left\|\hat{\nabla} f\left(x^k ; u^k, \xi^k\right)-\hat{\nabla} f\left(x^{k-1} ; u^k, \xi^k\right)+\nabla f\left(x^{k-1}\right)-\nabla f\left(x^k\right)\right\|^2 \mid \mathcal{F}_k\right] \\
    = & \; \sum_{i=1}^n \mathbb{E}\left[\left\|\hat{\nabla} f_i\left(x_i^k ; u_i^k, \xi_i^k\right)-\hat{\nabla} f_i\left(x_i^{k-1} ; u_i^k, \xi_i^k\right)+\nabla f_i\left(x_i^{k-1}\right)-\nabla f_i\left(x_i^k\right)\right\|^2 \mid \mathcal{F}_k\right] \\
    \leq & \; 2 \sum_{i=1}^n\left(\frac{3}{2} d^2 L^2 t_k^2+3 d L^2 \left\|x_i^k-x_i^{k-1}\right\|^2\right)+2 \sum_{i=1}^n L^2\left\|x_i^k-x_i^{k-1}\right\|^2 \\
    \leq & \; 3 n d^2 L^2 t_k^2+ 8 d L^2\left\|x^k-x^{k-1}\right\|^2 \\
    \leq & \; 3 n d^2 L^2 t_k^2+24 d L^2\left\|x^k-1_n \otimes \bar{x}^k\right\|^2+24 d L^2 \left\|x^{k-1}-1_n \otimes \bar{x}^{k-1}\right\|^2+24 \alpha^2 d L^2 n\left\|\bar{m}^{k-1}\right\|^2, \numberthis\label{5.5}
    \end{align*}
    where the first inequality follows from \eqref{4.5}.

    Plugging \eqref{5.4} and \eqref{5.5} into \eqref{5.3}, and taking the total expectation on both sides, we eventually obtain that 
    \begin{align*}
    & \mathbb{E}\left\|m^k-\nabla f\left(x^k\right)\right\|^2 \\
    \leq &\; \frac{1+(1-\beta)^2}{2} \mathbb{E}\left\|m^{k-1}-\nabla f\left(x^{k-1}\right)\right\|^2+36 \beta^2 n d\left(1+\sigma_0^2\right)\left(1+\sigma_2^2\right) \mathbb{E}\left\|\nabla f\left(\bar{x}^k\right)\right\|^2 \\
    & +\left(36 \beta^2 d L^2\left(1+\sigma_0^2\right)+48(1-\beta)^2 d L^2 \right) \mathbb{E}\left\|x^k-1_n \otimes \bar{x}^k\right\|^2+48(1-\beta)^2 d L^2 \mathbb{E}\left\|x^{k-1}-1_n \otimes \bar{x}^{k-1}\right\|^2 \\
    & +48 \alpha^2(1-\beta)^2 d L^2 n \mathbb{E}\left\|\bar{m}^{k-1}\right\|^2 \\
    & +\left(\left(2 \beta^2+6(1-\beta)^2\right) n d^2 L^2+\frac{8 n L^2(1-\beta)}{\beta(2-\beta)}\right) t_k^2+36 \beta^2 n d\left(1+\sigma_0^2\right) \sigma_3^2+8 \beta^2 n d \sigma_1^2.
    \end{align*}

\vspace{-30pt}
\end{proof}

\begin{lemma}
\label{lemma6}
Consider the sequence $\left\{x^k\right\}$ generated by Algorithm 1. Under Assumption \ref{ass1}, for $k \geq 1$, we have
    $$
    \mathbb{E}\left\|x^{k+1}-1_n \otimes \bar{x}^{k+1}\right\|^2
    \leq \frac{1+\rho^2}{2} \mathbb{E}\left\|x^k-1_n \otimes \bar{x}^k\right\|^2+\frac{\alpha^2\left(1+\rho^2\right) \rho^2}{1-\rho^2} \mathbb{E}\left\|g^{k+1}-1_n \otimes \bar{g}^{k+1}\right\|^2.
    $$
\end{lemma}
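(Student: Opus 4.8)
The plan is to exploit the contraction property of the consensus matrix once the network average has been projected out. First I would introduce the averaging matrix $J := \frac{1_n 1_n^\top}{n}$ and observe that $1_n \otimes \bar{x}^k = (J \otimes I_d)\,x^k$, so the quantity to be bounded is $\left\|\left((I_n - J)\otimes I_d\right) x^{k+1}\right\|^2$. Since Assumption \ref{ass1} guarantees that $W$ is doubly stochastic, we have $WJ = JW = J$, hence $(I_n - J)W = W - J$, and moreover $(W - J)1_n = 0$. This last identity is the key mechanism: the operator $W - J$ annihilates every consensus vector, i.e.\ every vector of the form $1_n \otimes v$.

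Next I would substitute the iterate update $x^{k+1} = (W \otimes I_d)(x^k - \alpha g^{k+1})$ and use the identities above to factor out the mean. Because $W-J$ removes the consensus parts of both $x^k$ and $g^{k+1}$, the mean-subtracted iterate can be written purely in terms of the already-centred quantities,
\begin{align*}
x^{k+1} - 1_n \otimes \bar{x}^{k+1} = \left((W - J)\otimes I_d\right)\left[\left(x^k - 1_n \otimes \bar{x}^k\right) - \alpha\left(g^{k+1} - 1_n \otimes \bar{g}^{k+1}\right)\right].
\end{align*}
Taking norms and invoking the spectral bound $\left\|(W - J)\otimes I_d\right\| = \|W - J\| = \rho$ from the Remark then yields
\begin{align*}
\left\|x^{k+1} - 1_n \otimes \bar{x}^{k+1}\right\|^2 \le \rho^2 \left\|\left(x^k - 1_n \otimes \bar{x}^k\right) - \alpha\left(g^{k+1} - 1_n \otimes \bar{g}^{k+1}\right)\right\|^2.
\end{align*}

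Finally I would split the right-hand side by Young's inequality $\|a - b\|^2 \le (1+\eta)\|a\|^2 + (1 + \eta^{-1})\|b\|^2$, choosing the parameter $\eta = \frac{1 - \rho^2}{2\rho^2}$ so that the leading coefficient is calibrated to $\rho^2(1 + \eta) = \frac{1+\rho^2}{2}$, while the second coefficient becomes $\rho^2(1 + \eta^{-1}) = \frac{\rho^2(1+\rho^2)}{1 - \rho^2}$. Taking the total expectation over both sides produces the claimed inequality. I do not expect a genuine obstacle here: the argument is a standard linear-algebra contraction estimate, and the only delicate points are the careful bookkeeping of the Kronecker structure (so that $W-J$ can be seen to kill the mean components) and the selection of the Young parameter that tunes the leading coefficient to exactly $\frac{1+\rho^2}{2}$. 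Notably, neither the momentum nor the gradient-tracking details enter, and the randomness is inert until the expectation is applied at the very end.
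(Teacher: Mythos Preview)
Your proposal is correct and follows essentially the same route as the paper's proof: write the centred iterate as $((W-J)\otimes I_d)$ applied to the centred pair $(x^k-1_n\otimes\bar{x}^k)-\alpha(g^{k+1}-1_n\otimes\bar{g}^{k+1})$, apply the spectral bound $\rho$, and then split with Young's inequality using the same parameter $\eta=(1-\rho^2)/(2\rho^2)$. The only cosmetic difference is notation (the paper writes out $\tfrac{1_n1_n^\top}{n}$ rather than introducing $J$).
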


\begin{proof}
Recall that $x^{k+1}=(W\otimes I_d)(x^k - \alpha g^{k+1})$. A simple calculation yields that
    \begin{align*}
    \mathbb{E}\left\|x^{k+1}-1_n \otimes \bar{x}^{k+1}\right\|^2 & =\mathbb{E}\left\|\left(I_{n d}-\frac{1_n 1_n^T}{n} \otimes I_d\right) x^{k+1}\right\|^2 \\
    & =\mathbb{E}\left\|\left(I_{n d}-\frac{1_n 1_n^T}{n} \otimes I_d\right)\left(W \otimes I_d\right)\left(x^k-\alpha g^{k+1}\right)\right\|^2 \\
    & =\mathbb{E}\left\|\left(\left(W-\frac{1_n 1_n^T}{n}\right) \otimes I_d\right)\left(x^k-\alpha g^{k+1}\right)\right\|^2 \\
    & = \mathbb{E}\left\|\left(\left(W-\frac{1_n 1_n^T}{n}\right) \otimes I_d\right)\left(x^k- 1_n\otimes \bar{x}^k - \alpha\left( g^{k+1} - 1_n\otimes \bar{g}^{k+1}\right)\right)\right\|^2 \\
    &\leq \rho^2 \cdot \mathbb{E}\left\|x^k- 1_n\otimes \bar{x}^k - \alpha\left( g^{k+1} - 1_n\otimes \bar{g}^{k+1}\right)\right\|^2 \\
    &\leq \rho^2\left( 1+ \frac{1-\rho^2}{2\rho^2}\right)\mathbb{E}\left\|x^k- 1_n\otimes \bar{x}^k \right\|^2 + \rho^2 \left( 1+\frac{2\rho^2}{1-\rho^2} \right)\alpha^2 \mathbb{E}\left\| g^{k+1} - 1_n\otimes \bar{g}^{k+1}\right\|^2\\
    & =\frac{1+\rho^2}{2} \mathbb{E}\left\|x^k-1_n \otimes \bar{x}^k\right\|^2+\frac{\alpha^2\left(1+\rho^2\right) \rho^2}{1-\rho^2} \mathbb{E}\left\|g^{k+1}-1_n \otimes \bar{g}^{k+1}\right\|^2,
    \end{align*}
where the third line is due to Assumption \ref{ass1}, the fourth line uses the fact that $((W - n^{-1}1_n1_n^\top)\otimes I_d)(1_n\otimes a)=0$ for any vector $a\in \mathbb{R}^d$, and the last line follows from the element inequality that $\|a+b\|^2\leq (1+\gamma) \|a\|^2 + (1+\gamma^{-1})\|b\|^2$ with $\gamma=(1-\rho^2)/(2\rho^2)$.
\end{proof}

\begin{lemma}
\label{lemma7}
Consider the sequence $\{g^k\}$ generated by Algorithm \ref{algo}. Suppose $\alpha \leq \frac{1-\rho^2}{12 \sqrt{2} d L \rho^2}$ and $\beta \leq \frac{\sqrt{d}}{\sqrt{27\left(1-\rho^2\right)\left(1+\sigma_0^2\right)}}$. Under Assumption \ref{ass1}, \ref{ass2} and \ref{ass3}, for $k \ge 1$, we have
    \begin{align*}
    \mathbb{E}\left\|g^{k+1}-1_n \otimes \bar{g}^{k+1}\right\|^2
    \leq &\; \frac{3+\rho^2}{4} \mathbb{E}\left\|g^k-1_n \otimes \bar{g}^k\right\|^2+\frac{36 \alpha^2 n d^2 L^2 \rho^2}{1-\rho^2} \mathbb{E}\left\|\bar{m}^{k-1}\right\|^2 \\
    & +\frac{9 \beta^2 \rho^2}{1-\rho^2} \mathbb{E}\left\|m^{k-1}-\nabla f\left(x^{k-1}\right)\right\|^2+\frac{110 d^2 L^2 \rho^2}{1-\rho^2} \mathbb{E}\left\|x^{k-1}-1_n \otimes \bar{x}^{k-1}\right\|^2 \\
    & +54 \beta^2 n d \rho^2\left(1+\sigma_0^2\right)\left(1+\sigma_2^2\right) \mathbb{E}\left\|\nabla f\left(\bar{x}^{k-1}\right)\right\|^2+\left(9 n d^2 L^2 \rho^2+\frac{6 \rho^2 \beta^2 L^2}{1-\rho^2}\right) t_k^2 \\
    & +54 \beta^2 n d \rho^2\left(1+\sigma_0^2\right) \sigma_3^2+12 \beta^2 n d \rho^2 \sigma_1^2. \\
    \end{align*}
\end{lemma}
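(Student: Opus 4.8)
The plan is to push the gradient-tracking recursion through the averaging operator so that the entire estimate reduces to a bound on the single-step momentum increment $\mathbb{E}\|m^k-m^{k-1}\|^2$. Writing $\bar g^{k+1}=\bar g^k+\bar m^k-\bar m^{k-1}$ (which follows from $g^{k+1}=(W\otimes I_d)(g^k+m^k-m^{k-1})$ and $1_n^\top W=1_n^\top$), and using that $(W-\tfrac{1_n1_n^\top}{n})\otimes I_d$ annihilates $1_n\otimes a$, I would first obtain
\begin{align*}
g^{k+1}-1_n\otimes\bar g^{k+1}=\left(\left(W-\tfrac{1_n1_n^\top}{n}\right)\otimes I_d\right)\left[(g^k-1_n\otimes\bar g^k)+(m^k-m^{k-1})\right].
\end{align*}
Since the spectral norm of this matrix is $\rho$, the triangle inequality gives $\|g^{k+1}-1_n\otimes\bar g^{k+1}\|\le\rho\|g^k-1_n\otimes\bar g^k\|+\rho\|m^k-m^{k-1}\|$, and the elementary inequality $\|a+b\|^2\le(1+\gamma)\|a\|^2+(1+\gamma^{-1})\|b\|^2$ with $\gamma=\tfrac{3(1-\rho^2)}{4\rho^2}$ converts the first summand into exactly $\tfrac{3+\rho^2}{4}\|g^k-1_n\otimes\bar g^k\|^2$ while leaving a prefactor of order $\tfrac{\rho^2}{1-\rho^2}$ on $\mathbb{E}\|m^k-m^{k-1}\|^2$. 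This is the origin of both the contraction coefficient $\tfrac{3+\rho^2}{4}$ and every $\tfrac{\rho^2}{1-\rho^2}$ factor appearing in the statement.

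The heart of the argument is therefore the bound on $\mathbb{E}\|m^k-m^{k-1}\|^2$, and the key algebraic step is to rewrite the momentum update as
\begin{align*}
m^k-m^{k-1}=\left(\hat\nabla f(x^k;u^k,\xi^k)-\hat\nabla f(x^{k-1};u^k,\xi^k)\right)+\beta\left(\hat\nabla f(x^{k-1};u^k,\xi^k)-m^{k-1}\right),
\end{align*}
which cleanly separates a spatial finite-difference term from a $\beta$-scaled estimator-minus-momentum term. I would bound the first term by combining the finite-difference property in Lemma \ref{lemma1} with the $L$-smoothness of $F_i(\cdot;\xi_i)$; a Cauchy--Schwarz estimate on $d\langle\nabla F_i(x_i^k)-\nabla F_i(x_i^{k-1}),u_i^k\rangle u_i^k$ keeps the full factor $d^2$, producing a contribution of order $d^2L^2(\|x^k-x^{k-1}\|^2+t_k^2)$, which is precisely the source of the $d^2$ coefficients in the statement. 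For the second term I would split $\hat\nabla f(x^{k-1})-m^{k-1}=(\hat\nabla f(x^{k-1})-\nabla f(x^{k-1}))+(\nabla f(x^{k-1})-m^{k-1})$ and invoke Lemma \ref{lemma3} evaluated at $x^{k-1}$ (legitimate because $u^k,\xi^k$ are independent of $\mathcal{F}_k$) together with \eqref{tmp3}; this yields the $\beta^2$-weighted terms carrying $\|m^{k-1}-\nabla f(x^{k-1})\|^2$, $\|\nabla f(\bar x^{k-1})\|^2$, and the $\sigma_1^2,\sigma_3^2$ contributions. The residual $t_k^2$ terms of order $\beta^2L^2$ come from the finite-difference bias $\|\nabla f_{i,t_k}-\nabla f_i\|\le t_kL$ of Lemma \ref{lemma1}, handled through a conditional-expectation cross term exactly as in the proofs of Lemmas \ref{lemma4} and \ref{lemma5}.

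It then remains to re-express $\|x^k-x^{k-1}\|^2$ in terms of index-$(k-1)$ quantities. Using the $x$-update together with the gradient-tracking identity $\bar x^k-\bar x^{k-1}=-\alpha\bar m^{k-1}$, I would bound $\|x^k-x^{k-1}\|^2\le 3\|x^k-1_n\otimes\bar x^k\|^2+3\|x^{k-1}-1_n\otimes\bar x^{k-1}\|^2+3\alpha^2 n\|\bar m^{k-1}\|^2$ and then feed the index-$k$ consensus term through Lemma \ref{lemma6} to rewrite it via $\|x^{k-1}-1_n\otimes\bar x^{k-1}\|^2$ and $\|g^k-1_n\otimes\bar g^k\|^2$. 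The main obstacle is that this last step reintroduces the very quantity $\|g^k-1_n\otimes\bar g^k\|^2$ on the right-hand side, now with a coefficient of order $\tfrac{\alpha^2 d^2L^2\rho^4}{(1-\rho^2)^2}$; the hypotheses $\alpha\le\tfrac{1-\rho^2}{12\sqrt2\,dL\rho^2}$ and $\beta\le\tfrac{\sqrt d}{\sqrt{27(1-\rho^2)(1+\sigma_0^2)}}$ are calibrated precisely so that this reappearing consensus error (note $(12\sqrt2)^2=288$ gives exactly the margin needed below $\tfrac{3+\rho^2}{4}$), along with the subdominant $\beta^2 d$ terms, is small enough to be reabsorbed while keeping the contraction coefficient on $\|g^k-1_n\otimes\bar g^k\|^2$ at $\tfrac{3+\rho^2}{4}$. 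Once this absorption is secured, the remaining work is careful but routine collection of the $d$-, $\rho$-, $\beta$-, $n$-, and $\sigma$-dependent constants to match the stated inequality.
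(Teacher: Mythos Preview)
Your overall architecture is right, and most of the pieces (the $\widetilde W$ contraction, the decomposition of $m^k-m^{k-1}$, feeding $\|x^k-x^{k-1}\|^2$ back through Lemma~\ref{lemma6}) match the paper's proof. But there is a genuine accounting gap in your contraction-coefficient step.

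You choose $\gamma=\tfrac{3(1-\rho^2)}{4\rho^2}$ so that $(1+\gamma)\rho^2=\tfrac{3+\rho^2}{4}$ \emph{exactly}. You then acknowledge that expanding $\|x^k-x^{k-1}\|^2$ through Lemma~\ref{lemma6} reintroduces $\|g^k-1_n\otimes\bar g^k\|^2$ on the right-hand side with a strictly positive coefficient of order $\tfrac{\alpha^2 d^2L^2\rho^4}{(1-\rho^2)^2}$, and you claim the hypothesis on $\alpha$ lets you ``reabsorb'' it while keeping the coefficient at $\tfrac{3+\rho^2}{4}$. That is impossible: you have already spent all the slack, and adding any positive quantity pushes you strictly above $\tfrac{3+\rho^2}{4}$. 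The paper avoids this by \emph{not} aiming for $\tfrac{3+\rho^2}{4}$ in the first split. Instead it expands the square and treats the cross term separately, obtaining $\tfrac{1+\rho^2}{2}\|g^k-1_n\otimes\bar g^k\|^2$; the reintroduced term (bounded by $\tfrac{1-\rho^2}{4}$ under the $\alpha$ condition) is then absorbed up to $\tfrac{3+\rho^2}{4}$. If you want to keep your triangle-plus-Young route, take $\gamma=\tfrac{1-\rho^2}{2\rho^2}$ (yielding $\tfrac{1+\rho^2}{2}$) and leave the headroom for reabsorption.

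A second, smaller difference: the paper handles the cross term $\langle \widetilde W(g^k-1_n\otimes\bar g^k),\widetilde W(m^k-m^{k-1})\rangle$ by taking conditional expectation \emph{before} bounding, which replaces $m^k-m^{k-1}$ by $\mathbb{E}[m^k\mid\mathcal{F}_k]-m^{k-1}$. The latter involves the smoothed gradients $\nabla f_{t_k}$ rather than the stochastic zeroth-order estimators, so it carries no extra factor of $d$; concretely the paper gets $3L^2\|x^k-x^{k-1}\|^2$ in this piece, not $3dL^2\|x^k-x^{k-1}\|^2$. Your direct triangle approach skips this refinement, and your use of Cauchy--Schwarz (rather than the identity $\mathbb{E}_u\|d\langle h,u\rangle u\|^2=d\|h\|^2$ from Lemma~\ref{lemma1}) would further inflate a $d$ to $d^2$. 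These choices can still be made to land within the stated constants, but they are cruder than what the paper does and are worth tightening.
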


\begin{proof}
Recall that
\begin{align*}
    g^{k+1} = (W\otimes I_d)(g^k + m^k - m^{k-1}).
\end{align*}
A simple computation yields that
\begin{align*}
    1_n\otimes \bar{g}^{k+1} = 1_n \otimes \left(\frac{1}{n}1_n^\top \otimes I_d\right) g^{k+1} = \frac{1}{n} \left(1_n1_n^\top \otimes I_d \right) g^{k+1}.
\end{align*}
Thus we have
\begin{align*}
    & \mathbb{E}\left[\left\|g^{k+1}-1_n \otimes \bar{g}^{k+1}\right\|^2 \mid \mathcal{F}_k\right]\\
    =&\; \mathbb{E}\left[\left\|(W\otimes I_d)(g^k +m^k - m^{k-1})-\frac{1}{n}(1_n1_n^\top \otimes I_d) (W\otimes I_d)(g^k +m^k - m^{k-1})\right\|^2 \mid \mathcal{F}_k\right]\\
    \stackrel{(a)}{=}&\; \mathbb{E}\left[\left\|(W\otimes I_d)(g^k +m^k - m^{k-1})-\frac{1}{n}(1_n1_n^\top \otimes I_d) (g^k +m^k - m^{k-1})\right\|^2 \mid \mathcal{F}_k\right]\\
    = &\; \mathbb{E}\left[\left\|\left(\left(W-\frac{1_n 1_n^\top}{n}\right) \otimes I_d\right)\left(g^k+m^k-m^{k-1}\right)\right\|^2 \mid \mathcal{F}_k\right] \\
    \stackrel{(b)}{=} &\; \mathbb{E}\left[\left\| \left(\left(W-\frac{1_n 1_n^\top}{n}\right) \otimes I_d \right)\left(g^k-1_n \otimes \bar{g}^k\right)\right\|^2 \mid \mathcal{F}_k\right]\\
    & \quad +\mathbb{E}\left[\left\|\left(\left(W-\frac{1_n 1_n^\top}{n}\right) \otimes I_d\right)\left(m^k-m^{k-1}\right)\right\|^2 \mid \mathcal{F}_k\right] \\
    & \quad +2 \mathbb{E}\left[\left\langle \left(\left(W-\frac{1_n 1_n^\top}{n}\right) \otimes I_d\right)\left(g^k-1_n \otimes \bar{g}^k\right),\left(\left(W-\frac{1_n 1_n^\top}{n}\right) \otimes I_d\right)\left(m^k-m^{k-1}\right)\right\rangle \mid \mathcal{F}_k\right] \\
    \leq &\; \rho^2 \mathbb{E}\left[\left\|g^k-1_n \otimes \bar{g}^k\right\|^2 \mid \mathcal{F}_k\right]\\
    & \quad +\underbrace{2 \mathbb{E}\left[\left\langle\left(\left(W-\frac{1_n 1_n^\top}{n}\right) \otimes I_d\right)\left(g^k-1_n \otimes \bar{g}^k\right),\left(\left(W-\frac{1_n 1_n^\top}{n}\right) \otimes I_d\right)\left(m^k-m^{k-1}\right)\right\rangle \mid \mathcal{F}_k\right]}_{:=T_1}\\
    &\quad+\rho^2 \underbrace{\mathbb{E}\left[\left\|m^k-m^{k-1}\right\|^2 \mid \mathcal{F}_k\right]}_{:=T_2},\numberthis\label{7.1}
    \end{align*}
    where step (a) is due to Assumption \ref{ass1}, i.e., $1_n^\top W = 1_n^\top$, step (b) follows from the fact that $((W -n^{-1}1_n1_n^\top)\otimes I_d)(1_n \otimes a) = 0$ for any $a\in\mathbb{R}^d$. Now, we turn to control $T_1$ and $T_2$ in \eqref{7.1}, respectively.
    \begin{itemize}
        \item Bounding $T_1$. For sake of clarity, we denote $\widetilde{W}=(W-n^{-1}1_n1_n^\top)\otimes I_d$. Then we have
    \begin{align*}
        T &= 2 \mathbb{E}\left[\left\langle\widetilde{W}\left(g^k-1_n \otimes \bar{g}^k\right),\widetilde{W}\left(m^k-m^{k-1}\right)\right\rangle \mid \mathcal{F}_k\right] \\
        &=2 \left\langle\widetilde{W}\left(g^k-1_n \otimes \bar{g}^k\right),\widetilde{W}\left(\mathbb{E}\left[m^k\mid \mathcal{F}_k\right]-m^{k-1}\right)\right\rangle\\
        &\leq 2\left\|\widetilde{W}\left(g^k-1_n \otimes \bar{g}^k\right)\right\|\cdot \left\|\widetilde{W}\left(\mathbb{E}\left[m^k\mid \mathcal{F}_k\right]-m^{k-1}\right)\right\|\\
        &\leq 2\cdot \rho \left\|g^k-1_n \otimes \bar{g}^k\right\|\cdot \rho\left\|\mathbb{E}\left[m^k\mid \mathcal{F}_k\right]-m^{k-1}\right\|\\
        &\stackrel{(a)}{\leq} \frac{1-\rho^2}{2}\cdot \rho^2\left\|g^k-1_n \otimes \bar{g}^k\right\|^2 + \frac{2}{1-\rho^2} \cdot  \rho^2\left\|\mathbb{E}\left[m^k\mid \mathcal{F}_k\right]-m^{k-1}\right\|^2\\
        &\stackrel{(b)}{\leq}\frac{1-\rho^2}{2}\left\|g^k-1_n \otimes \bar{g}^k\right\|^2 + \frac{2}{1-\rho^2} \cdot  \rho^2\left\|\mathbb{E}\left[m^k\mid \mathcal{F}_k\right]-m^{k-1}\right\|^2\\
        &\stackrel{(c)}{\leq}\frac{1-\rho^2}{2}\left\|g^k-1_n \otimes \bar{g}^k\right\|^2 + \frac{2\rho^2}{1-\rho^2}\left(3\beta^2 \left\| m^{k-1} - \nabla f(x^{k-1})\right\|^2 + 3L^2\left\|x^k - x^{k-1}\right\|^2 + 3\beta^2L^2 t_k^2\right)\\
        &= \frac{1-\rho^2}{2}\left\|g^k-1_n \otimes \bar{g}^k\right\|^2+\frac{6 \rho^2 L^2}{1-\rho^2}\left\|x^k-x^{k-1}\right\|^2+\frac{6 \rho^2 \beta^2}{1-\rho^2}\left\|m^{k-1}-\nabla f\left(x^{k-1}\right)\right\|^2+\frac{6 \rho^2 \beta^2 L^2}{1-\rho^2} t_k^2,\numberthis\label{7.2}
    \end{align*}
    where step (a) uses the elementary inequality that $2xy\leq a^{-1}x^2 + ay^2$ with $a=2/(1-\rho^2)$, step (b) follows from $\rho<1$, and step (c) is due to the fact that $\left\|\mathbb{E}\left[m^k\mid \mathcal{F}_k\right] - m^{k-1}\right\|^2\leq 3\beta^2 \left\| m^{k-1} - \nabla f(x^{k-1})\right\|^2 + 3L^2\left\|x^k - x^{k-1}\right\|^2 + 3\beta^2L^2 t_k^2$. Moreover, the fact used in step (c) can be proved as follows: recalling that $m^k = \beta \hat{\nabla}f(x^k;u^k,\xi^k)+ (1-\beta)(m^{k-1}+\hat{\nabla}f(x^k;u^k,\xi^k)-\hat{\nabla}f(x^{k-1};u^k,\xi^k))$, we have
    \begin{align*}
    &\quad \left\|\mathbb{E}\left[m^k\mid \mathcal{F}_k\right] - m^{k-1}\right\|^2 \\
    &= \left\|\beta \nabla f_{t_k}(x^k) + (1-\beta)(m^{k-1} + \nabla f_{t_k}(x^k) - \nabla f_{t_k}(x^{k-1})) - m^{k-1}\right\|^2\\
        &=\left\|-\beta m^{k-1}+ \nabla f_{t_k}(x^k) - (1-\beta) \nabla f_{t_k}(x^{k-1}) \right\|^2\\
        &=\left\|-\beta \left(m^{k-1} - \nabla f(x^{k-1})\right)+\left( \nabla f_{t_k}(x^k) -\nabla f_{t_k}(x^{k-1})\right) +\beta\left(\nabla f_{t_k}(x^{k-1}) -\nabla f(x^{k-1})\right)\right\|^2\\
        &\leq 3\beta^2 \left\| m^{k-1} - \nabla f(x^{k-1})\right\|^2 + 3\left\|\nabla f_{t_k}(x^k) -\nabla f_{t_k}(x^{k-1})\right\|^2 + 3\beta^2\left\|\nabla f_{t_k}(x^{k-1}) -\nabla f(x^{k-1})\right\|^2\\
        &\leq 3\beta^2 \left\| m^{k-1} - \nabla f(x^{k-1})\right\|^2 + 3L^2\left\|x^k - x^{k-1}\right\|^2 + 3\beta^2L^2 t_k^2,
    \end{align*}
    where $\nabla f_{t_k}(x^k)=\begin{bmatrix}
        \nabla f_{1,t_k}(x_1^k)^\top &\cdots & \nabla f_{n,t_k}(x_n^k)^\top
    \end{bmatrix}^\top$ and the last line follows from Lemma \ref{lemma1}.

    \item Bounding $T_2$. 
    A simple computation yields that
    \begin{align*}
    T_2 =&\; \mathbb{E}\bigg[\bigg\|\hat{\nabla} f\left(x^k ; u^k, \xi^k\right)-\hat{\nabla} f\left(x^{k-1} ; u^k, \xi^k\right)-\beta\left(m^{k-1}-\nabla f\left(x^{k-1}\right)\right) \\
    &\qquad+\beta\left(\hat{\nabla} f\left(x^{k-1} ; u^k, \xi^k\right)-\nabla f\left(x^{k-1}\right)\right)\bigg\|^2 \mid \mathcal{F}_k\bigg] \\
    \leq &\; 3 \mathbb{E}\left[\left\|\hat{\nabla} f\left(x^k ; u^k, \xi^k\right)-\hat{\nabla} f\left(x^{k-1} ; u^k, \xi^k\right)\right\|^2 \mid \mathcal{F}_k\right]+3 \beta^2 \mathbb{E}\left[\left\|m^{k-1}-\nabla f\left(x^{k-1}\right)\right\|^2 \mid \mathcal{F}_k\right] \\
    & +3 \beta^2 \mathbb{E}\left[\left\|\hat{\nabla} f\left(x^{k-1} ; u^k, \xi^k\right)-\nabla f\left(x^{k-1}\right)\right\|^2 \mid F_k\right] \\
    \leq &\; \frac{9}{2} n d^2 L^2 t_k^2+9 d L^2 \left\|x^k-x^{k-1}\right\|^2+3 \beta^2\left\|m^{k-1}-\nabla f\left(x^{k-1}\right)\right\|^2 \\
    & +3 \beta^2 n d^2 L^2 t_k^2+54 \beta^2 d L^2\left(1+\sigma_0^2\right)\left\|x^{k-1}-1_n \otimes \bar{x}^{k-1}\right\|^2+54 \beta^2 n d\left(1+\sigma_0^2\right)\left(1+\sigma_2^2\right)\left\|\nabla f\left(\bar{x}^{k-1}\right)\right\|^2 \\
    & +54 \beta^2 n d\left(1+\sigma_0^2\right) \sigma_3^2+12 \beta^2 n d \sigma_1^2 \\
    \leq &\; 8 n d^2 L^2 t_k^2+9 d L^2\left\|x^k-x^{k-1}\right\|^2+3 \beta^2\left\|m^{k-1}-\nabla f\left(x^{k-1}\right)\right\|^2 \\
    & +54 \beta^2 d L^2\left(1+\sigma_0^2\right)\left\|x^{k-1}-1_n \otimes \bar{x}^{k-1}\right\|^2+54 \beta^2 n d\left(1+\sigma_0^2\right)\left(1+\sigma_2^2\right)\left\|\nabla f\left(\bar{x}^{k-1}\right)\right\|^2 \\
    & +54 \beta^2 n d\left(1+\sigma_0^2\right) \sigma_3^2+12 \beta^2 n d \sigma_1^2,\numberthis\label{7.3}
    \end{align*}
    where the second inequality is due to \eqref{4.5} and \eqref{5.4}.
    \end{itemize}    
    Substituting \eqref{7.2} and \eqref{7.3} into \eqref{7.1}, and taking the total expectation, we obtain
    \begin{align*}
    \mathbb{E}\left\|g^{k+1}-1_n \otimes \bar{g}^{k+1}\right\|^2
    \leq & \frac{1+\rho^2}{2} \mathbb{E}\left\|g^k-1_n \otimes \bar{g}^{k}\right\|^2+\left(9 d L^2 \rho^2+\frac{6 \rho^2 L^2}{1-\rho^2}\right) \mathbb{E}\left\|x^k-x^{k-1}\right\|^2 \\
    + & \left(3 \beta^2 \rho^2+\frac{6 \rho^2 \beta^2}{1-\rho^2}\right) \mathbb{E}\left\|m^{k-1}-\nabla f\left(x^{k-1}\right)\right\|^2\\
    + &54 \beta^2 d L^2 \rho^2\left(1+\sigma_0^2\right) \mathbb{E}\left\|x^{k-1}-1_n \otimes \bar{x}^{k-1}\right\|^2 \\
    + & 54 \beta^2 n d \rho^2\left(1+\sigma_0^2\right)\left(1+\sigma_2^2\right) \mathbb{E}\left\|\nabla f\left(\bar{x}^{k-1}\right)\right\|^2+\left(8 n d^2 L^2 \rho^2+\frac{6 \rho^2 \beta^2 L^2}{1-\rho^2}\right) t_k^2 \\
    + & 54 \beta^2 n d \rho^2\left(1+\sigma_0^2\right) \sigma_3^2+12 \beta^2 n d \rho^2 \sigma_1^2.
    \end{align*}
    Furthermore, it can be seen that
    \begin{align*}
    \left\|x^k-x^{k-1}\right\|^2 & \leq 3\left\|x^k-1_n \otimes \bar{x}^k\right\|^2+3 \alpha^2 n\left\|\bar{m}^{k-1}\right\|^2+3\left\|x^{k-1}-1_n \otimes \bar{x}^{k-1}\right\|^2 \\
    & \leq 6 \rho^2 \alpha^2\left\|g^k-1_n \otimes \bar{g}^{k}\right\|^2+3 \alpha^2 n\left\|\bar{m}^{k-1}\right\|^2+9\left\|x^{k-1}-1_n \otimes \bar{x}^{k-1}\right\|^2.
    \end{align*}
    Thus, under the conditions $\alpha \leq \frac{1-\rho^2}{\sqrt{360d} L \rho^2}$ and $\beta \leq \frac{1}{\sqrt{54\left(1-\rho^2\right)\left(1+\sigma_0^2\right)}}$, we have
    \begin{align*}
    \mathbb{E}\left\|g^{k+1}-1_n \otimes \bar{g}^{k+1}\right\|^2 
    \leq &\; \left(\frac{1+\rho^2}{2}+\frac{90 \alpha^2 d L^2 \rho^4}{1-\rho^2}\right) \mathbb{E}\left\|g^k-1_n \otimes \bar{g}^k\right\|^2+\frac{45 \alpha^2 n d L^2 \rho^2}{1-\rho^2} \mathbb{E}\left\|\bar{m}^{k-1}\right\|^2 \\
    & +\left(3 \beta^2 \rho^2+\frac{6 \rho^2 \beta^2}{1-\rho^2}\right) \mathbb{E}\left\|m^{k-1}-\nabla f\left(x^{k-1}\right)\right\|^2\\
    &+\left(54 \beta^2 d L^2 \rho^2\left(1+\sigma_0^2\right)+\frac{135 d L^2 \rho^2}{1-\rho^2}\right) \mathbb{E}\left\|x^{k-1}-1_n \otimes \bar{x}^{k-1}\right\|^2 \\
    & +54 \beta^2 n d \rho^2\left(1+\sigma_0^2\right)\left(1+\sigma_2^2\right) \mathbb{E}\left\|\nabla f\left(\bar{x}^{k-1}\right)\right\|^2+\left(8 n d^2 L^2 \rho^2+\frac{6 \rho^2 \beta^2 L^2}{1-\rho^2}\right) t_k^2 \\
    & +54 \beta^2 n d \rho^2\left(1+\sigma_0^2\right) \sigma_3^2+12 \beta^2 n d \rho^2 \sigma_1^2 \\
    \leq &\; \frac{3+\rho^2}{4} \mathbb{E}\left\|g^k-1_n \otimes \bar{g}^k\right\|^2+\frac{45 \alpha^2 n d L^2 \rho^2}{1-\rho^2} \mathbb{E}\left\|\bar{m}^{k-1}\right\|^2 \\
    & +\frac{9 \beta^2 \rho^2}{1-\rho^2} \mathbb{E}\left\|m^{k-1}-\nabla f\left(x^{k-1}\right)\right\|^2+\frac{136 d L^2 \rho^2}{1-\rho^2} \mathbb{E}\left\|x^{k-1}-1_n \otimes \bar{x}^{k-1}\right\|^2 \\
    & +54 \beta^2 n d \rho^2\left(1+\sigma_0^2\right)\left(1+\sigma_2^2\right) \mathbb{E}\left\|\nabla f\left(\bar{x}^{k-1}\right)\right\|^2+\left(8 n d^2 L^2 \rho^2+\frac{6 \rho^2 \beta^2 L^2}{1-\rho^2}\right) t_k^2 \\
    & +54 \beta^2 n d \rho^2\left(1+\sigma_0^2\right) \sigma_3^2+12 \beta^2 n d \rho^2 \sigma_1^2. 
    \end{align*}
Thus we complete the proof.
\end{proof}

\begin{lemma}
\label{lemma8}
    If $V_{k} \leq q V_{k-1}+R_{k-1}+C, \forall k \geq 1$, where $q \in (0,1)$ then we have 
    $$
    \sum_{k=0}^K V_k \leq \frac{V_0}{1-q}+\frac{1}{1-q} \sum_{k=0}^{K-1} R_k+\frac{C K}{1-q}.
    $$
\end{lemma}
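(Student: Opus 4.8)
The plan is to establish the bound by summing the one-step recursion over the index $k$ and then rearranging, using only that $q\in(0,1)$ so that $1-q>0$. First I would write $S_K := \sum_{k=0}^K V_k$ and sum the hypothesis $V_k \le qV_{k-1} + R_{k-1} + C$ over $k=1,\dots,K$, which after reindexing the two sums on the right gives
\begin{align*}
\sum_{k=1}^K V_k \le q\sum_{k=1}^K V_{k-1} + \sum_{k=1}^K R_{k-1} + CK = q\sum_{k=0}^{K-1}V_k + \sum_{k=0}^{K-1}R_k + CK.
\end{align*}

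Next I would add $V_0$ to both sides, turning the left-hand side into $S_K$, and on the right I would recognize $\sum_{k=0}^{K-1}V_k = S_K - V_K$, which yields
\begin{align*}
S_K \le V_0 + q\left(S_K - V_K\right) + \sum_{k=0}^{K-1} R_k + CK.
\end{align*}
Since every $V_k$ appearing in the convergence analysis is a mean-squared error term and therefore nonnegative, $-qV_K \le 0$ and this term may be discarded. Collecting the $S_K$ contributions and dividing by $1-q>0$ then produces exactly
\begin{align*}
S_K \le \frac{V_0}{1-q} + \frac{1}{1-q}\sum_{k=0}^{K-1}R_k + \frac{CK}{1-q},
\end{align*}
which is the desired estimate.

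There is no genuinely hard step here; the only point requiring care is the implicit nonnegativity $V_K \ge 0$ invoked to drop $-qV_K$, which is automatic in every application of the lemma. If one wishes to avoid even this, an equivalent route is to unroll the recursion into the explicit form $V_k \le q^k V_0 + \sum_{j=0}^{k-1} q^{k-1-j}(R_j + C)$, established by a one-line induction that is valid because $q>0$ preserves the direction of the inequality. Summing this over $k=0,\dots,K$, interchanging the order of the resulting double sum, and bounding each geometric factor by $\sum_{\ell\ge 0} q^\ell = (1-q)^{-1}$ reproduces the same bound; this variant instead relies only on $R_j + C \ge 0$. Either way the proof is purely a telescoping/geometric-series bookkeeping argument.
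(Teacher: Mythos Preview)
Your primary argument (sum the recursion, add $V_0$, rearrange, drop $-qV_K$) is correct and is a genuinely different route from the paper's proof. The paper takes exactly the alternative you sketch at the end: it unrolls the recursion to $V_k \le q^k V_0 + \sum_{i=0}^{k-1} q^{k-1-i}R_i + C\sum_{i=0}^{k-1} q^i$, sums over $k$, swaps the double sum, and bounds each resulting geometric factor by $(1-q)^{-1}$. Your telescoping approach is slightly cleaner in that it avoids the double sum entirely and needs only $V_K\ge 0$; the paper's unrolling implicitly uses $V_0\ge 0$, $R_k\ge 0$, $C\ge 0$ when it extends the finite geometric sums to infinite ones, though it does yield the pointwise bound on each $V_k$ as a byproduct. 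In the applications (Lemmas~\ref{lemma9}--\ref{lemma11}) all quantities are nonnegative, so both variants are valid there.
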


\begin{proof}
    For $k \ge 1$,
    \begin{align*}
    V_k & \leq q V_{k-1}+R_{k-1}+C \\
    & \leq q^2 V_{k-2}+q R_{k-2}+R_{k-1}+q C+C \\
    & \cdots \\
    & \leq q^k V_0+\sum_{i=0}^{k-1} q^{k-1-i} R_i+C \sum_{i=0}^{k-1} q^i.
    \end{align*}
    Then we have
    \begin{align*}
    \sum_{k=0}^K V_k & \leq V_0 \sum_{k=0}^K q^k+\sum_{k=1}^K \sum_{i=0}^{k-1} q^{k-1-i} R_i+c \sum_{k=1}^K \sum_{i=0}^{k-1} q^i \\
    & \leq V_0 \sum_{k=0}^{\infty} q^k+\sum_{k=0}^{K-1}\left(\sum_{i=0}^{\infty} q^i\right) R_k+c \sum_{k=1}^K \sum_{i=0}^{\infty} q^i \\
    & =\frac{V_0}{1-q}+\frac{1}{1-q} \sum_{k=0}^{K-1} R_k+\frac{C K}{1-q}.
    \end{align*}

\vspace{-30pt}
\end{proof}

\begin{lemma}
\label{lemma9} 
For any $K\geq 0$, one has
    \begin{align*}
    \sum_{k=0}^K \mathbb{E}\left\|\bar{m}^k-\nabla \bar{f}^k\right\|^2 \leq & \frac{2}{\beta} \mathbb{E}\left\|\bar{m}^0-\nabla \bar{f}^0\right\|^2+\left(\frac{64 \beta d}{n}\left(1+\sigma_0^2\right)\left(1+\sigma_2^2\right)+48 \beta\right) \sum_{k=0}^K \mathbb{E}\left\|\nabla f\left(\bar{x}^k\right)\right\|^2 \\
    & +\frac{72 \alpha^2(1-\beta)^2 d L^2}{n\beta} \sum_{k=0}^{K-1} \mathbb{E}\left\|\bar{m}^k\right\|^2 \\
    & +\left(\frac{32 \beta d L^2\left(1+\sigma_0^2\right)}{n^2}+\frac{48 \beta L^2}{n} +\frac{144(1-\beta)^2 d L^2}{n^2 \beta} \right) \sum_{k=0}^K \mathbb{E}\left\|x^k-1_n \otimes \bar{x}^k\right\|^2 \\
    & +\left(4 \beta L^2 d^2+\frac{32(1-\beta)^2 L^2}{\beta}+\frac{12(1-\beta)^2 d^2 L^2}{n\beta}+ \frac{4(1-\beta)^2(2-\beta) L^2}{\beta^2}\right) \sum_{k=0}^K t_k^2 \\
    & +\frac{16 \beta d}{n} K \sigma_1^2+\frac{64 \beta d}{n} K\left(1+\sigma_0^2\right) \sigma_3^2\numberthis\label{9.1}
    \end{align*}
    and
    \begin{align*}
    \sum_{k=0}^K \mathbb{E}\left\|m^k-\nabla f\left(x^k\right)\right\|^2 \leq & \frac{2}{\beta} \mathbb{E}\left\|m^0-\nabla f\left(x^0\right)\right\|^2+72 \beta n d\left(1+\sigma_0^2\right)\left(1+\sigma_2^2\right) \sum_{k=0}^K \mathbb{E}\left\|\nabla f\left(\bar{x}^k\right)\right\|^2 \\
    & +\left(72 \beta d L^2\left(1+\sigma_0^2\right)+\frac{192(1-\beta)^2 d L^2}{\beta}\right) \sum_{k=0}^K \mathbb{E}\left\|x^k-1_n \otimes \bar{x}^k\right\|^2 \\
    & +\frac{96 \alpha^2(1-\beta)^2 d L^2 n}{\beta} \sum_{k=0}^{K-1} \mathbb{E}\left\|\bar{m}^k\right\|^2+72 \beta n d K\left(1+\sigma_0^2\right) \sigma_3^2+16 \beta n d K \sigma_1^2 \\
    & +\left(4 \beta n d^2 L^2+\frac{12(1-\beta)^2 n d^2 L^2}{\beta}+\frac{16 n L^2(1-\beta)}{\beta^2(2-\beta)}\right) \sum_{k=0}^K t_k^2.\numberthis\label{9.2}
    \end{align*}
\end{lemma}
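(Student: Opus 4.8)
The plan is to read both inequalities as direct consequences of the summation Lemma~\ref{lemma8} applied to the one-step recursions already established. For the first bound \eqref{9.1} I would take the recursion of Lemma~\ref{lemma4} with $V_k=\mathbb{E}\|\bar m^k-\nabla\bar f^k\|^2$, and for the second bound \eqref{9.2} the recursion of Lemma~\ref{lemma5} with $V_k=\mathbb{E}\|m^k-\nabla f(x^k)\|^2$. Both recursions share the identical contraction factor $q=\tfrac{1+(1-\beta)^2}{2}$, so the first move is simply to record that $1-q=\tfrac{\beta(2-\beta)}{2}$, whence $\tfrac{1}{1-q}=\tfrac{2}{\beta(2-\beta)}\le\tfrac{2}{\beta}$ because $0\le\beta<1$ forces $2-\beta\ge1$. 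This single estimate is what produces every $2/\beta$ prefactor in the conclusion, including the leading $\tfrac{2}{\beta}\mathbb{E}\|\bar m^0-\nabla\bar f^0\|^2$ and $\tfrac{2}{\beta}\mathbb{E}\|m^0-\nabla f(x^0)\|^2$ terms arising from $V_0/(1-q)$.

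The only genuine care is bookkeeping the time indices. The remainders in Lemmas~\ref{lemma4} and~\ref{lemma5} are not purely of the form $R_{k-1}$: besides the time-$k$ quantities $\mathbb{E}\|\nabla f(\bar x^k)\|^2$, $\mathbb{E}\|x^k-1_n\otimes\bar x^k\|^2$ and $t_k^2$, they also carry genuine time-$(k-1)$ quantities, namely $\mathbb{E}\|\bar m^{k-1}\|^2$ and a \emph{second} consensus-error term $\mathbb{E}\|x^{k-1}-1_n\otimes\bar x^{k-1}\|^2$, together with the constant contributions in $\sigma_1^2,\sigma_3^2$. I would lump all variable terms of the step-$k$ bound into the single remainder $R_{k-1}$ required by Lemma~\ref{lemma8} and route the $\sigma_1^2,\sigma_3^2$ pieces into the constant $C$, so that the lemma returns $\sum_{k=0}^K V_k\le \tfrac{V_0}{1-q}+\tfrac{1}{1-q}\sum_{k=0}^{K-1}R_k+\tfrac{CK}{1-q}$. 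The constant channel then immediately yields the $\tfrac{16\beta d}{n}K\sigma_1^2+\tfrac{64\beta d}{n}K(1+\sigma_0^2)\sigma_3^2$ line in \eqref{9.1} (and its analogue for \eqref{9.2}) after multiplying $C$ by $\tfrac{2}{\beta}$ and by $K$.

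It remains to unwind $\sum_{k=0}^{K-1}R_k=\sum_{k=1}^K(\text{step-}k\text{ remainder})$. The time-$k$ summands run over $k=1,\dots,K$ and are harmlessly enlarged to $\sum_{k=0}^K$ since every summand is nonnegative, while each time-$(k-1)$ summand re-indexes to $\sum_{k=0}^{K-1}$; this is exactly why $\mathbb{E}\|\bar m^k\|^2$ appears over the truncated range $\sum_{k=0}^{K-1}$ in the final bound, whereas the gradient-norm and $t_k^2$ terms appear over the full range $\sum_{k=0}^K$. The one point to watch is the consensus error: after the shift the time-$(k-1)$ consensus coefficient lands on $\sum_{k=0}^{K-1}\mathbb{E}\|x^k-1_n\otimes\bar x^k\|^2$, which I would enlarge to $\sum_{k=0}^K$ and merge with the time-$k$ consensus coefficient, producing the single combined coefficient stated (for instance the $\tfrac{144(1-\beta)^2 dL^2}{n^2\beta}$ and $\tfrac{192(1-\beta)^2 dL^2}{\beta}$ terms, each being $\tfrac{2}{\beta}$ times the sum of the two original coefficients). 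Multiplying every resulting variable coefficient by $\tfrac{1}{1-q}\le\tfrac{2}{\beta}$ then reproduces both displayed inequalities. There is no analytic difficulty here—the argument is purely substitution of the known recursions into Lemma~\ref{lemma8}—so the main, indeed only, obstacle is to carry out the index shift and the merging of the two consensus coefficients without arithmetic slips.
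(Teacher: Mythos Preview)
Your proposal is correct and follows essentially the same approach as the paper: the paper's proof consists of the single observation $\tfrac{1}{1-q}=\tfrac{2}{\beta(2-\beta)}<\tfrac{2}{\beta}$ for $q=\tfrac{1+(1-\beta)^2}{2}$, followed by the one-line assertion that applying Lemma~\ref{lemma8} to Lemma~\ref{lemma4} (resp.\ Lemma~\ref{lemma5}) gives \eqref{9.1} (resp.\ \eqref{9.2}). Your write-up simply fills in the index-shifting and coefficient-merging details that the paper leaves implicit.
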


\begin{proof}
    Notice that $$
    \frac{1}{1-\frac{1+(1-\beta)^2}{2}}=\frac{2}{1-(1-\beta)^2}=\frac{2}{\beta(2-\beta)}<\frac{2}{\beta}.
    $$
    Then applying Lemma \ref{lemma8} to Lemma \ref{lemma4} leads to \eqref{9.1}. Similarly, applying Lemma \ref{lemma8} to Lemma \ref{lemma5}, we have \eqref{9.2}.
\end{proof}

\begin{lemma}
\label{lemma10}
For any $K\geq 1$, one has
    \begin{align*}
    \sum_{k=1}^K \mathbb{E}\left\|g^k-1_n \otimes \bar{g}^k\right\|^2 
    \leq &\; \frac{4}{1-\rho^2} \mathbb{E}\left\|g^1-1_n \otimes \bar{g}^1\right\|^2+\frac{72 \beta \rho^2}{\left(1-\rho^2\right)^2} \mathbb{E}\left\|m^0-\nabla f\left(x^0\right)\right\|^2 \\
    & +\frac{10052 d L^2 \rho^2}{\left(1-\rho^2\right)^2} \sum_{k=0}^{K-2} \mathbb{E}\left\|x^k-1_n \otimes \bar{x}^k\right\|^2+\frac{3700 \alpha^2 n d L^2 \rho^2}{\left(1-\rho^2\right)^2} \sum_{k=0}^{K-2} \mathbb{E}\left\|\bar{m}^{k}\right\|^2 \\
    & +\frac{2808 \beta^2 n d \rho^2\left(1+\sigma_0^2\right)\left(1+\sigma_2^2\right)}{\left(1-\rho^2\right)^2} \sum_{k=0}^{K-2} \mathbb{E}\left\|\nabla f\left(\bar{x}^k\right)\right\|^2+\frac{1208 n d^2 L^2 \rho^2}{\left(1-\rho^2\right)^2} \sum_{k=0}^{K-1} t_k^2 \\
    & +\left(\frac{216 \beta^2 n d \rho^2}{1-\rho^2}+\frac{2600 \beta^3 n d \rho^2}{\left(1-\rho^2\right)^2}\right) K\left(1+\sigma_0^2\right) \sigma_3^2+\left(\frac{48 \beta^2 n d \rho^2}{1-\rho^2}+\frac{576 \beta^3 n d \rho^2}{\left(1-\rho^2\right)^2}\right) K \sigma_1^2. \numberthis\label{10.1}
    \end{align*}
\end{lemma}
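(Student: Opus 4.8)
The plan is to read Lemma~\ref{lemma7} as a one-step contraction $V_{k+1}\le q V_k + R_{k-1} + C$ for $k\ge 1$, where $V_k := \mathbb{E}\|g^k - 1_n\otimes \bar{g}^k\|^2$, the contraction factor is $q = (3+\rho^2)/4\in(0,1)$ so that $1/(1-q) = 4/(1-\rho^2)$, the driving term $R_{k-1}$ collects the quantities evaluated at index $k-1$ (namely $\mathbb{E}\|\bar{m}^{k-1}\|^2$, $\mathbb{E}\|m^{k-1}-\nabla f(x^{k-1})\|^2$, $\mathbb{E}\|x^{k-1}-1_n\otimes\bar{x}^{k-1}\|^2$, $\mathbb{E}\|\nabla f(\bar{x}^{k-1})\|^2$) together with $t_k^2$, and $C$ gathers the $\sigma_1^2$ and $\sigma_3^2$ terms. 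First I would iterate this recursion and sum the resulting geometric series exactly as in Lemma~\ref{lemma8}, taking $V_1$ as the base (the initialization gives $g^0=0$, hence $V_0=0$). The base contribution telescopes to $\tfrac{1}{1-q}V_1 = \tfrac{4}{1-\rho^2}\mathbb{E}\|g^1-1_n\otimes\bar{g}^1\|^2$, which is precisely the leading term of the claim, and every driving term and the constant pick up the same factor $4/(1-\rho^2)$. Because the state-dependent entries of $R$ sit at index $k-1$ while the smoothing radius enters as $t_k^2$, the former accumulate over $\sum_{k=0}^{K-2}$ and the latter over (at most) $\sum_{k=0}^{K-1}$, matching the two distinct summation ranges in the statement.

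After this step the bound still carries $\tfrac{36\beta^2\rho^2}{(1-\rho^2)^2}\sum_k \mathbb{E}\|m^{k-1}-\nabla f(x^{k-1})\|^2$, a quantity absent from the right-hand side of Lemma~\ref{lemma10}. The second step is therefore to eliminate it by invoking the momentum-error bound \eqref{9.2} of Lemma~\ref{lemma9}, which controls $\sum_k\mathbb{E}\|m^k-\nabla f(x^k)\|^2$ by an initial term $\tfrac{2}{\beta}\mathbb{E}\|m^0-\nabla f(x^0)\|^2$ plus sums of $\mathbb{E}\|\nabla f(\bar{x}^k)\|^2$, $\mathbb{E}\|x^k-1_n\otimes\bar{x}^k\|^2$, $\mathbb{E}\|\bar{m}^k\|^2$, $\sum t_k^2$ and $\sigma$-constants. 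Substituting this in, the product $\tfrac{36\beta^2\rho^2}{(1-\rho^2)^2}\cdot\tfrac2\beta = \tfrac{72\beta\rho^2}{(1-\rho^2)^2}$ yields exactly the $\mathbb{E}\|m^0-\nabla f(x^0)\|^2$ term of the claim, while the remaining products (each carrying an extra $\beta^2$) generate the additional $\beta^3$ contributions and enlarge the coefficients of the $x$-, $\bar{m}$-, $\nabla f$- , $t_k^2$- and $\sigma$-sums.

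Finally I would merge the two groups of coefficients and round the absolute constants upward to the quoted clean values. For instance, the $\nabla f(\bar{x}^k)$ coefficient combines the direct $\tfrac{4}{1-\rho^2}\cdot 54\beta^2 = \tfrac{216\beta^2}{1-\rho^2}$ with the substituted $\tfrac{36\beta^2\rho^2}{(1-\rho^2)^2}\cdot 72\beta = \tfrac{2592\beta^3}{(1-\rho^2)^2}$; using $1-\rho^2\le 1$ on the first and $\beta\le 1$ on the second gives $216+2592=2808$, i.e.\ the stated $\tfrac{2808\beta^2 nd\rho^2(1+\sigma_0^2)(1+\sigma_2^2)}{(1-\rho^2)^2}$, and the $\sigma_1^2,\sigma_3^2$ coefficients split in the same $\beta^2/(1-\rho^2)+\beta^3/(1-\rho^2)^2$ pattern. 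The $x$- and $\bar{m}$-coefficients assemble identically, with the residual $(1+\sigma_0^2)$ and $(1-\beta)^2$ factors absorbed through $\beta\le 1$ and the standing step-size/momentum restrictions carried over from Lemma~\ref{lemma7}.

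I expect the main obstacle to be purely the constant bookkeeping rather than any conceptual difficulty. One must correctly track how the single $1/(1-\rho^2)$ factors of Lemma~\ref{lemma7} compound with the $1/(1-q)=4/(1-\rho^2)$ of the geometric summation to produce the $(1-\rho^2)^2$ denominators, and simultaneously keep the two-index structure (index-$(k-1)$ state terms versus $t_k^2$) consistent so that the ranges $\sum_{k=0}^{K-2}$ and $\sum_{k=0}^{K-1}$ emerge correctly. The cross-substitution of \eqref{9.2}, which doubles the number of terms and mixes $\beta^2$ with $\beta^3$ powers, is where an arithmetic slip is most likely, so I would organize the work by listing, for each of the five target quantities, its \emph{direct} coefficient and its \emph{via-Lemma~\ref{lemma9}} coefficient before adding and bounding.
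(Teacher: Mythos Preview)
Your proposal is correct and follows essentially the same route as the paper: first apply the geometric-summation Lemma~\ref{lemma8} to the one-step recursion of Lemma~\ref{lemma7} (with $q=(3+\rho^2)/4$, hence the $4/(1-\rho^2)$ factor and the index shift producing $\sum_{k=0}^{K-2}$ for the state terms versus $\sum_{k=0}^{K-1}$ for $t_k^2$), then substitute \eqref{9.2} to eliminate $\sum_k\mathbb{E}\|m^k-\nabla f(x^k)\|^2$, and finally absorb constants using $\beta\le 1$ and $1-\rho^2\le 1$. Your sample computation for the $\nabla f(\bar{x}^k)$ coefficient ($216+2592=2808$) matches the paper's intermediate arithmetic exactly.
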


\begin{proof}
    Combining Lemma \ref{lemma7} and Lemma \ref{lemma8} together, we have
    \begin{align*}
    \sum_{k=1}^K E\left\|g^k-1_n \otimes \bar{g}^{k}\right\|^2 
    \leq &\; \frac{4}{1-\rho^2} \mathbb{E}\left\|g^1-1_n \otimes \bar{g}^1\right\|^2+\frac{36 \beta^2 \rho^2}{\left(1-\rho^2\right)^2} \sum_{k=0}^{K-2} \mathbb{E}\left\|m^k-\nabla f\left(x^k\right)\right\|^2 \\
    & +\frac{180 \alpha^2 n d L^2 \rho^2}{\left(1-\rho^2\right)^2} \sum_{k=0}^{K-2} \mathbb{E}\left\|\bar{m}^k\right\|^2+\frac{544 d L^2 \rho^2}{\left(1-\rho^2\right)^2} \sum_{k=0}^{K-2} \mathbb{E}\left\|x^k-1_n \otimes \bar{x}^k\right\|^2 \\
    & +\frac{216 \beta^2 n d \rho^2\left(1+\sigma_0^2\right)\left(1+\sigma_2^2\right)}{1-\rho^2} \sum_{k=0}^{K-2} \mathbb{E}\left\|\nabla f\left(\bar{x}^k\right)\right\|^2+\left(\frac{32 n d^2 L^2 \rho^2}{1-\rho^2}+\frac{24 \beta^2 \rho^2 L^2}{\left(1-\rho^2\right)^2}\right) \sum_{k=0}^{K-1} t_k^2 \\
    & +\frac{216 \beta^2 n d \rho^2}{1-\rho^2} K\left(1+\sigma_0^2\right) \sigma_3^2+\frac{48 \beta^2 n d \rho^2}{1-\rho^2} K \sigma_1^2.
    \end{align*}
    Substituting equation \eqref{9.2} into the above equation yields
    \begin{align*}
    \sum_{k=1}^K \mathbb{E}\left\|g^k-1_n \otimes \bar{g}^k\right\|^2 
    \leq &\; \frac{4}{1-\rho^2} \mathbb{E}\left\|g^1-1_n \otimes \bar{g}^1\right\|^2+\frac{72 \beta \rho^2}{\left(1-\rho^2\right)^2} \mathbb{E}\left\|m^0-\nabla f\left(x^0\right)\right\|^2 \\
    & +\left(\frac{544 d L^2 \rho^2}{\left(1-\rho^2\right)^2} + \frac{2592 \beta^3 d L^2 \rho^2\left(1+\sigma_0^2\right)}{\left(1-\rho^2\right)^2}+\frac{6912 \beta\left(1-\beta\right)^2 d L^2 \rho^2}{\left(1-\rho^2\right)^2}\right) \sum_{k=0}^{K-2} \mathbb{E}\left\|x^k-1_n \otimes \bar{x}^k\right\|^2 \\
    & +\left(\frac{180 \alpha^2 n d L^2 \rho^2}{\left(1-\rho^2\right)^2}+\frac{3456 \alpha^2 \beta(1-\beta)^2 n d L^2 \rho^2}{\left(1-\rho^2\right)^2}\right) \sum_{k=0}^{K-2} \mathbb{E}\left\|\bar{m}^k\right\|^2 \\
    & +\left(\frac{216 \beta^2 n d \rho^2\left(1+\sigma_0^2\right)\left(1+\sigma_2^2\right)}{1-\rho^2}+\frac{2592 \beta^3 n d \rho^2\left(1+\sigma_0^2\right)\left(1+\sigma_2^2\right)}{\left(1-\rho^2\right)^2}\right) \sum_{k=0}^{K-2} \mathbb{E}\left\|\nabla f\left(\bar{x}^k\right)\right\|^2 \\
    & +\left(\frac{32 n d^2 L^2 \rho^2}{1-\rho^2}+\frac{24 \beta^2 \rho^2 L^2}{\left(1-\rho^2\right)^2}+\frac{144 \beta^3 n d^2 L^2 \rho^2}{\left(1-\rho^2\right)^2} \right. \\
    & \quad \quad \left. +\frac{432 \beta(1-\beta)^2 n d^2 L^2 \rho^2}{\left(1-\rho^2\right)^2}+\frac{576 n L^2(1-\beta) \rho^2}{\left(1-\rho^2\right)^2}\right) \sum_{k=0}^{K-1} t_k^2 \\
    & +\left(\frac{216 \beta^2 n d \rho^2}{1-\rho^2}+\frac{2592 \beta^3 n d \rho^2}{\left(1-\rho^2\right)^2}\right) K\left(1+\sigma_0^2\right) \sigma_3^2+\left(\frac{48 \beta^2 n d \rho^2}{1-\rho^2}+\frac{576 \beta^3 n d \rho^2}{\left(1-\rho^2\right)^2}\right) K \sigma_1^2 \\
    \leq &\; \frac{4}{1-\rho^2} \mathbb{E}\left\|g^1-1_n \otimes \bar{g}^1\right\|^2+\frac{72 \beta \rho^2}{\left(1-\rho^2\right)^2} \mathbb{E}\left\|m^0-\nabla f\left(x^0\right)\right\|^2 \\
    & +\frac{10052 d L^2 \rho^2}{\left(1-\rho^2\right)^2} \sum_{k=0}^{K-2} \mathbb{E}\left\|x^k-1_n \otimes \bar{x}^k\right\|^2+\frac{3700 \alpha^2 n d L^2 \rho^2}{\left(1-\rho^2\right)^2} \sum_{k=0}^{K-2} \mathbb{E}\left\|\bar{m}^{k}\right\|^2 \\
    & +\frac{2808 \beta^2 n d \rho^2\left(1+\sigma_0^2\right)\left(1+\sigma_2^2\right)}{\left(1-\rho^2\right)^2} \sum_{k=0}^{K-2} \mathbb{E}\left\|\nabla f\left(\bar{x}^k\right)\right\|^2+\frac{1208 n d^2 L^2 \rho^2}{\left(1-\rho^2\right)^2} \sum_{k=0}^{K-1} t_k^2 \\
    & +\left(\frac{216 \beta^2 n d \rho^2}{1-\rho^2}+\frac{2600 \beta^3 n d \rho^2}{\left(1-\rho^2\right)^2}\right) K\left(1+\sigma_0^2\right) \sigma_3^2+\left(\frac{48 \beta^2 n d \rho^2}{1-\rho^2}+\frac{576 \beta^3 n d \rho^2}{\left(1-\rho^2\right)^2}\right) K \sigma_1^2,
    \end{align*}
which completes the proof.
\end{proof}

\begin{lemma}
\label{lemma11}
 Suppose $\alpha \leq \frac{\left(1-\rho^2\right)^2}{284 \sqrt{d} L \rho^2}$. One has
    \begin{align*}
    & \sum_{k=0}^K\left\|x^k-1_n \otimes \bar{x}^k\right\|^2 \\
    \leq &\; \frac{32 \alpha^2 \rho^2}{\left(1-\rho^2\right)^3} \mathbb{E}\left\|g^1-1_n \otimes \bar{g}^1\right\|^2+\frac{576 \alpha^2 \beta \rho^4}{\left(1-\rho^2\right)^4} \mathbb{E}\left\|m^0-\nabla f\left(x^0\right)\right\|^2 +\frac{29600 \alpha^4 n d L^2 \rho^4}{\left(1-\rho^2\right)^4} \sum_{k=0}^{K-2} \mathbb{E}\left\|\bar{m}^k\right\|^2 \\
    & +\frac{22600 \alpha^2 \beta^2 n d \rho^4\left(1+\sigma_0^2\right)\left(1+\sigma_2^2\right)}{\left(1-\rho^2\right)^4} \sum_{k=0}^{K-2} \mathbb{E}\left\|\nabla f\left(\bar{x}^k\right)\right\|^2 +\frac{10000 \alpha^2 n d^2 L^2 \rho^4}{\left(1-\rho^2\right)^4} \sum_{k=0}^{K-1} t_k^2 \\
    & +\left(\frac{1800 \alpha^2 \beta^2 n d \rho^4}{\left(1-\rho^2\right)^3}+\frac{20800 \alpha^2 \beta^3 n d \rho^4}{\left(1-\rho^2\right)^4}\right) K\left(1+\sigma_0^2\right) \sigma_3^2+\left(\frac{400 \alpha^2 \beta^2 n d \rho^4}{\left(1-\rho^2\right)^3}+\frac{4700 \alpha^2 \beta^3 n d \rho^4}{\left(1-\rho^2\right)^4}\right) K \sigma_1^2. \numberthis\label{11.1}
    \end{align*}
\end{lemma}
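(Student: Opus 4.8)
The plan is to derive a self-contained recursion for the consensus error $\sum_{k=0}^K \mathbb{E}\|x^k - 1_n\otimes \bar{x}^k\|^2$ by chaining Lemma \ref{lemma6}, Lemma \ref{lemma8}, and Lemma \ref{lemma10}, and then to exploit the smallness of $\alpha$ to absorb the consensus error that reappears on the right-hand side. The crucial observation is that the bound \eqref{10.1} of Lemma \ref{lemma10} on the gradient-tracking consensus error $\sum_k \mathbb{E}\|g^k - 1_n\otimes\bar{g}^k\|^2$ itself contains a $\sum_k \mathbb{E}\|x^k - 1_n\otimes\bar{x}^k\|^2$ term, so a naive substitution produces the same quantity on both sides; the step-size restriction $\alpha \le (1-\rho^2)^2/(284\sqrt{d}\,L\rho^2)$ is precisely what makes the resulting self-coefficient smaller than $1/2$, so that the term can be absorbed into the left-hand side.

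First I would apply Lemma \ref{lemma8} to the one-step recursion of Lemma \ref{lemma6}. Writing $V_k = \mathbb{E}\|x^k - 1_n\otimes\bar{x}^k\|^2$, Lemma \ref{lemma6} reads $V_k \le \tfrac{1+\rho^2}{2}V_{k-1} + \tfrac{\alpha^2(1+\rho^2)\rho^2}{1-\rho^2}\mathbb{E}\|g^k - 1_n\otimes\bar{g}^k\|^2$ for $k\ge 1$, which is exactly the form required by Lemma \ref{lemma8} with $q = \tfrac{1+\rho^2}{2}$ (so $1-q = \tfrac{1-\rho^2}{2}$), additive constant $C=0$, and $V_0 = 0$ by the consensus initialization $x_i^0 = x^0$. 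Using $1+\rho^2 \le 2$, this yields
\begin{align*}
\sum_{k=0}^K V_k \le \frac{4\alpha^2\rho^2}{(1-\rho^2)^2}\sum_{k=1}^K \mathbb{E}\left\|g^k - 1_n\otimes\bar{g}^k\right\|^2.
\end{align*}
Next I would substitute \eqref{10.1} into this right-hand side. Every term of \eqref{10.1} except the consensus term $\tfrac{10052\,dL^2\rho^2}{(1-\rho^2)^2}\sum_{k=0}^{K-2}V_k$ contributes, after multiplication by $\tfrac{4\alpha^2\rho^2}{(1-\rho^2)^2}$, directly to one of the terms in the stated bound; for instance the $\|g^1-1_n\otimes\bar{g}^1\|^2$ coefficient $\tfrac{4}{1-\rho^2}$ becomes $\tfrac{16\alpha^2\rho^2}{(1-\rho^2)^3}$, the $\|m^0-\nabla f(x^0)\|^2$ coefficient $\tfrac{72\beta\rho^2}{(1-\rho^2)^2}$ becomes $\tfrac{288\alpha^2\beta\rho^4}{(1-\rho^2)^4}$, and the $\|\bar{m}^k\|^2$ coefficient $\tfrac{3700\alpha^2 ndL^2\rho^2}{(1-\rho^2)^2}$ becomes $\tfrac{14800\alpha^4 ndL^2\rho^4}{(1-\rho^2)^4}$.

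The main obstacle is the absorption step. The consensus term reproduces $\sum_{k=0}^{K-2}V_k \le \sum_{k=0}^K V_k$ on the right with coefficient $A := \tfrac{4\alpha^2\rho^2}{(1-\rho^2)^2}\cdot\tfrac{10052\,dL^2\rho^2}{(1-\rho^2)^2} = \tfrac{40208\,\alpha^2 dL^2\rho^4}{(1-\rho^2)^4}$. Under $\alpha \le (1-\rho^2)^2/(284\sqrt{d}\,L\rho^2)$ one has $\alpha^2 \le (1-\rho^2)^4/(284^2 dL^2\rho^4)$, whence $A \le 40208/284^2 = 40208/80656 < \tfrac12$; this is exactly why the constant $284$ is chosen. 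Moving $A\sum_{k=0}^K V_k$ to the left gives $(1-A)\sum V_k \le (\text{remaining terms})$, and since $\tfrac{1}{1-A}\le 2$ I may simply double every remaining coefficient — turning, e.g., $\tfrac{16\alpha^2\rho^2}{(1-\rho^2)^3}$ into $\tfrac{32\alpha^2\rho^2}{(1-\rho^2)^3}$, $\tfrac{288\alpha^2\beta\rho^4}{(1-\rho^2)^4}$ into $\tfrac{576\alpha^2\beta\rho^4}{(1-\rho^2)^4}$, and $\tfrac{14800\alpha^4 ndL^2\rho^4}{(1-\rho^2)^4}$ into $\tfrac{29600\alpha^4 ndL^2\rho^4}{(1-\rho^2)^4}$, matching the claimed bound. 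The only remaining work is bookkeeping: the various index ranges ($\sum_{k=0}^{K-2}$, $\sum_{k=0}^{K-1}$, $\sum_{k=0}^K$) appearing in \eqref{10.1} are all dominated by the full sums they are reported against, and the numerical constants are rounded up (for example $2\cdot 4832 = 9664$ rounded to $10000$ for the $\sum t_k^2$ term, and similarly for the $\sigma_3^2$ and $\sigma_1^2$ terms) to produce the clean constants stated in \eqref{11.1}.
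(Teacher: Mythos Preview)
Your proposal is correct and follows essentially the same route as the paper: apply Lemma~\ref{lemma8} to the recursion of Lemma~\ref{lemma6} (using $V_0=0$ from the common initialization and $1+\rho^2\le 2$) to bound $\sum_k \mathbb{E}\|x^k-1_n\otimes\bar x^k\|^2$ by $\tfrac{4\alpha^2\rho^2}{(1-\rho^2)^2}\sum_k \mathbb{E}\|g^k-1_n\otimes\bar g^k\|^2$, substitute Lemma~\ref{lemma10}, and absorb the self-referencing term via the step-size condition, which yields exactly the coefficient $40208\alpha^2 dL^2\rho^4/(1-\rho^2)^4 \le 1/2$ and hence the doubling of all remaining constants. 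Your identification of the role of the constant $284$ and the subsequent rounding of the numerical constants match the paper's computation.
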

    
\begin{proof}
Applying Lemma \ref{lemma8} to Lemma \ref{lemma6}, we have
    $$
    \sum_{k=0}^K \left\|x^k-1_n \otimes \bar{x}^k\right\|^2 \leq \frac{4 \alpha^2 \rho^2}{\left(1-\rho^2\right)^2} \sum_{k=1}^K \left\|g^k-1_n \otimes \bar{g}^k\right\|^2.
    $$
    Moreover, plugging \eqref{10.1} into the above equation gives that
    \begin{align*}
    & \sum_{k=0}^K\left\|x^k-1_n \otimes \bar{x}^k\right\|^2 \\
    \leq &\; \frac{4 \alpha^2 \rho^2}{\left(1-\rho^2\right)^2} \sum_{k=1}^K\left\|g^k-1_n \otimes \bar{g}^{k}\right\|^2 \\
    \leq &\; \frac{16 \alpha^2 \rho^2}{\left(1-\rho^2\right)^3} \mathbb{E}\left\|g^1-1_n \otimes \bar{g}^1\right\|^2+\frac{288 \alpha^2 \beta \rho^4}{\left(1-\rho^2\right)^4} \mathbb{E}\left\|m^0-\nabla f\left(x^0\right)\right\|^2 \\
    & +\frac{40208 \alpha^2 d L^2 \rho^4}{\left(1-\rho^2\right)^4} \sum_{k=0}^K \mathbb{E}\left\|x^k-1_n \otimes \bar{x}^k\right\|^2+\frac{14800 \alpha^4 n d L^2 \rho^4}{\left(1-\rho^2\right)^4} \sum_{k=0}^{K-2} \mathbb{E}\left\|\bar{m}^k\right\|^2 \\
    & +\frac{11300 \alpha^2 \beta^2 n d \rho^4\left(1+\sigma_0^2\right)\left(1+\sigma_2^2\right)}{\left(1-\rho^2\right)^4} \sum_{k=0}^{K-2} \mathbb{E}\left\|\nabla f\left(\bar{x}^k\right)\right\|^2+\frac{4832 \alpha^2 n d^2 L^2 \rho^4}{\left(1-\rho^2\right)^4} \sum_{k=0}^{K-1} t_k^2 \\
    & +\left(\frac{864 \alpha^2 \beta^2 n d \rho^4}{\left(1-\rho^2\right)^3}+\frac{10400 \alpha^2 \beta^3 n d \rho^4}{\left(1-\rho^2\right)^4}\right) K\left(1+\sigma_0^2\right) \sigma_3^2+\left(\frac{192 \alpha^2 \beta^2 n d \rho^4}{\left(1-\rho^2\right)^3}+\frac{2304 \alpha^2 \beta^3 n d \rho^4}{\left(1-\rho^2\right)^4}\right) K \sigma_1^2.
    \end{align*}
    It is easy to see that $1-\frac{40208 \alpha^2 d L^2 \rho^4}{\left(1-\rho^2\right)^4} \ge \frac{1}{2}$ provided $\alpha \leq \frac{\left(1-\rho^2\right)^2}{284 \sqrt{d} L \rho^2}$.
    Thus we have
    \begin{align*}
    & \sum_{k=0}^K\left\|x^k-1_n \otimes \bar{x}^k\right\|^2 \\
    \leq &\; \frac{32 \alpha^2 \rho^2}{\left(1-\rho^2\right)^3} \mathbb{E}\left\|g^1-1_n \otimes \bar{g}^1\right\|^2+\frac{576 \alpha^2 \beta \rho^4}{\left(1-\rho^2\right)^4} \mathbb{E}\left\|m^0-\nabla f\left(x^0\right)\right\|^2 +\frac{29600 \alpha^4 n d L^2 \rho^4}{\left(1-\rho^2\right)^4} \sum_{k=0}^{K-2} \mathbb{E}\left\|\bar{m}^k\right\|^2 \\
    & +\frac{22600 \alpha^2 \beta^2 n d \rho^4\left(1+\sigma_0^2\right)\left(1+\sigma_2^2\right)}{\left(1-\rho^2\right)^4} \sum_{k=0}^{K-2} \mathbb{E}\left\|\nabla f\left(\bar{x}^k\right)\right\|^2 +\frac{10000 \alpha^2 n d^2 L^2 \rho^4}{\left(1-\rho^2\right)^4} \sum_{k=0}^{K-1} t_k^2 \\
    & +\left(\frac{1800 \alpha^2 \beta^2 n d \rho^4}{\left(1-\rho^2\right)^3}+\frac{20800 \alpha^2 \beta^3 n d \rho^4}{\left(1-\rho^2\right)^4}\right) K\left(1+\sigma_0^2\right) \sigma_3^2+\left(\frac{400 \alpha^2 \beta^2 n d \rho^4}{\left(1-\rho^2\right)^3}+\frac{4700 \alpha^2 \beta^3 n d \rho^4}{\left(1-\rho^2\right)^4}\right) K \sigma_1^2,
    \end{align*}
    which completes the proof.
\end{proof}

\begin{lemma}
\label{lemma12}
Consider the gradient estimation $\bar{m}^0$ generated by Algorithm \ref{algo}. Under Assumption \ref{ass2} and Assumption \ref{ass3}, one has
\begin{align*}
\mathbb{E}\left\|\bar{m}^0-\nabla \bar{f}^0\right\|^2
& \leq \frac{24 d\left(1+\sigma_0^2\right)+6}{n b_0}\left\|\nabla f\left(x^0\right)\right\|^2 +\frac{24 d \sigma_1^2}{b_0}+\frac{3 d^2 L^2 }{b_0}t_0^2 +\frac{6L^2}{b_0} t_0^2 +2 L^2 t_0^2 . 
\end{align*}
\end{lemma}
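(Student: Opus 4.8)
The plan is a bias–variance decomposition that exploits the mini-batch structure of the initialization. By construction $\bar{m}^0 = \frac{1}{b_0}\sum_{s=1}^{b_0}\hat{\nabla}\bar{f}(x^0;u_s^0,\xi_s^0)$ is an average of $b_0$ independent copies of the averaged two-point estimator, where the samples $\{(u_{i,s}^0,\xi_{i,s}^0)\}$ are drawn independently across both $i$ and $s$. Applying Lemma \ref{lemma1}(2) first over $u$ to each $F_i(\cdot\,;\xi_i)$ and then taking expectation over $\xi_i$, every copy has mean $\nabla\bar{f}_{t_0}^0 := \frac1n\sum_{i=1}^n\nabla f_{i,t_0}(x_i^0)$, so $\mathbb{E}[\bar{m}^0]=\nabla\bar{f}_{t_0}^0$. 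Since $x^0$ is deterministic at initialization, I would first write the orthogonal decomposition
$$
\mathbb{E}\left\|\bar{m}^0-\nabla\bar{f}^0\right\|^2 = \mathbb{E}\left\|\bar{m}^0-\nabla\bar{f}_{t_0}^0\right\|^2 + \left\|\nabla\bar{f}_{t_0}^0-\nabla\bar{f}^0\right\|^2,
$$
which is exact because the first difference is zero-mean and the second is deterministic, so the cross term vanishes.

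For the smoothing bias, Jensen's inequality followed by Lemma \ref{lemma1}(2) gives $\left\|\nabla\bar{f}_{t_0}^0-\nabla\bar{f}^0\right\|^2 \le \frac1n\sum_{i=1}^n\left\|\nabla f_{i,t_0}(x_i^0)-\nabla f_i(x_i^0)\right\|^2 \le L^2 t_0^2$, which produces the $t_0^2$ terms carrying neither $d$ nor $b_0$. For the variance, independence of the $b_0$ batches yields the crucial $1/b_0$ reduction, namely $\mathbb{E}\|\bar{m}^0-\nabla\bar{f}_{t_0}^0\|^2 = \frac{1}{b_0}\mathbb{E}\|\hat{\nabla}\bar{f}(x^0;u^0,\xi^0)-\nabla\bar{f}_{t_0}^0\|^2 \le \frac{1}{b_0}\mathbb{E}\|\hat{\nabla}\bar{f}(x^0;u^0,\xi^0)\|^2$, the last inequality bounding the variance by the second moment. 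I would then estimate this second moment exactly as in Lemma \ref{lemma2} (equivalently, node by node via the computation \eqref{eq tmp2} behind Lemma \ref{lemma3}), but stopping \emph{before} invoking Assumption \ref{ass4}, so that the bound stays in terms of $\sum_i\|\nabla f_i(x_i^0)\|^2$ and $\sigma_1^2$ rather than $\|\nabla f(\bar{x}^0)\|^2$ and $\sigma_3^2$; together with Assumption \ref{ass3} this supplies the $\tfrac12 d^2L^2t_0^2$, the $d(1+\sigma_0^2)\|\nabla f(x^0)\|^2$, and the $d\sigma_1^2$ contributions, each divided by $b_0$.

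Finally I would collect the three groups of terms and use $n\ge 1$, $b_0\ge 1$ to absorb the tighter constants into the stated (loose) ones. The only genuinely delicate point is the variance step: one must recognize that $\bar{m}^0$ is centered at the smoothed gradient $\nabla\bar{f}_{t_0}^0$ rather than at $\nabla\bar{f}^0$, so that the $\mathcal{O}(L^2 t_0^2)$ smoothing bias is isolated cleanly, and one must use independence across the $b_0$ samples to extract the $1/b_0$ factor that is the entire purpose of the large initial batch. Everything else reduces to the second-moment estimate of the two-point estimator already established for Lemmas \ref{lemma2} and \ref{lemma3}.
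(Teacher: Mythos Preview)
Your proposal is correct and rests on the same mechanism as the paper: isolate the smoothing bias $\nabla\bar f_{t_0}^0-\nabla\bar f^0$, extract the $1/b_0$ factor from independence of the mini-batch samples, and bound the remaining second moment via the two-point estimator calculations already carried out for Lemmas~\ref{lemma2}--\ref{lemma3} (stopping before Assumption~\ref{ass4}). The organizational difference is that the paper first applies Jensen over nodes, $\mathbb{E}\|\bar m^0-\nabla\bar f^0\|^2\le\frac1n\sum_i\mathbb{E}\|m_i^0-\nabla f_i(x_i^0)\|^2$, and then performs a node-wise split $\|m_i^0-\nabla f_i\|^2\le 2\|m_i^0-\nabla f_{i,t_0}\|^2+2\|\nabla f_{i,t_0}-\nabla f_i\|^2$ with a Young-type factor of~$2$, whereas you work directly on the network-averaged estimator and use the exact orthogonal decomposition (zero-mean plus deterministic, no factor of~$2$). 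Your route is marginally tighter---it even picks up an extra $1/n$ on the $\sigma_1^2$ term through the cross-node independence built into Lemma~\ref{lemma2}'s computation---but since you absorb the sharper constants into the stated loose ones, both approaches deliver the lemma as written.
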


\begin{proof}
Recall that
\begin{align*}
    \bar{m}^0 = \frac{1}{n}\sum_{i=1}^n m_i^0 = \frac{1}{n}\sum_{i=1}^n \left(\frac{1}{b_0}\sum_{s=1}^{b_0}\hat{\nabla} f_i(x_i^0; u_{i,s}^0, \xi_{i,s}^0)\right) 
\end{align*}
and $\mathbb{E}_{u_{i,s}^0, \xi_{i,s}^0}\left[ \hat{\nabla} f_i(x_i^0; u_{i,s}^0, \xi_{i,s}^0) \right] = \nabla f_{i,t_0}(x_i^0)$. Then a direct computation yields that
\begin{align*}
& \mathbb{E}\left\|\bar{m}^0-\nabla \bar{f}^0\right\|^2\\
= &\; \mathbb{E}\left\|\frac{1}{n} \sum_{i=1}^n \left(m_i^0-\nabla f_i\left(x_i^0\right) \right)\right\|^2 \\
\leq &\; \frac{1}{n} \sum_{i=1}^n \mathbb{E}\left\|m_i^0-\nabla f_i\left(x_i^0\right)\right\|^2 \\
= &\; \frac{1}{n} \sum_{i=1}^n \mathbb{E}\left\|\frac{1}{b_0} \sum_{s=1}^{b_0} \hat{\nabla} f_i\left(x_i^0 ; u_{i, s}^0, \xi_{i, s}^0\right)-\nabla f_i\left(x_i^0\right)\right\|^2 \\
\leq &\; \frac{2}{n} \sum_{i=1}^n \mathbb{E}\left\|\frac{1}{b_0} \sum_{s=1}^{b_0} \hat{\nabla} f_i\left(x_i^0 ; u_{i, s}^0, \xi_{i, s}^0\right)-\nabla f_{i,t_0}\left(x_i^0\right)\right\|^2 + \frac{2}{n} \sum_{i=1}^n \left\|\nabla f_{i,t_0}\left(x_i^0\right) - \nabla f_{i}(x_i^0)\right\|^2 \\
\stackrel{(a)}{\leq} &\;\frac{2}{n} \sum_{i=1}^n \mathbb{E}\left\|\frac{1}{b_0} \sum_{s=1}^{b_0} \hat{\nabla} f_i\left(x_i^0 ; u_{i, s}^0, \xi_{i, s}^0\right)-\nabla f_{i,t_0}\left(x_i^0\right)\right\|^2 + 2t_0^2L^2\\
\stackrel{(b)}{=}&\;\frac{2}{n} \sum_{i=1}^n \frac{1}{b_0^2} \sum_{s=1}^{b_0} \mathbb{E}\left\|\hat{\nabla} f_i\left(x_i^0 ; u_{i, s}^0, \xi_{i, s}^0\right)-\nabla f_{i,t_0}\left(x_i^0\right)\right\|^2 + 2t_0^2L^2\\
\leq &\; \frac{6}{n b_0^2} \sum_{i=1}^n \sum_{s=1}^{b_0} \mathbb{E}\left\|\hat{\nabla} f_i\left(x_i^0 ; u_{i, s}^0, \xi_{i, s}^0\right)\right\|^2+\frac{6}{n b_0^2} \sum_{i=1}^n \sum_{s=1}^{b_0} \mathbb{E}\left\|\nabla f_{i, t_0}\left(x_i^0\right)-\nabla f_i\left(x_i^0\right)\right\|^2 \\
& +\frac{6}{n b_0^2} \sum_{i=1}^n \sum_{S=1}^{b_0}\left\|\nabla f_i\left(x_i^0\right)\right\|^2+2 L^2 t_0^2 \\
\stackrel{(c)}{\leq}&\; \frac{6}{n b_0^2} \sum_{i=1}^n \sum_{s=1}^{b_0}\left(4 d\left(1+\sigma_0^2\right)\left\|\nabla f_i\left(x_i^0\right)\right\|^2+\frac{1}{2} d^2 L^2 t_0^2+4 d \sigma_1^2\right)\\
&\;+\frac{6}{b_0} L^2 t_0^2 +\frac{6}{n b_0}\left\|\nabla f\left(x^0\right)\right\|^2+2 L^2 t_0^2\\
= &\; \frac{24 d\left(1+\sigma_0^2\right)+6}{n b_0}\left\|\nabla f\left(x^0\right)\right\|^2 +\frac{24 d \sigma_1^2}{b_0}+\frac{3 d^2 L^2 }{b_0}t_0^2 +\frac{6L^2}{b_0} t_0^2 +2 L^2 t_0^2,
\end{align*}
where step (a) is due to Lemma \ref{lemma1}, step (b) uses the fact that 
\begin{align*}
    \mathbb{E}\left\langle \hat{\nabla} f_i\left(x_i^0 ; u_{i, s}^0, \xi_{i, s}^0\right)-\nabla f_{i,t_0}\left(x_i^0\right), \hat{\nabla} f_i\left(x_i^0 ; u_{i, s'}^0, \xi_{i, s'}^0\right)-\nabla f_{i,t_0}\left(x_i^0\right)\right\rangle = 0, 
\end{align*}
and step (c) follows from the fact that 
\begin{align*}
    \mathbb{E}_{u_i, \xi_i}\left\|\hat{\nabla} f_i\left(x_i^0 ; u_i, \xi_i\right)\right\|^2 \leq 4 d\left(1+\sigma_0^2\right)\left\|\nabla f_i\left(x_i^0\right)\right\|^2+\frac{1}{2} d^2 L_t^2 t_0^2+4 d \sigma_1^2.
\end{align*}
Moreover, the fact used in step (c) can be proved as follows:
\begin{align*}
& \mathbb{E}_{u_i, \xi_i}\left\|\hat{\nabla} f_i\left(x_i^0 ; u_i, \xi_i\right)\right\|^2 \\
= &\; E_{\xi_i} \mathbb{E}_{u_i}\left\|d \frac{F_i\left(x_i^0+t_0 u_i ; \xi_i\right)-F_i\left(x_i^0-t_0 u_i ; \xi_i\right)}{2 t_0} u_i\right\|^2 \\
\leq &\; 2 \mathbb{E}_{\xi_i} \mathbb{E}_{u_i}\left\|d \frac{F_i\left(x_i^0+t_0 u_i ; \xi_i\right)-F_i\left(x_i^0-t_0 u_i ; \xi_i\right)}{2 t_0} u_i-d \left\langle \nabla F_i\left(x_i^0 ; \xi_i\right), u_i \right\rangle u_i\right\|^2 \\
& +2 \mathbb{E}_{\xi_i} \mathbb{E}_{u_i}\left\|d \left\langle \nabla F_i\left(x_i^0 ; \xi_i\right), u_i \right\rangle u_i\right\|^2 \\
\leq &\; \frac{1}{2} d^2 L^2 t_0^2+2 d \mathbb{E}_{\xi_i}\left\|\nabla F_i\left(x_i^0 ; \xi_i\right)\right\|^2 \\
\leq &\; \frac{1}{2} d^2 L^2 t_0^2+4 d \mathbb{E}_{\xi_i}\left\|\nabla F_i\left(x_i^0 ; \xi_i\right)-\nabla f_i\left(x_i^0\right)\right\|^2+4 d\left\|\nabla f_i\left(x_i^0\right)\right\|^2 \\
\leq &\; \frac{1}{2} d^2 L^2 t_0^2+4 d\left(\sigma_0^2\left\|\nabla f_i\left(x_i^0\right)\right\|^2+\sigma_1^2\right)+4 d\left\|\nabla f_i\left(x_i^0\right)\right\|^2 \\
= &\; 4 d\left(1+\sigma_0^2\right)\left\|\nabla f_i\left(x_i^0\right)\right\|^2+\frac{1}{2} d^2 L_t^2 t_0^2+4 d \sigma_1^2,
\end{align*}
where the second inequality is due to Lemma \ref{lemma1} and the last inequality follows from Assumption \ref{ass3}. Thus we complete the proof.
\end{proof}

\section{Proof of Theorem \ref{thm}}
\label{appendix_b}
For ease of notation, define
\begin{align*}
    c_0 & = \frac{L^2}{n}+\frac{32 d L^2\left(1+\sigma_0^2\right)}{n^2}+\frac{48 L^2}{n},\\
    c_1 & = \frac{64 d\left(1+\sigma_0^2\right)\left(1+\sigma_2^2\right)}{n}+48+\frac{3277000 \alpha^2 d^2 L^2 \rho^4\left(1+\sigma_0^2\right)\left(1+\sigma_2^2\right)}{\left(1-\rho^2\right)^4},\\
    c_2 & = \frac{72 \alpha^2 d L^2}{n}+\frac{4292000 \alpha^4 d^2 L^4 \rho^4}{\left(1-\rho^2\right)^4},\\
    c_3 & = \frac{72 d L^2}{n}+\frac{1073000 d^2 L^2 \rho^4}{\left(1-\rho^2\right)^4},\\
    c_4 & = \frac{1}{\sqrt{54\left(1-\rho^2\right)\left(1+\sigma_0^2\right)}}.
\end{align*}

Since $f(\cdot)$ is $L$-smooth, we have
\begin{align*}
f\left(\bar{x}^{k+1}\right) & \leq f\left(\bar{x}^k\right)+\left\langle\nabla f\left(\bar{x}^k\right), \bar{x}^{k+1}-\bar{x}^k\right\rangle+\frac{L}{2}\left\|\bar{x}^{k+1}-\bar{x}^k\right\|^2 \\
& =f\left(\bar{x}^k\right)-\alpha\left\langle\nabla f\left(\bar{x}^k\right), \bar{m}^k \right\rangle+\frac{\alpha^2 L}{2}\left\|\bar{m}^k\right\|^2 \\
& =f\left(\bar{x}^k\right)-\alpha\left(\frac{1}{2}\left\|\nabla f\left(\bar{x}^k\right)\right\|^2+\frac{1}{2}\left\|\bar{m}^k\right\|^2-\frac{1}{2}\left\|\bar{m}^k-\nabla f\left(\bar{x}^k\right)\right\|^2\right)+\frac{\alpha^2 L}{2}\left\|\bar{m}^k\right\|^2 \\
& =f\left(\bar{x}^k\right)-\frac{\alpha}{2}\left\|\nabla f\left(\bar{x}^k\right)\right\|^2-\left(\frac{\alpha}{2}-\frac{\alpha^2 L}{2}\right)\left\|\bar{m}^k\right\|^2+\frac{\alpha}{2}\left\|\bar{m}^k-\nabla f\left(\bar{x}^k\right)\right\|^2 \\
& \leq f\left(\bar{x}^k\right)-\frac{\alpha}{2}\left\|\nabla f\left(\bar{x}^k\right)\right\|^2-\left(\frac{\alpha}{2}-\frac{\alpha^2 L}{2}\right)\left\|\bar{m}^k\right\|^2+\alpha\left\|\bar{m}^k-\nabla \bar{f}^k\right\|^2+\alpha\left\|\nabla \bar{f}^k-\nabla f\left(\bar{x}^{k}\right)\right\|^2 \\
& \leq f\left(\bar{x}^k\right)-\frac{\alpha}{2}\left\|\nabla f\left(\bar{x}^k\right)\right\|^2-\left(\frac{\alpha}{2}-\frac{\alpha^2 L}{2}\right)\left\|\bar{m}^k\right\|^2+\alpha\left\|\bar{m}^k-\nabla \bar{f}^k\right\|^2+\frac{\alpha L^2}{n}\left\|x^k-1_n \otimes \bar{x}^k\right\|^2,
\end{align*}
where the second line is due to the fact $\bar{x}^{k+1} = \bar{x}^k - \alpha \bar{m}^k$.
Then a simple calculation yields that
\begin{align*}
0 \leq &\; \mathbb{E} f\left(\bar{x}^{K+1}\right)-f^* \\
\leq &\; \mathbb{E} f\left(\bar{x}^K\right)-f^*-\frac{\alpha}{2} \mathbb{E}\left\|\nabla f\left(\bar{x}^K\right)\right\|^2-\left(\frac{\alpha}{2}-\frac{\alpha^2 L}{2}\right) \mathbb{E}\left\|\bar{m}^K\right\|^2\\
& +\alpha \mathbb{E}\left\|\bar{m}^K-\nabla \bar{f}^K\right\|^2+\frac{\alpha L^2}{n} \mathbb{E}\left\|x^K-1_n \otimes \bar{x}^K\right\|^2 \\
\leq &\; f\left(\bar{x}^0\right)-f^*-\frac{\alpha}{2} \sum_{k=0}^K \mathbb{E}\left\|\nabla f\left(\bar{x}^k\right)\right\|^2-\left(\frac{\alpha}{2}-\frac{\alpha^2 L}{2}\right) \sum_{k=0}^K\mathbb{E}\left\|\bar{m}^k\right\|^2 \\
& +\alpha \sum_{k=0}^K \mathbb{E}\left\|\bar{m}^{k}-\nabla \bar{f}^k\right\|^2+\frac{\alpha L^2}{n} \sum_{k=0}^K\mathbb{E}\left\|x^k-1_n \otimes \bar{x}^k\right\|^2. \numberthis\label{eq1}
\end{align*}
Substituting \eqref{9.1} into \eqref{eq1}, we get
\begin{align*}
0 \leq &\; f\left(\bar{x}^0\right)-f^*-\left(\frac{\alpha}{2}-\frac{64 \alpha \beta d}{n}\left(1+\sigma_0^2\right)\left(1+\sigma_2^2\right)-48 \alpha \beta\right) \sum_{k=0}^K \mathbb{E}\left\|\nabla f\left(\bar{x}^k\right)\right\|^2 \\
& -\left(\frac{\alpha}{2}-\frac{\alpha^2 L}{2}-\frac{72 \alpha^3(1-\beta)^2 d L^2}{n\beta}\right) \sum_{k=0}^K \mathbb{E}\left\|\bar{m}^k\right\|^2 +\frac{2 \alpha}{\beta} \mathbb{E}\left\|\bar{m}^0-\nabla \bar{f}^0\right\|^2 \\
& +\left(\frac{\alpha L^2}{n}+\frac{32 \alpha \beta d L^2\left(1+\sigma_0^2\right)}{n^2}+\frac{48 \alpha \beta L^2}{n}+\frac{144 \alpha(1-\beta)^2 d L^2}{n^2 \beta}\right) \sum_{k=0}^K \mathbb{E}\left\|x^k-1_n \otimes \bar{x}^k\right\|^2 \\
& +\left(4 \alpha \beta L^2 d^2+\frac{32\alpha(1-\beta)^2 L^2}{\beta}+\frac{12\alpha(1-\beta)^2 d^2 L^2}{n \beta}+\frac{4 \alpha(1-\beta)^2(2-\beta) L^2}{\beta^2}\right) \sum_{k=0}^K t_k^2 \\
& +\frac{16 \alpha \beta d}{n} K \sigma_1^2+\frac{64 \alpha \beta d}{n} K\left(1+\sigma_0^2\right) \sigma_3^2 \\
\leq &\; f\left(\bar{x}^0\right)-f^*-\left(\frac{\alpha}{2}-\frac{64 \alpha \beta d}{n}\left(1+\sigma_0^2\right)\left(1+\sigma_2^2\right)-48 \alpha \beta\right) \sum_{k=0}^K \mathbb{E}\left\|\nabla f\left(\bar{x}^k\right)\right\|^2 \\
& -\left(\frac{\alpha}{4}-\frac{72 \alpha^3(1-\beta)^2 d L^2}{n\beta}\right) \sum_{k=0}^K \mathbb{E}\left\|\bar{m}^k\right\|^2 +\frac{2 \alpha}{\beta} \mathbb{E}\left\|\bar{m}^0-\nabla \bar{f}^0\right\|^2 \\
& +\frac{145 \alpha d L^2}{n \beta} \sum_{k=0}^K \mathbb{E}\left\|x^k-1_n \otimes \bar{x}^k\right\|^2 +\frac{52 \alpha L^2 d^2}{\beta^2} \sum_{k=0}^K t_k^2+\frac{16 \alpha \beta d}{n} K \sigma_1^2+\frac{64 \alpha \beta d}{n} K\left(1+\sigma_0^2\right) \sigma_3^2, \numberthis\label{eq2}
\end{align*}
where the last line is due to $\alpha \leq \frac{1}{2L}$ and $\beta \leq \frac{L^2 d}{nc_0}$. Plugging \eqref{11.1} into \eqref{eq2} yields that
\begin{align}
0 \leq & f\left(\bar{x}^0\right)-f^*\notag\\
&-\left(\frac{\alpha}{2}-\frac{64 \alpha \beta d}{n}\left(1+\sigma_0^2\right)\left(1+\sigma_2^2\right)-48 \alpha \beta-\frac{3277000 \alpha^3 \beta d^2 L^2 \rho^4\left(1+\sigma_0^2\right)\left(1+\sigma_2^2\right)}{\left(1-\rho^2\right)^4}\right) \sum_{k=0}^K \mathbb{E}\left\|\nabla f\left(\bar{x}^k\right)\right\|^2 \notag\\
& -\left(\frac{\alpha}{4}-\frac{72 \alpha^3(1-\beta)^2 d L^2}{n\beta}-\frac{4292000 \alpha^5 d^2 L^4 \rho^4}{\beta\left(1-\rho^2\right)^4}\right) \sum_{k=0}^K E\left\|\bar{m}^k\right\|^2 \notag\\
& +\frac{2 \alpha}{\beta} \mathbb{E}\left\|\bar{m}^0-\nabla \bar{f}^0\right\|^2+\frac{4640 \alpha^3 d L^2 \rho^2}{\beta n\left(1-\rho^2\right)^3} E\left\|g^1-1_n \otimes g^{1}\right\|^2+\frac{83520 \alpha^3 d L^2 \rho^4}{n\left(1-\rho^2\right)^4} \mathbb{E}\left\|m^0-\nabla f\left(x^0\right)\right\|^2 \notag\\
& +\left(\frac{52 \alpha L^2 d^2}{\beta^2}+\frac{1450000 \alpha^3 d^3 L^4 \rho^4}{\beta\left(1-\rho^2\right)^4}\right) \sum_{k=0}^K t_k^2 \notag\\
& +\left(\frac{64 \alpha d}{n}+\frac{261000 \alpha^3 \beta d^2 L^2 \rho^4}{\left(1-\rho^2\right)^3}+\frac{3016000 \alpha^3 \beta^2 d^2 L^2 \rho^4}{\left(1-\rho^2\right)^4}\right) K\left(1+\sigma_0^2\right) \sigma_3^2\notag \\
& +\left(\frac{16 \alpha \beta d}{n}+\frac{58000 \alpha^3 \beta d^2 L^2 \rho^4}{\left(1-\rho^2\right)^3}+\frac{681500 \alpha^3 \beta^2 d^2 L^2 \rho^4}{\left(1-\rho^2\right)^4}\right) K \sigma_1^2.\label{appb 25}
\end{align}

\noindent Given the definitions of $c_1$ and $c_2$, and considering the assumption that $4c_2 \leq \beta \leq \min\{1, 1/(4c_1)\}$, we have
\begin{align*}
\dfrac{64 \alpha \beta d \left(1+\sigma_0^2\right)\left(1+\sigma_2^2\right)}{n} + 48 \alpha \beta + \dfrac{3277000 \alpha^3 \beta d^2 L^2 \rho^4\left(1+\sigma_0^2\right)\left(1+\sigma_2^2\right)}{\left(1-\rho^2\right)^4} &= \alpha \beta c_1\leq \dfrac{\alpha}{4},\\
\dfrac{72 \alpha^3(1-\beta)^2 d L^2}{n \beta} + \dfrac{4292000 \alpha^5 d^2 L^4 \rho^4}{\beta\left(1-\rho^2\right)^4} &\leq \frac{\alpha c_2}{\beta }\leq \dfrac{\alpha}{4}.
\end{align*}
Using these relations, \eqref{appb 25} can be simplified as follows:
\begin{align}
\frac{1}{K} \sum_{k=0}^{K-1} \mathbb{E} \left\|\nabla f\left(\bar{x}^k\right)\right\|^2 &
\leq\frac{4\left(f\left(\bar{x}^0\right)-f^*\right)}{\alpha K}+\frac{8}{\beta K} \mathbb{E}\left\|\bar{m}^0-\nabla \bar{f}^0\right\|^2 \notag\\
& +\frac{18560 \alpha^2 d L^2 \rho^2}{K \beta n\left(1-\rho^2\right)^3} \mathbb{E}\left\|g^1-1_n \otimes \bar{g}^{1}\right\|^2+\frac{335000 \alpha^2 d L^2 \rho^4}{K n\left(1-\rho^2\right)^4} \mathbb{E}\left\|m^0-\nabla f\left(x^0\right)\right\|^2 \notag\\
& +\frac{1}{K}\left(\frac{208 L^2 d^2}{\beta^2}+\frac{5800000 \alpha^2 d^3 L^4 \rho^4}{\beta\left(1-\rho^2\right)^4}\right) \sum_{k=0}^{K-1} t_k^2 \notag\\
& +\left(\frac{256 \beta d}{n}+\frac{1044000 \alpha^2 \beta d^2 L^2 \rho^4}{\left(1-\rho^2\right)^3}+\frac{12064000 \alpha^2 \beta^2 d^2 L^2 \rho^4}{\left(1-\rho^2\right)^4}\right)\left(1+\sigma_0^2\right) \sigma_3^2 \notag \\
& +\left(\frac{64 \beta d}{n}+\frac{232000 \alpha^2 \beta d^2 L^2 \rho^4}{\left(1-\rho^2\right)^3}+\frac{2726000 \alpha^2 \beta^2 d^2 L^2 \rho^4}{\left(1-\rho^2\right)^4}\right) \sigma_1^2.\label{eq 26}
\end{align}
Moreover, it can be seen that
\begin{align*}
\mathbb{E}\left\|m^0-\nabla f\left(x^0\right)\right\|^2 & =\sum_{i=1}^n \mathbb{E}\left\|m_i^0-\nabla f_i\left(x_i^0\right)\right\|^2 \\
&=n \mathbb{E}\left\|\bar{m}^0-\nabla \bar{f}^0\right\|^2 \\
& \leq \frac{24 d\left(1+\sigma_0^2\right)+6}{b_0}\left\|\nabla f\left(x^0\right)\right\|^2+\frac{24 n d \sigma_1^2}{b_0} +\frac{3 n d^2 L^2 }{b_0} t_0^2+\frac{6 n L^2}{b_0} t_0^2 +2 n L^2 t_0^2:=e_m,\\
\mathbb{E}\left\|g^1-1_n \otimes \bar{g}^{1}\right\|^2 & =\mathbb{E}\left\|\left(W-\frac{1_n 1_n^T}{n}\right) \otimes I_d\left(g^0+m^0-m^{-1}\right)\right\|^2 \\
& \leq \rho^2 \mathbb{E}\left\|m^0\right\|^2 \\
& \leq 2 \rho^2 \mathbb{E}\left\|m^0-\nabla f\left(x^0\right)\right\|^2+2 \rho^2\left\|\nabla f\left(x^0\right)\right\|^2\\
&=2\rho^2 e_m+2\rho^2\left\|\nabla f\left(x^0\right)\right\|^2:=e_g,
\end{align*}
where the first inequality follows from Lemma \ref{lemma12}. Applying these relationships and Lemma \ref{lemma12}, we finally have
\begin{align*}
\frac{1}{K} \sum_{k=0}^{K-1} \mathbb{E}\left\|\nabla f\left(\bar{x}^k\right)\right\|^2 \leq & \frac{4\left(f\left(\bar{x}^0\right)-f^*\right)}{\alpha K}+ \frac{\left(192 d\left(1+\sigma_0^2\right)+48\right)\left\|\nabla f\left(x^0\right)\right\|^2}{n b_0 \beta K}+\frac{192 d \sigma_1^2}{b_0 \beta K}\\
& +\left(\frac{24 d^2 L^2}{b_0 \beta K}+\frac{48 L^2}{b_0 \beta K}+\frac{16 L^2}{\beta K}\right) t_0^2 +\frac{18560 \alpha^2 d L^2 \rho^2}{K \beta n\left(1-\rho^2\right)^3} e_g +\frac{335000 \alpha^2 d L^2 \rho^4}{K n\left(1-\rho^2\right)^4} e_m \\
& +\frac{1}{K}\left(\frac{208 L^2 d^2}{\beta^2}+\frac{5800000 \alpha^2 d^3 L^4 \rho^4}{\beta\left(1-\rho^2\right)^4}\right) \sum_{k=0}^{K-1} t_k^2 \\
& +\left(\frac{256 \beta d}{n}+\frac{1044000 \alpha^2 \beta d^2 L^2 \rho^4}{\left(1-\rho^2\right)^3}+\frac{12064000 \alpha^2 \beta^2 d^2 L^2 \rho^4}{\left(1-\rho^2\right)^4}\right)\left(1+\sigma_0^2\right) \sigma_3^2 \\
& +\left(\frac{64 \beta d}{n}+\frac{232000 \alpha^2 \beta d^2 L^2 \rho^4}{\left(1-\rho^2\right)^3}+\frac{2726000 \alpha^2 \beta^2 d^2 L^2 \rho^4}{\left(1-\rho^2\right)^4}\right) \sigma_1^2.
\end{align*}
The proof of Theorem \ref{thm} is complete after selecting $\{t_k\}_{k=0,\cdots,K}$ such that $ t_0\le \frac{\beta}{d^2}$ and $ \sum\limits_{k=0}^K t_k^2 \le \frac{\beta^2 M_t}{d^4}$.
\end{document}